\documentclass[11pt,reqno,a4paper]{amsart}

\usepackage{docmute}

\usepackage{graphicx}
\usepackage{midpage}
\usepackage[margin=25truemm]{geometry}
\usepackage{amsmath,amssymb,amsthm,amsfonts}
\usepackage{tikz-cd}
\usepackage{setspace}
\usepackage{stix2}
\usepackage[protrusion=true,expansion=true,final]{microtype}
\usepackage{hyperref}
\usepackage{lscape}
\usepackage{enumitem}
\usepackage{caption}
\usepackage{array}
\usepackage{makecell}

\makeatletter
\@ifundefined{showhyphens}{}{}
\makeatother

\numberwithin{equation}{subsection}
\usetikzlibrary{calc}

\hypersetup{
setpagesize=false,
 bookmarksnumbered=true,%
 bookmarksopen=true,%
 colorlinks=true,%
 linkcolor=red,
 citecolor=blue,
}

\let\it\relax

\newcommand{\it}[1]{\textit{#1}}

\newcommand{\op}{^\mathrm{op}}

\DeclareMathOperator{\gldim}{\mathrm{gl.dim}}
\DeclareMathOperator{\domdim}{\mathrm{dom.dim}}
\DeclareMathOperator{\codomdim}{\mathrm{codom.dim}}
\DeclareMathOperator{\pd}{\mathrm{pd}}
\DeclareMathOperator{\id}{\mathrm{id}}

\let\top\relax
\DeclareMathOperator{\top}{\mathrm{top}}
\DeclareMathOperator{\rad}{\mathrm{rad}}
\DeclareMathOperator{\soc}{\mathrm{soc}}
\DeclareMathOperator{\corad}{\mathrm{corad}}

\let\hom\relax

\newcommand{\hom}[2]{{}_{#1}(#2)}

\newcommand{\rhom}[2]{\mathbb{R}\mathrm{Hom}_{#1}(#2)}
\newcommand{\rend}[2]{\mathbb{R}\mathrm{End}_{#1}(#2)}
\newcommand{\lox}{\otimes^{\mathbb{L}}}

\newcommand{\MA}{\mathcal{A}}

\newcommand{\MC}{\mathcal{C}}

\newcommand{\MI}{\mathcal{I}}
\newcommand{\MK}{\mathcal{K}}
\newcommand{\MS}{\mathcal{S}}
\newcommand{\MT}{\mathcal{T}}

\newcommand{\MP}{\mathcal{P}}

\newcommand{\MD}{\mathcal{D}}

\newcommand{\RK}{\mathrm{K}}
\newcommand{\RD}{\mathrm{D}}

\newcommand{\BE}{\mathbb{E}}

\newcommand{\tauleq}[1]{\tau^{\leq #1}}
\newcommand{\taugeq}[1]{\tau^{\geq #1}}

\newcommand{\dem}[1]{\mathcal{H}^d_{#1}}
\newcommand{\demdg}[1]{\mathscr{H}^d_{#1}}
\newcommand{\der}[1]{\mathcal{D}(#1)}

\newcommand{\htpy}[1]{\mathcal{K}(#1)}

\newcommand{\Ar}[3]{\ar[from=#1,to=#2,#3]}
\newcommand{\red}{\color{red}}
\newcommand{\tauAr}[3]{\ar[from={#1}, to={#2}, #3,color=gray, dash pattern=on 3pt off 4pt]}

\let\mod\relax

\DeclareMathOperator{\mod}{\mathrm{mod}}
\DeclareMathOperator{\pvd}{\mathrm{pvd}}
\DeclareMathOperator{\per}{\mathrm{per}}
\DeclareMathOperator{\Mod}{\mathrm{Mod}}

\DeclareMathOperator{\Ker}{\mathrm{Ker}}

\DeclareMathOperator{\Cok}{\mathrm{Cok}}

\DeclareMathOperator{\Cone}{\mathrm{Cone}}
\DeclareMathOperator{\Cocone}{\mathrm{Cocone}}

\DeclareMathOperator{\add}{\mathrm{add}}

\DeclareMathOperator{\End}{\mathrm{End}}

\theoremstyle{definition}

\newtheorem{theorem}{Theorem}[section]
\newtheorem*{theorem*}{Theorem}
\newtheorem{proposition}[theorem]{Proposition}
\newtheorem*{proposition*}{Proposition}

\newtheorem{lemma}[theorem]{Lemma}
\newtheorem{cor}[theorem]{Corollary}
\newtheorem*{cor*}{Corollary}

\newtheorem{definition}[theorem]{Definition}
\newtheorem*{definition*}{Definition}

\newtheorem{remark}[theorem]{Remark}
\newtheorem{example}[theorem]{Example}
\newtheorem{notation}[theorem]{Notation}

\newtheorem{fact}[theorem]{Fact}

\begin{document}

\title[Auslander correspondence for proper connective DG-algebras]{Auslander correspondence for proper connective DG-algebras \\via extended module categories}

\author{Nao Mochizuki}

\address{Graduate School of Mathematics, Nagoya University, Furocho, Chikusaku, Nagoya 464-8602, Japan}
\email{mochizuki.nao.n8@s.mail.nagoya-u.ac.jp}

\begin{abstract}
We establish a $d$-dimensional Auslander correspondence for $d$-truncated proper connective DG-algebras via $d$-extended module categories. A $d$-truncated proper connective DG-algebra $\Gamma$ is called \emph{Auslander} if its $d$-extended module category is a $d$-Auslander extriangulated category. Our main theorem gives a one-to-one correspondence: for any $d$-truncated proper connective DG-algebra $\Lambda$ whose $d$-extended module category has finitely many indecomposable objects, the endomorphism DG-algebra of an additive generator is Auslander, and conversely every Auslander $d$-truncated proper connective DG-algebra arises in this way.
\end{abstract}

\maketitle

\tableofcontents

\setcounter{section}{-1}

\section{Introduction}
The Auslander correspondence is a fundamental result in the representation theory of finite-dimensional algebras. It gives a homological characterization of finite representation type by relating an additive generator of the module category to the homological dimensions of its endomorphism algebra. The precise statement is as follows:
\begin{fact}[{\cite{aus1971}}]
There is a bijection between the following two classes:
\begin{enumerate}
    \item Morita-equivalence classes of finite-dimensional algebras $\Lambda$ of finite representation type.
    \item Morita-equivalence classes of finite-dimensional algebras $\Gamma$ with global dimension at most $2$ and dominant dimension at least $2$.
\end{enumerate}
The correspondence from (1) to (2) is given by taking the endomorphism algebra $\End_{\Lambda}M$ of an additive generator $M$ of $\mod \Lambda$ for a finite-dimensional algebra $\Lambda$ of finite representation type.
\end{fact}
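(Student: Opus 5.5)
We prove the classical Auslander correspondence by building maps in both directions and checking that they are mutually inverse up to Morita equivalence. The main tool is the functor $\hom{\Lambda}{M,-}\colon \mod\Lambda\to\mod\Gamma$, where $M$ is an additive generator of $\mod\Lambda$ and $\Gamma=\End_\Lambda M$. This functor identifies $\mod\Lambda=\add M$ with the category $\mathrm{proj}\,\Gamma$ of finitely generated projective $\Gamma$-modules.

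\emph{From (1) to (2).} Let $X$ be a finitely generated $\Gamma$-module with projective presentation $P_1\to P_0\to X\to 0$. By the identification above, the map $P_1\to P_0$ is $\hom{\Lambda}{M,f}$ for some morphism $f\colon M_1\to M_0$ in $\add M$. Put $K=\Ker f\in\mod\Lambda=\add M$. Since $\hom{\Lambda}{M,-}$ is left exact, we obtain an exact sequence
\[ 0\to \hom{\Lambda}{M,K}\to P_1\to P_0\to X\to 0 \]
whose first three terms are projective, so $\pd X\le 2$ and $\gldim\Gamma\le 2$. For the dominant dimension, take the injective envelope $0\to\Lambda\to I_0\to I_1$ of $\Lambda$ in $\mod\Lambda$. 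Applying $\hom{\Lambda}{M,-}$ gives $0\to\Gamma$-module $\hom{\Lambda}{M,\Lambda}\to\hom{\Lambda}{M,I_0}\to\hom{\Lambda}{M,I_1}$. Since $M$ is a generator, $\Gamma$ is a direct summand of $\hom{\Lambda}{M,M}$, which lies in $\add\hom{\Lambda}{M,\Lambda^n}$ up to the usual argument. More cleanly, resolve $M$ itself: take $0\to M\to I_0\to I_1$. The modules $\hom{\Lambda}{M,I}$ with $I$ injective are projective-injective $\Gamma$-modules, because $\hom{\Lambda}{M,I}\cong D\hom{\Lambda}{I',M}$-type duality holds via $\hom{\Lambda}{M,D\Lambda}\cong D M$ and $DM\cong D\hom{\Lambda}{\Lambda,M}$, which is an injective $\Gamma$-module since $\Lambda\in\add M$. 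So $0\to\Gamma\to\hom{\Lambda}{M,I_0}\to\hom{\Lambda}{M,I_1}$ shows $\domdim\Gamma\ge 2$. Replacing $M$ by another additive generator changes $\Gamma$ only up to Morita equivalence, and likewise for Morita-equivalent $\Lambda$.

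\emph{From (2) to (1).} Given $\Gamma$, let $Q$ be an additive generator of the projective-injective $\Gamma$-modules, and set $\Lambda=\End_\Gamma(Q)^{\mathrm{op}}$, or equivalently $e\Gamma e$ for the corresponding idempotent $e$. Then $\Gamma$ has the form $\End_{\Lambda}(\Gamma e)$. The condition $\domdim\Gamma\ge2$ is exactly the Morita--Tachikawa double centralizer condition: it makes $\Gamma\to\End_{\Lambda}(\Gamma e)$ an isomorphism, with $\Gamma e$ a generator-cogenerator of $\mod\Lambda$. The condition $\gldim\Gamma\le2$ then forces $\add(\Gamma e)=\mod\Lambda$: for $N\in\mod\Lambda$, take a presentation $M_1\to M_0\to DN$-dually, or more directly a left $\add M$-approximation copresentation $0\to N\to M^0\to M^1$. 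Applying $\hom{\Lambda}{M,-}$ yields a $\Gamma$-module $Y$ with $\pd Y\le 2$, which forces the kernel term $\hom{\Lambda}{M,N}$ to be projective, and hence $N\in\add M$. Finally, the two constructions are inverse: $e\End_\Lambda(M)e\cong\Lambda$ up to Morita equivalence because the projective-injective $\Gamma$-modules are exactly $\hom{\Lambda}{M,I}$.

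The main obstacle is the (2)$\Rightarrow$(1) step, specifically deriving $\add M=\mod\Lambda$ from $\gldim\le 2$. I would handle it through the Morita--Tachikawa correspondence together with the standard observation that the functor $\hom{\Lambda}{M,-}$ is fully faithful and identifies $\mod\Lambda$ with the $\Gamma$-modules of dominant dimension at least $2$, that is, those copresented by projective-injectives.
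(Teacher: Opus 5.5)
Your proof is correct. It is essentially Auslander's classical argument: left exactness of $\hom{\Lambda}{M,-}$ combined with the Morita--Tachikawa theorem. The paper does not prove this Fact; it quotes it from \cite{aus1971} and recovers it as the case $d=1$ of Theorem~\ref{thm:main}. That specialization differs from your route in two places.
\begin{itemize}
\item For $\gldim\Gamma\le 2$ you resolve an arbitrary $\Gamma$-module, using that kernels of maps in $\add M=\mod\Lambda$ stay in $\add M$. The paper resolves only the simples $S_N$, through sink maps. An Auslander--Reiten conflation $L\to N'\to N$ gives $0\to\hom{\Lambda}{M,L}\to\hom{\Lambda}{M,N'}\to\hom{\Lambda}{M,N}\to S_N\to 0$, and $\rad P\to P$ handles projective $N$ (Proposition~\ref{prop:proj_resol_of_simple} and the proposition after it). Your version needs no AR theory. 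The paper's version matches its definition of $\gldim$ through $\top\Lambda$, and it works in $\dem{\Lambda}$, which is no longer abelian for $d\ge 2$.
\item Your dominant-dimension step is the same as the paper's.
\item For (2)$\Rightarrow$(1) you use Morita--Tachikawa as a black box and then copresentations $0\to N\to M^0\to M^1$. The paper avoids the generator--cogenerator formalism. It proves directly (Lemma~\ref{lem:submorita_modd}) that $\hom{\Gamma}{P,-}$ is an equivalence from the objects \emph{presented} by $\add P$ onto $\dem{\Lambda}$. It then identifies that class via Proposition~\ref{prop:inj_dim_d+1}.
\item For $d=1$, however, the modules presented by projective-injectives over an Auslander algebra are exactly the injectives, not the projectives. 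The argument of Proposition~\ref{prop:submorita_aus} only proves injectivity. For example, take $\Gamma=\End_\Lambda(k\oplus\Lambda)$ with $\Lambda=k[x]/(x^2)$. The cokernel of $\hom{\Lambda}{M,x}\colon\hom{\Lambda}{M,\Lambda}\to\hom{\Lambda}{M,\Lambda}$ is the injective envelope of $S_k$, and it is not projective.
\item So on this point your copresentation-based route is the one that actually lands on $\add\Gamma$.
\end{itemize}

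Three small repairs to your write-up:
\begin{itemize}
\item Delete the false start in the dominant-dimension paragraph. The claim $\hom{\Lambda}{M,M}\in\add\hom{\Lambda}{M,\Lambda}$ would force $M\in\add\Lambda$, that is, $\Lambda$ semisimple.
\item To conclude that the two constructions are mutually inverse, you also need the converse inclusion: every projective-injective $\Gamma$-module has the form $\hom{\Lambda}{M,I}$ with $I$ injective. Suppose $\hom{\Lambda}{M,N}$ is injective. Then the monomorphism $\hom{\Lambda}{M,N}\to\hom{\Lambda}{M,I(N)}$ splits, so $N\to I(N)$ splits by full faithfulness. This is exactly how the paper argues; alternatively, cite the full Morita--Tachikawa bijection.
\item For $Q=e\Gamma$ one has $\End_\Gamma(Q)\cong e\Gamma e$ with no ${}^{\mathrm{op}}$. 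The two are not interchangeable here, because $\Lambda$ and $\Lambda^{\mathrm{op}}$ need not be Morita equivalent.
\end{itemize}
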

A finite-dimensional algebra $\Gamma$ satisfying the homological conditions in (2) is called an \emph{Auslander algebra}. Subsequently, O.~Iyama introduced a higher-dimensional version of the Auslander correspondence \cite{MR2298820} by replacing additive generators with $d$-cluster tilting subcategories. The corresponding endomorphism algebras are the $d$-Auslander algebras, namely finite-dimensional algebras of global dimension at most $d+1$ and dominant dimension at least $d+1$.

Meanwhile, developments in silting theory \cite{MR2927802}, derived Morita theory \cite{kel1994}, and related areas have broadened the setting beyond finite-dimensional algebras to proper connective DG-algebras and their derived categories.

Since the derived category $\der{\Lambda}$ of a DG-algebra $\Lambda$ is in general more delicate, it is often convenient to work with the $d$-extended module category $\dem{\Lambda}$, which may be viewed as a ``module category'' over a truncated DG-algebra. This category is studied in \cite{gup2024,zho2024} for algebra case and \cite{moc2025b} for general DG-algebra. We give its definition:
\begin{definition}[$=$Definition~\ref{def:extended-mod}]
Let $\Lambda$ be a proper connective DG-algebra. We define the \emph{$d$-extended module category} $\dem{\Lambda}$ of $\Lambda$ as the full subcategory of the perfect valued derived category $\pvd\Lambda$ consisting of objects $M\in\pvd\Lambda$ such that $H^i(M)=0$ for any $i>0$ and $i\leq -d$.
\end{definition}
It coincides with the usual module category $\mod H^0(\Lambda)$ when $d=1$. If a proper connective DG-algebra $\Lambda$ satisfies $H^i(\Lambda)=0$ for any $i<-d$ (equivalently, $\Lambda\in\dem{\Lambda}$), we say that $\Lambda$ is \emph{$d$-truncated}.

For proper connective DG-algebras, the author showed in \cite{moc2025} that $d$-extended module categories admit abelian $d$-truncated DG-categories as DG-enhancements, which are DG-categories with properties analogous to $d$-dimensional generalizations of abelian categories (DG-analogues of abelian $(d,1)$-categories \cite{ste2023}). It is known that extended module categories are naturally extriangulated \cite{nak2019}, admit Auslander--Reiten sequences (see \cite{MR4133519} for $d$-self-injective DG-algebras, \cite{zho2024} for finite-dimensional algebras, and \cite{moc2025b} for general proper connective DG-algebras), and satisfy the first Brauer--Thrall theorem \cite{moc2025b}.

For finite-dimensional algebras, the $d$-extended module category is a tractable object, much like the module category itself. For example, results on one-to-one correspondence between $\tau$-tilting modules, $2$-term silting complexes, and left-finite semibricks \cite{ada2014,MR4139031} have been generalized to extended module categories \cite{gup2024,zho2024,gup2025} by using $d$-extended module categories over finite-dimensional algebras and their $\tau_{[d]}$-tilting $d$-extended modules, $(d+1)$-term silting complexes, and left-finite semibricks in $d$-extended module categories.

These considerations motivate the study of the case where $d$-extended module categories have only finitely many indecomposable objects, as analogues of module categories of finite-dimensional algebras. If $\Lambda$ is a Nakayama algebra (in particular, $\Lambda$ is a finite-dimensional algebra), the cases in which the $d$-extended module category $\dem{\Lambda}$ has finitely many indecomposable objects are classified \cite{run2026}.

In this paper, we introduce a $d$-dimensional analogue in a sense different from Iyama's higher Auslander correspondence (see Table~\ref{tab:comparison-iyama}). More precisely, we characterize, for a $d$-extended module category with finitely many indecomposable objects (in this case, we call $\Lambda$ \emph{representation-finite}), the homological properties of the endomorphism connective DG-algebra of its additive generator. This recovers the classical Auslander correspondence when $d=1$. We first define Auslander $d$-truncated connective DG-algebras.

\begin{definition}[$=$Definition~\ref{dfn:auslander}]
A $d$-truncated proper connective DG-algebra $\Gamma$ is called an \emph{Auslander $d$-truncated connective DG-algebra} if its $d$-extended module category $\dem{\Gamma}$ satisfies the following conditions:
\begin{enumerate}
\item The global dimension of $\dem{\Gamma}$ is at most $d+1$ as an extriangulated category.
\item The dominant dimension of $\dem{\Gamma}$ is at least $d+1$ as an extriangulated category.
\end{enumerate}
Or equivalently, $\dem{\Gamma}$ is a $d$-Auslander extriangulated category in the sense of \cite{che2023c}.
\end{definition}
See \cite{gor2021,gor2023} for the notion of global dimension and dominant dimension of extriangulated categories. More precisely, see Definition~\ref{dfn:global dimension} and Definition~\ref{dfn:dominant_dim} for the definitions of these dimensions. The relationship between global dimension of connective DG-algebras and global dimension of their extended module categories is discussed in Proposition~\ref{prop:global_dim}. In particular, if $\dem{\Gamma}$ has global dimension at most $d+1$ as an extriangulated category, then $\Gamma$ itself has global dimension at most $2$ as a connective DG-algebra by this proposition.

To state the main result of this paper, we introduce the notion of Morita equivalence of $d$-truncated proper connective DG-algebras.

\begin{definition}[$=$Definition~\ref{dfn:morita}]
Let $\Lambda$ and $\Lambda'$ be proper $d$-truncated DG-algebras. We say that $\Lambda$ and $\Lambda'$ are \emph{Morita equivalent} if $\demdg{\Lambda}$ and $\demdg{\Lambda'}$ are quasi-equivalent as DG-categories. 
\end{definition}

The DG-categories $\demdg{\Lambda}$ and $\demdg{\Lambda'}$ are the canonical DG-enhancements of $\dem{\Lambda}$ and $\dem{\Lambda'}$ defined in Definition~\ref{dfn:can_enhance}.

The main result of this paper is the following:
\begin{theorem}[$=$Theorem~\ref{thm:main}]
There exists a one-to-one correspondence between the following two classes:
\begin{enumerate}
	\item Morita-equivalence classes of representation-finite $d$-truncated proper connective DG-algebras.
	\item Morita-equivalence classes of Auslander $d$-truncated proper connective DG-algebras.
\end{enumerate}
The correspondence from (1) to (2) is given by taking the endomorphism DG-algebra $\tauleq{0}\rend{\Lambda}{M}$ of an additive generator $M$ of $\dem{\Lambda}$ for a representation-finite $d$-truncated proper connective DG-algebra $\Lambda$. 
\end{theorem}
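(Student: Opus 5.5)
I would follow the classical strategy of Auslander's proof, transported to the DG setting via the DG-enhancement $\demdg{\Lambda}$. Set $\Gamma := \tauleq{0}\rend{\Lambda}{M}$ for an additive generator $M$ of $\dem{\Lambda}$. Since $\Lambda$ is $d$-truncated and proper, $\rend{\Lambda}{M}$ has finite-dimensional cohomology. Truncating gives a proper connective DG-algebra. Moreover, $H^i(\Gamma)=\mathrm{Hom}(M,M[i])=0$ for $i<-d$, because $M$ and $M[i]$ both lie in $\dem{\Lambda}$, whose objects have cohomology concentrated in $[-d+1,0]$. Hence $\Gamma$ is $d$-truncated.

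The central tool is the functor
\[F=\tauleq{0}\rhom{\Lambda}{M,-}\colon \dem{\Lambda}\to \dem{\Gamma}.\]
By the Yoneda-type embedding, $F$ identifies $\dem{\Lambda}$ with the projective objects $\add\Gamma$ of $\dem{\Gamma}$. The key structural fact I would establish is this: every object $X\in\dem{\Gamma}$ admits a conflation $P_1\to P_0\to X$ up to a ``second syzygy'' in $\add\Gamma$. More precisely, I would take a conflation $F(A)\to F(B)\to X$, where $A\to B$ is an arbitrary morphism in $\dem{\Lambda}$ (represented by a morphism in $\demdg{\Lambda}$). I would compute its fibre via the cone in $\pvd\Lambda$ followed by truncation into $\dem{\Lambda}$. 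This should give a projective resolution of length at most $d+1$ in the extriangulated sense, which is the analogue of the left exactness of $\mathrm{Hom}(M,-)$.

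I would use the following two results from the body of the paper:
\begin{itemize}
\item Proposition~\ref{prop:global_dim}, to pass between DG global dimension and extriangulated global dimension.
\item The existence of enough injectives in $\dem{\Lambda}$, given by $D\Lambda$ and its truncations.
\end{itemize}
Dominant dimension at least $d+1$ then comes from the following observation. The projective-injectives of $\dem{\Gamma}$ are $F(D\tauleq{0}\Lambda\text{-part})$, i.e.\ $F$ applied to the injective objects of $\dem{\Lambda}$. Applying $F$ to an injective coresolution of $M$ inside $\dem{\Lambda}$ yields the required coresolution of $\Gamma$ by projective-injectives.

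For the converse, let $\Gamma$ be Auslander. I would take $Q$ to be an additive generator of the projective-injective objects of $\dem{\Gamma}$, and set $\Lambda:=\tauleq{0}\rend{\Gamma}{Q}$. Then I would show that $\tauleq{0}\rhom{\Gamma}{Q,-}$ restricts to an equivalence $\add\Gamma\simeq\dem{\Lambda}$, lifted to $\demdg{}$. Full faithfulness on $\add\Gamma$ follows from dominant dimension at least $d+1$, by the standard argument: applying $\mathrm{Hom}(-,Q)$ to a copresentation of length $d+1$. Essential surjectivity uses global dimension at most $d+1$: every object of $\dem{\Lambda}$ comes from a copresentation by objects of $\add Q$, and its kernel in $\dem{\Gamma}$ is projective. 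This yields that $\Lambda$ is representation-finite with additive generator corresponding to $\Gamma$. The two constructions are then mutually inverse up to Morita equivalence, since Morita equivalence is defined via quasi-equivalence of $\demdg{}$, and everything is functorial at the DG level.

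The main obstacle will be the homological dimension computation in the first direction. In $\dem{\Gamma}$, conflations come from triangles whose third terms must be truncated, so the ``kernel'' of $F(A)\to F(B)$ is not simply $F$ of a kernel. I would try first to induct on the truncation degree, using the $t$-structure on $\pvd\Gamma$. The goal is to show that the fibre of $F(A)\to F(B)$ has a filtration by $F$ of shifted truncations $\tauleq{-i}$ of the cone, each lying in $\add F(M)$ after $d-1$ steps. This would give exactly $d+1$ projective terms.
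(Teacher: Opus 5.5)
\textbf{Verdict.} There is a genuine gap: the converse direction uses the wrong functor, and the global-dimension step, which you leave open, is set up in a way that cannot work as stated.

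\textbf{The converse direction.} As written, $\tauleq{0}\rhom{\Gamma}{Q,-}$ does not restrict to an equivalence $\add\Gamma\simeq\dem{\Lambda}$. A counterexample already exists for $d=1$. Let $\Lambda$ be the path algebra of $A_2$, $\Gamma=\End_\Lambda(M)$ and $Q=F(D\Lambda)$. Full faithfulness of $F$ gives $\mathrm{Hom}_\Gamma(Q,F(N))\cong\mathrm{Hom}_\Lambda(D\Lambda,N)$, which is the inverse Nakayama functor, not $N$. It vanishes for the simple projective $N$, so the nonzero projective $F(N)$ is sent to $0$.

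The covariant functor is an equivalence on $\MC_Q$, the objects \emph{presented} by $\add Q$ (Lemma~\ref{lem:submorita_modd}). For Auslander $\Gamma$ one has $\MC_Q=\MI_\Gamma$, not $\MP_\Gamma$:
\begin{itemize}
\item codominant dimension (Definition~\ref{dfn:auslander}(iii')) gives $\MI_\Gamma\subset\MC_Q$;
\item $\gldim(\dem{\Gamma})\le d+1$, with Proposition~\ref{prop:inj_dim_d+1} applied twice, gives $\MC_Q\subset\MI_\Gamma$.
\end{itemize}
Hence $\dem{\Lambda}=\add\hom{\Gamma}{Q,D_d\Gamma}$, and its endomorphism DG-algebra is $\End_\Gamma(D_d\Gamma)\simeq\Gamma$.

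Proposition~\ref{prop:submorita_aus} and Corollary~\ref{cor:2to1} are also worded with $\MP_\Gamma$ and $\hom{\Gamma}{P,\Gamma}$, and as worded they fail in this example. The argument given there only shows that objects of $\MC_P$ are injective. It is correct once $\MP_\Gamma,\Gamma$ are replaced by $\MI_\Gamma, D_d\Gamma$.

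Your own justifications are the right ones for the dual route: applying $\mathrm{Hom}(-,Q)$ to a length-$(d+1)$ copresentation, and noting that $\tauleq{0}I^\bullet$ is projective for $I^\bullet\in\MK^{[0,d]}(\add Q)$ by the global-dimension bound. But they establish an equivalence for the contravariant functor $D_d\circ\hom{\Gamma}{-,Q}$ on $\add\Gamma$, not for $\hom{\Gamma}{Q,-}$.

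\textbf{The global-dimension step.} A general $X\in\dem{\Gamma}$ is not the third term of a conflation $F(A)\to F(B)\to X$, since that would force $\pd X\le 1$. For $d\ge 2$ it is also not, in general, the truncated cone of a single map of projectives. So filtering one fibre cannot resolve arbitrary $X$. Your left-exactness idea does work, with no induction, once you make two adjustments:
\begin{itemize}
\item[(a)] Restrict to $X\in\mod H^0\Gamma$. This suffices because $\gldim\Gamma$ is tested on $\top\Gamma$, and Proposition~\ref{prop:global_dim} converts the bound to $\gldim(\dem{\Gamma})\le d+1$.
\item[(b)] For $f\colon A\to B$ in $\dem{\Lambda}$, the five lemma in degrees $\le 0$ plus Lemma~\ref{lem:calc} gives $\tauleq{0}\Cocone(Ff)\cong F(\tauleq{0}\Cocone f)$. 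In other words, $F$ preserves kernels exactly.
\end{itemize}
Lift a projective presentation of $X$ to $F(f)$ and let $C$ be its cone. Then $H^{>0}C=0$, $H^0C\cong X$ and $\tau^{\le -1}C\cong F(\tauleq{0}\Cocone f)[1]$, so $X\in\MP_\Gamma*\MP_\Gamma[1]*\MP_\Gamma[2]$.

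This is a genuinely different route from the paper's. The paper resolves each simple $S_N$ via the Auslander--Reiten conflation ending in $N$ (Proposition~\ref{prop:proj_resol_of_simple}) and via $\rad P\to P$ for projective $P$. Your route avoids needing Auslander--Reiten conflations at all. The paper's route yields minimal resolutions, which feed its quiver description.

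\textbf{Smaller points.}
\begin{itemize}
\item $\Gamma$ is $d$-truncated because $\tauleq{0}(M[i])=0$ for $i\le -d$, and the case $i=-d$ must be included. It is not because $M[i]\in\dem{\Lambda}$, which is false.
\item $F$ does not send conflations to conflations. For dominant dimension, apply $\rhom{\Lambda}{M,-}$ to a whole copresentation $I^\bullet\in\MK^{[0,d]}(\MI_\Lambda)$, using that $\rhom{\Lambda}{M,D_d\Lambda}$ is concentrated in degrees $\le 0$, as the paper does.
\item You still need to prove that $F(D_d\Lambda)$ is injective, and that $F(N)$ injective forces $N$ injective. These give $\End_\Gamma(Q)\simeq\Lambda$ and make the two constructions mutually inverse.
\end{itemize}
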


By this theorem, we can also characterize connective DG-categories which are quasi-equivalent to DG-enhancements of $d$-extended module categories of representation-finite $d$-truncated proper connective DG-algebras as follows:

\begin{cor}[$=$Corollaly~\ref{cor:main}]
Let $\mathscr{A}$ be a connective DG-category with idempotent complete homotopy category $H^0(\mathscr{A})$. Then the following are equivalent:
\begin{itemize}
\item[(1)] $\mathscr{A}$ is quasi-equivalent to a DG-enhancement of a $d$-extended module category $\mathscr{H}^d_\Lambda$ for some $d$-truncated proper connective DG-algebra $\Lambda$ of finite representation type.
\item[(2)] The homotopy category $H^0(\mathscr{A})$ has an additive generator $M$ such that its endomorphism DG-algebra $\End_{\mathscr{A}}(M):=\mathscr{A}(M,M)$ is an Auslander $d$-truncated proper connective DG-algebra. 
\end{itemize}
\end{cor}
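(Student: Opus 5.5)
We plan to deduce the corollary from the main theorem (Theorem~\ref{thm:main}) together with a DG-Morita-type comparison between a connective DG-category with an additive generator and the extended module category of the endomorphism DG-algebra of that generator.

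\emph{(1)$\Rightarrow$(2).} Assume $\mathscr{A}$ is quasi-equivalent to $\demdg{\Lambda}$ for a representation-finite $d$-truncated proper connective DG-algebra $\Lambda$. Since quasi-equivalences induce equivalences $H^0(\mathscr{A})\simeq \dem{\Lambda}$, we can transport an additive generator $M$ of $\dem{\Lambda}$ to an additive generator of $H^0(\mathscr{A})$, still denoted $M$. The quasi-equivalence gives a quasi-isomorphism $\mathscr{A}(M,M)\simeq \demdg{\Lambda}(M,M)$. By the construction of the canonical enhancement (Definition~\ref{dfn:can_enhance}), the latter is quasi-isomorphic to $\tauleq{0}\rend{\Lambda}{M}$. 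The main theorem says exactly that this DG-algebra is Auslander $d$-truncated proper connective. Being Auslander is a property of $\dem{-}$, and it is invariant under quasi-isomorphism of DG-algebras. So (2) holds.

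\emph{(2)$\Rightarrow$(1).} Let $M$ be an additive generator of $H^0(\mathscr{A})$ with $\Gamma:=\mathscr{A}(M,M)$ Auslander. By the surjectivity part of Theorem~\ref{thm:main}, there is a representation-finite $d$-truncated proper connective $\Lambda$ with an additive generator $N$ of $\dem{\Lambda}$ such that $\tauleq{0}\rend{\Lambda}{N}$ is Morita equivalent to $\Gamma$. We then argue in two steps.
\begin{enumerate}
\item Let $\mathscr{A}_M$ be the full DG-subcategory on the single object $M$. Since $M$ is an additive generator and $H^0(\mathscr{A})$ is idempotent complete, the inclusion $\mathscr{A}_M\hookrightarrow\mathscr{A}$ induces a quasi-equivalence between $\mathscr{A}$ and the DG-category of direct summands of finite direct sums of $M$ inside the connective DG-modules over $\Gamma$. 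Call the latter $\add_{dg}\Gamma$; it is a DG-enhancement of $\operatorname{proj}$ of $\Gamma$ in the truncated sense.
\item The same construction applied to $\demdg{\Lambda}$ and $N$ identifies $\demdg{\Lambda}$ with $\add_{dg}\tauleq{0}\rend{\Lambda}{N}$.
\end{enumerate}
It then remains to show that Morita equivalent Auslander algebras $\Gamma,\Gamma'$, meaning $\demdg{\Gamma}\simeq\demdg{\Gamma'}$, have quasi-equivalent $\add_{dg}$. Here we use the fact that the projective objects of the extriangulated category $\dem{\Gamma}$ are exactly $\add\Gamma$. A quasi-equivalence of enhancements preserves the extriangulated structure and hence the projectives, so restricting it gives $\add_{dg}\Gamma\simeq\add_{dg}\Gamma'$. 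Combining these yields $\mathscr{A}\simeq\demdg{\Lambda}$.

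The main obstacle is step (1): showing that a connective DG-category with an idempotent complete additive generated $H^0$ is determined up to quasi-equivalence by the endomorphism DG-algebra of the generator. We would first try the standard Yoneda embedding $\mathscr{A}\to\operatorname{Mod}\Gamma$, $X\mapsto\mathscr{A}(M,X)$. This is quasi-fully faithful on $M$, hence on $\add M$ by additivity, and it is essentially surjective onto summands by idempotent completeness. The connectivity assumption ensures that no truncation issues arise.
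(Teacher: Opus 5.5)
Your argument is correct. For (1)$\Rightarrow$(2), and for the first half of (2)$\Rightarrow$(1), it matches the paper: there you identify $\mathscr{A}$ with $\mathscr{P}_\Gamma$ via $X\mapsto\mathscr{A}(M,X)$, using idempotent completeness for essential surjectivity.

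You differ in how you pass from $\mathscr{P}_\Gamma$ to $\demdg{\Lambda}$. The paper takes $P$ an additive generator of $\MP_\Gamma\cap\MI_\Gamma$ and sets $\Lambda=\End_\Gamma(P)$. It then uses the explicit functor $\hom{\Gamma}{P,-}$, whose restriction to projectives is a quasi-equivalence $\mathscr{P}_\Gamma\to\demdg{\Lambda}$ by the proof of Lemma~\ref{lem:submorita_modd} and Proposition~\ref{prop:submorita_aus}. You instead use only the statement of the main theorem. You apply your Yoneda identification a second time, to $\demdg{\Lambda}$ with generator $N$; this is legitimate, since $\demdg{\Lambda}$ is connective and $\dem{\Lambda}$ is Krull--Schmidt, hence idempotent complete. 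You then bridge $\End_\Lambda(N)$ and $\Gamma$ by the ``only if'' direction of Remark~\ref{rmk:morita}.

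Your route is more modular: the theorem stays a black box and only one general lemma is used twice. The cost is the Morita-invariance step. There your justification (``a quasi-equivalence preserves the extriangulated structure'') is asserted rather than proved. What you actually need is only that projectives are preserved. That follows because deflations and projectives are detected on hom complexes of the DG-enhancement, by the dual part of Lemma~\ref{lem:n-mono_epi_chara} and Lemma~\ref{lem_chara_proj_inj}.

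You could also drop that step entirely. Corollary~\ref{cor:2to1} gives a quasi-isomorphism $\End_\Lambda\bigl(\hom{\Gamma}{P,\Gamma}\bigr)\cong\Gamma$. So choosing $N=\hom{\Gamma}{P,\Gamma}$ makes the two endomorphism algebras quasi-isomorphic, not merely Morita equivalent.

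One minor slip: in your step (1), the objects of $\add_{dg}\Gamma$ are summands of finite sums of $\Gamma$, not of $M$.
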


\begin{table}[h]
\centering
\renewcommand{\arraystretch}{1.4}
\setlength{\extrarowheight}{5pt}
\begin{tabular}{|
  >{\centering\arraybackslash}m{7.5cm}|
  >{\centering\arraybackslash}m{7.5cm}|}
\hline
\makecell[c]{Iyama's generalization\\ \cite{MR2298820}}
&
\makecell[c]{This paper}
\\ \hline
\makecell[c]{finite-dimensional algebra $\Lambda$}
&
\makecell[c]{$d$-truncated proper connective\\ DG-algebra $\Lambda$}
\\ \hline
\makecell[c]{module category $\mathrm{mod}\,\Lambda$}
&
\makecell[c]{$d$-extended module category $\mathcal{H}^d_\Lambda$\\(or its DG-enhancement)}
\\ \hline
\makecell[c]{$d$-cluster tilting object\\ $M \in \mathrm{mod}\,\Lambda$}
&
\makecell[c]{additive generator\\ $M\in\mathcal{H}^d_\Lambda$}
\\ \hline
\makecell[c]{endomorphism algebra\\ $\End_{\Lambda}(M)$}
&
\makecell[c]{endomorphism DG-algebra\\
$\tau^{\le 0}\mathbb{R}\End_{\Lambda}(M)$}
\\ \hline
\makecell[c]{$\mathrm{mod}\,\Gamma$ is $d$-Auslander\\
as an extriangulated category}
&
\makecell[c]{$\mathcal{H}^d_\Gamma$ is $d$-Auslander\\
as an extriangulated category}
\\ \hline
\makecell[c]{Classical Auslander correspondence\\ for the case $d=1$}
&
\makecell[c]{Classical Auslander correspondence\\ for the case $d=1$}
\\ \hline
\end{tabular}
\caption{Comparison between Iyama's higher Auslander correspondence and the correspondence in this paper.}
\label{tab:comparison-iyama}
\end{table}

\subsection*{Acknowledgements}
The author is grateful to Ryu Tomonaga for his suggestions of this research topic and for his valuable comments. The author would also like to thank Prof. Osamu Iyama and Prof. Hiroyuki Nakaoka for their helpful comments.

This work was supported by Make New Standard program at Nagoya University.

\section{\texorpdfstring{$d$-truncated DG-algebras and $d$-extended module categories}{d-truncated DG-algebras and d-extended module categories}}

This section briefly reviews $d$-truncated DG-algebras and $d$-extended module categories. For details, see \cite{moc2025,moc2025b}.

Throughout, fix a field $k$ and call a differential graded algebra over $k$ a DG-algebra.

\subsection{\texorpdfstring{The notion of $d$-extended module categories}{The notion of d-extended module categories}}

We begin by recalling basic definitions and properties of DG-algebras and their derived categories. These notions are standard; see \cite{kel1994,kel2006}.

\begin{definition}
Let $\Lambda$ be a DG-algebra.
\begin{enumerate}
	\item $\Lambda$ is \emph{connective} if $H^i(\Lambda)=0$ for any $i>0$.
	\item $\Lambda$ is \emph{$d$-truncated} if $H^i(\Lambda)=0$ for any $i\leq-d$.
	\item $\Lambda$ is \emph{proper} if $\dim_k H^*(\Lambda)<\infty$.
\end{enumerate}
\end{definition}

\begin{definition}
Let $\Lambda$ be a DG-algebra. 
\begin{itemize}
  \item The \emph{derived category} of $\Lambda$, denoted by $\der{\Lambda}$, is defined as the Verdier quotient of the homotopy category $\htpy{\Lambda}$ of right DG-modules over $\Lambda$ by the subcategory of acyclic right DG-modules. 
  \item The \emph{perfect derived category} of $\Lambda$, denoted by $\per\Lambda$, is defined as the thick subcategory of $\der{\Lambda}$ generated by $\Lambda\in\der{\Lambda}$. 
	\item The \emph{perfect valued derived category} of $\Lambda$, denoted by $\pvd\Lambda$, is defined as the full subcategory of the derived category $\der{\Lambda}$ consisting of objects $M\in\der{\Lambda}$ such that $\sum_{i\in\mathbb{Z}}\dim_k H^i(M)<\infty$. 
\end{itemize}
\end{definition}

\begin{remark}\label{rmk:t-str}
Because $\der{\Lambda}$ is a compactly generated triangulated category whose generating compact object is $\Lambda$, there is a $t$-structure $(\der{\Lambda}^{\leq 0}, \der{\Lambda}^{\geq 0})$ on $\der{\Lambda}$ given by:
$$\der{\Lambda}^{\leq 0}=\{M\in\der{\Lambda}\mid H^i(M):=0\text{ for any }i>0\}$$
and the functor $H^0\colon \der{\Lambda} \to \Mod H^0(\Lambda)$ restricts to an equivalence between the heart of this $t$-structure and $\Mod H^0(\Lambda)$. Moreover, if $\Lambda$ is connective, we have 
$$\der{\Lambda}^{\geq 0} = \{M\in\der{\Lambda}\mid H^i(M)=0\text{ for any }i<0\}.$$
If $\Lambda$ is proper connective, then the $t$-structure $\bigl(\der{\Lambda}^{\leq 0}, \der{\Lambda}^{\geq 0}\bigr)$ can be restricted to a $t$-structure $\bigl((\pvd\Lambda)^{\leq 0}, (\pvd\Lambda)^{\geq 0}\bigr)$ on $\pvd\Lambda$ and its heart is equivalent to $\mod H^0(\Lambda)$.
For more details, see \cite[Chapter~III]{bel2007}.
\end{remark}

\begin{lemma}\label{lem:homology_functor}
Let $\Lambda$ be a proper connective DG-algebra. Then the functor $H^0\colon \pvd\Lambda \to \mod H^0(\Lambda)$ induces a natural isomorphism for any $P\in\add \Lambda$ and $X\in\pvd\Lambda$:
$$\pvd\Lambda(P,X)\cong \mod H^0(\Lambda)\bigl(H^0(P),H^0(X)\bigr).$$
\end{lemma}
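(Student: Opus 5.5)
By additivity of both sides in $P$, it suffices to treat $P=\Lambda$; the general case of $P\in\add\Lambda$ then follows by taking finite direct sums and direct summands. Both sides are additive functors in $P$, and the map is induced by the single functor $H^0$, so it is natural and compatible with sums and retracts.

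For $P=\Lambda$, I would compute each side separately. On the left, since $\Lambda$ is the free module of rank one,
\[
\pvd\Lambda(\Lambda,X)=\der{\Lambda}(\Lambda,X)\cong H^0(X),
\]
via $f\mapsto H^0(f)(1)$, evaluation at the unit. On the right, $H^0(\Lambda)$ is the free right $H^0(\Lambda)$-module of rank one, so
\[
\mod H^0(\Lambda)\bigl(H^0(\Lambda),H^0(X)\bigr)\cong H^0(X),
\]
again by evaluation at $1$. It remains to check that the map induced by $H^0$ is compatible with these two identifications. If $f\colon\Lambda\to X$ corresponds to the class $x=H^0(f)(1)$, then $H^0(f)$ is the module map $a\mapsto x\cdot a$. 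This holds because $H^0(f)$ is $H^0(\Lambda)$-linear, being induced by a morphism of DG-modules, or equivalently because the action of $H^0(\Lambda)$ on $H^0(X)$ is defined through $\der{\Lambda}(\Lambda,\Lambda)\cong H^0(\Lambda)$. Hence the triangle formed by the two evaluation isomorphisms and the functor $H^0$ commutes, and the map induced by $H^0$ is bijective.

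The only delicate point is that last compatibility: that the right $H^0(\Lambda)$-module structure on $H^0(X)$, defined at the cochain level, agrees with precomposition by endomorphisms of $\Lambda$ in $\der{\Lambda}$. This is the standard identification $\der{\Lambda}(\Lambda,\Lambda[i])\cong H^i(\Lambda)$, and I expect no real obstacle there. Connectivity and properness are used only so that the target category is $\mod H^0(\Lambda)$ and $H^0$ indeed lands there (Remark~\ref{rmk:t-str}).
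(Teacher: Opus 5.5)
Your proposal is correct and takes the same route as the paper, which simply asserts the case $P=\Lambda$ and extends to $\add\Lambda$ by additivity. You also work out the $P=\Lambda$ case, via evaluation at the unit and the $H^0(\Lambda)$-linearity of $H^0(f)$, which the paper leaves implicit; one small correction is that properness is what places $\Lambda$ in $\pvd\Lambda$, while connectivity plays no role here.
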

\begin{proof}
It holds for $P=\Lambda$, hence for any $P\in\add \Lambda$.
\end{proof}

\begin{notation}
Let $\Lambda$ be a proper connective DG-algebra. For any $M\in\pvd\Lambda$, write the truncation of $M$ as $\tau^{\leq i}M=\tau^{<i+1}M\in (\pvd\Lambda)^{\leq i}$ for any $i\in\mathbb{Z}$. Dually, write $\tau^{\geq i}M=\tau^{>i-1}M\in (\pvd\Lambda)^{\geq i}$ for any $i\in\mathbb{Z}$.
\end{notation}

\begin{definition}\label{k-dual_pvd}
Let $\Lambda$ be a proper connective DG-algebra. Define the \it{$k$-dual functor} by
$$D:=\rhom{k}{-,k}\colon (\pvd\Lambda)\op\to \pvd(\Lambda\op).$$
\end{definition}

\begin{remark}
The $k$-dual functor $D$ is an equivalence between $\pvd\Lambda$ and $\pvd\Lambda\op$. Moreover, we have a natural isomorphism $D(H^i(M)) \cong H^{-i}(D(M))$ for any $M \in \pvd\Lambda$.
\end{remark}

We now introduce $d$-extended module categories over proper connective DG-algebras, the main objects of study in this paper.

\begin{definition}\label{def:extended-mod}
Let $\Lambda$ be a proper connective DG-algebra and $d$ be a positive integer. The \emph{$d$-extended module category} of $\Lambda$, denoted by $\dem{\Lambda}$, is defined as the full subcategory of $\pvd\Lambda$ consisting of objects $M\in\pvd\Lambda$ such that $H^i(M)=0$ for any $i>0$ and $i\leq -d$.
\end{definition}

\begin{remark}
By Remark~\ref{rmk:t-str}, the case $d=1$ recovers the usual module category $\mod H^0(\Lambda)$.
\end{remark}

\begin{remark}\label{rmk:reduced}
Let $\Lambda$ be a proper connective DG-algebra. There is a DG-algebra homomorphism $\Lambda\to \tau^{>-d}\Lambda$ where $\tau^{>-d}\Lambda$ is the quotient DG-algebra $\Lambda/\tau^{\leq -d}\Lambda$ with DG-ideal $\tau^{\leq -d}\Lambda$. This induces a functor 
$$\tau^{\leq 0}\bigl(\rhom{\Lambda}{\tau^{>-d}\Lambda,-}\bigr)\colon \der{\Lambda}\to \der{\tau^{>-d}\Lambda}.$$
It is straightforward to check that this functor restricts to an equivalence $\dem{\Lambda}\to \dem{\tau^{>-d}\Lambda}$. Hence, without loss of generality, we may assume that any proper connective DG-algebra $\Lambda$ is $d$-truncated when considering its $d$-extended module category.
\end{remark}

Henceforth, fix a $d$-truncated proper connective DG-algebra $\Lambda$.

For the notion of Morita equivalence in Section~\ref{sec:auslander}, we introduce a canonical DG-enhancement of $\dem{\Lambda}$.

\begin{definition}\label{dfn:can_enhance}
Define the DG-category $\demdg{\Lambda}$ as the full DG-subcategory of $\tau^{\leq 0}\bigl(\mathscr{D}(\Lambda)\bigr)$ whose objects have cohomologies concentrated in degrees $[-d+1,0]$. Here, the DG-category $\mathscr{D}(\Lambda)$ is the natural DG-enhancement of the derived category $\MD(\Lambda)$ (for example, the DG-category of $\mathrm{K}$-projective right DG-modules over $\Lambda$). Consequently, $H^0(\demdg{\Lambda})\cong \dem{\Lambda}$.
\end{definition}

\begin{remark}
The DG-category $\demdg{\Lambda}$ is an example of an abelian $d$-truncated DG-category introduced in \cite{moc2025}. This DG-category also admits the structure of an exact DG-category, as introduced in \cite{che2023, che2024,che2024b}. Thus, some properties of abelian $d$-truncated DG-categories can be applied to $\demdg{\Lambda}$ and its homotopy category $\dem{\Lambda}$.
\end{remark}

\begin{definition}
Define a functor $D_{d}\colon (\dem{\Lambda})\op \to \dem{\Lambda\op}$ as the composite of the $k$-dual functor $D\colon (\pvd\Lambda)\op\to \pvd(\Lambda\op)$ and the shift functor $[d-1]\colon \pvd(\Lambda\op)\to \pvd(\Lambda\op)$. Namely, for any $M\in\dem{\Lambda}$, set $D_d(M):=D(M)[d-1]$.
\end{definition}

\begin{proposition}
The category $\dem{\Lambda}$ carries a natural extriangulated structure:
\begin{enumerate}
\item Its extension $\BE(N,L)$ corresponds to $\der{\Lambda}(N,L[1])$ for any $N,L\in\dem{\Lambda}$,
\item Its conflations correspond to triangles in $\pvd\Lambda$ whose terms belong to $\dem{\Lambda}$,
\item It has enough projectives with projective objects $\add\Lambda$,
\item It has enough injectives with injective objects $\add D_d\Lambda$.
\end{enumerate}
\end{proposition}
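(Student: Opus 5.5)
The plan is to obtain the extriangulated structure from Nakaoka--Palu's result that an extension-closed full subcategory of a triangulated category is extriangulated. I will then check projectives and injectives by hand, using the $t$-structure of Remark~\ref{rmk:t-str} and Lemma~\ref{lem:homology_functor}.

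\emph{Extension-closedness.} Given a triangle $L\to M\to N\to L[1]$ in $\pvd\Lambda$ with $L,N\in\dem{\Lambda}$, the long exact cohomology sequence gives $H^i(M)=0$ for $i>0$ and $i\le -d$, since the flanking terms $H^i(L)$ and $H^i(N)$ vanish in those degrees. Finite total dimension is also preserved. So $\dem{\Lambda}$ is extension-closed in $\pvd\Lambda$. The general result then gives an extriangulated structure with $\BE(N,L)=\pvd\Lambda(N,L[1])=\der{\Lambda}(N,L[1])$, whose conflations are exactly the triangles with all terms in $\dem{\Lambda}$. This proves (1) and (2).

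\emph{Projectives.} Since $\Lambda$ is $d$-truncated and proper, $\Lambda\in\dem{\Lambda}$. For $X\in\dem{\Lambda}$ we have $\BE(\Lambda,X)=H^1(X)=0$, so every object of $\add\Lambda$ is projective.

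To show there are enough projectives, take $X\in\dem{\Lambda}$ and choose a projective cover $H^0(P)\to H^0(X)$ in $\mod H^0(\Lambda)$ with $P\in\add\Lambda$. By Lemma~\ref{lem:homology_functor} it lifts to a morphism $f\colon P\to X$ which is surjective on $H^0$. Complete it to a triangle $K\to P\to X\to K[1]$. The long exact sequence gives:
\begin{itemize}
\item $H^1(K)=0$, since $H^0(f)$ is surjective and $H^1(P)=0$;
\item $H^i(K)=0$ for $i>1$, since $H^i(P)=0=H^{i-1}(X)$ there;
\item $H^i(K)=0$ for $i\le -d$, since $H^{i-1}(X)=0$ and $H^i(P)=0$ there.
\end{itemize}
Hence $K\in\dem{\Lambda}$ and the triangle is a conflation. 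If an object $Q$ is projective, this conflation for $Q$ splits, so $Q\in\add\Lambda$.

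\emph{Injectives.} I would argue dually with $D_d$. The functor $D$ is an anti-equivalence $\pvd\Lambda\to\pvd\Lambda\op$, and $D(H^i(M))\cong H^{-i}(DM)$. So $D(-)[d-1]$ sends objects with cohomology in $[-d+1,0]$ to objects with cohomology in $[-d+1,0]$. Thus $D_d$ is an anti-equivalence $\dem{\Lambda}\to\dem{\Lambda\op}$ preserving triangles up to the reversal of direction, i.e.\ it exchanges conflations and $\BE$ in opposite variables. Applying the projective statement to $\Lambda\op$, which is again $d$-truncated proper connective, and transporting back, gives enough injectives with injectives $\add D_d(\Lambda\op)$. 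Here $D_d(\Lambda\op)=D\Lambda[d-1]$ regarded as a right $\Lambda$-module, which is the object the statement writes as $D_d\Lambda$.

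The main obstacle is this injective part. One must check carefully that $D_d$ is compatible with the extriangulated structures, i.e.\ that $D$ is exact as a contravariant triangle functor so the shifts line up: $\BE(N,L)\cong\BE'(D_dL,D_dN)$. If the duality argument feels slippery, I would instead verify $\BE(X,D\Lambda[d-1])\cong D H^{d}(X)=0$ directly from $\der{\Lambda}(X,D\Lambda)\cong DH^0(X)$. I would then build the injective envelope by dualizing the truncation argument above.
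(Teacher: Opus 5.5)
Your proof is correct. For (1) and (2) it matches the paper: the paper also obtains the structure from \cite[Remark~2.18]{nak2019}, applied to the extension-closed subcategory $\dem{\Lambda}\subset\pvd\Lambda$.

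For (3) and (4) the paper gives no argument of its own and simply cites \cite[Corollary~2.3]{moc2025b}. You prove them directly instead:
\begin{itemize}
\item you lift a projective cover along Lemma~\ref{lem:homology_functor};
\item you check with the long exact sequence that the cocone stays in $\dem{\Lambda}$, which is exactly where the $d$-truncatedness of $\Lambda$ is used;
\item you transport the result to injectives through the duality $D_d$, and $\BE(N,L)\cong\BE(D_dL,D_dN)$ is the right compatibility to check there.
\end{itemize}
This makes the proposition self-contained at little cost.

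There is one small slip in your fallback computation. The correct formula is $\BE(X,D\Lambda[d-1])=\der{\Lambda}(X,D\Lambda[d])\cong DH^{-d}(X)$, not $DH^{d}(X)$. So the vanishing comes from the truncation condition $H^{\le -d}(X)=0$, not from $H^{>0}(X)=0$, and that is precisely why the shift in $D_d$ is $d-1$.
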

\begin{proof}
See \cite[Remark~2.18]{nak2019} for (1) and (2), and \cite[Corollary~2.3]{moc2025b} for (3) and (4).
\end{proof}

\begin{notation}
We denote the projective objects of $\dem{\Lambda}$ by $\MP_\Lambda$. Dually, we denote the injective objects of $\dem{\Lambda}$ by $\MI_\Lambda$. Thus we have $\MP_\Lambda=\add \Lambda$ and $\MI_\Lambda=\add D_d\Lambda$ in $\pvd\Lambda$.
\end{notation}

For an extriangulated category $(\MC,\BE)$, higher extension functors $\BE^n(-,-)$ are defined in \cite{gor2021} for any $n\geq 1$. For $d$-extended module categories, they admit the following description:

\begin{proposition}\label{prop:ext}
For any $n\geq 1$ and $L,N\in\dem{\Lambda}$, there is an isomorphism 
$$\BE^n(N,L) \cong \pvd\Lambda(N,L[n]).$$
\end{proposition}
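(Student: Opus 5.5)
We argue by induction on $n$, using that $\BE^n$ in the sense of \cite{gor2021} can be computed by dimension shifting along projective resolutions: $\BE^n(N,L)\cong \BE^{n-1}(\Omega N,L)$ for $n\geq 2$, where $\Omega N$ is a syzygy, i.e.\ the cocone term of a conflation $\Omega N\to P\to N$ in $\dem{\Lambda}$ with $P\in\MP_\Lambda$. Such conflations exist because $\dem{\Lambda}$ has enough projectives. The case $n=1$ is part (1) of the preceding proposition. More precisely, \cite{gor2021} shows that $\BE^{n}(N,L)$ is the cokernel of $\BE^{n-1}(P,L)\to\BE^{n-1}(\Omega N,L)$, and $\BE^{n-1}(P,L)=0$ for $n\geq 2$ since $P$ is projective. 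So it suffices to establish the same dimension-shift isomorphism for the functors $\pvd\Lambda(-,L[n])$.

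First, the conflation corresponds to a triangle $\Omega N\to P\to N\to \Omega N[1]$ in $\pvd\Lambda$. Applying $\pvd\Lambda(-,L[n])$ gives the exact sequence
\[
\pvd\Lambda(P,L[n-1])\to\pvd\Lambda(\Omega N,L[n-1])\to\pvd\Lambda(N,L[n])\to\pvd\Lambda(P,L[n]).
\]
Second, we show the outer terms vanish for $n\geq 2$. By Lemma~\ref{lem:homology_functor} applied to $X=L[m]$, we get $\pvd\Lambda(P,L[m])\cong \mod H^0(\Lambda)\bigl(H^0(P),H^m(L)\bigr)$. Since $L\in\dem{\Lambda}$, we have $H^m(L)=0$ for $m>0$, so both outer terms vanish when $n-1\geq 1$. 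Hence
\[
\pvd\Lambda(\Omega N,L[n-1])\cong \pvd\Lambda(N,L[n]).
\]
Combined with the induction hypothesis $\BE^{n-1}(\Omega N,L)\cong\pvd\Lambda(\Omega N,L[n-1])$, this gives the claim.

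The main obstacle is naturality and compatibility. The identification of $\BE^n$ via syzygies in \cite{gor2021} is made through the connecting morphisms of the long exact sequence. We must check that the triangulated connecting map $\pvd\Lambda(\Omega N,L[n-1])\to\pvd\Lambda(N,L[n])$, which is precomposition with $N\to\Omega N[1]$, matches the extriangulated one under the identification for $\BE^{n-1}$. This holds because the extriangulated structure on $\dem{\Lambda}$ is induced from the triangulated structure on $\pvd\Lambda$: by \cite[Remark~2.18]{nak2019} the realization of an extension is the triangle, so the connecting maps agree.

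Alternatively, one could invoke the general fact that for an extension-closed subcategory $\MC$ of a triangulated category with $\pvd\Lambda(P,\MC[m])=0$ for projectives $P$ and $m\geq1$, the higher extensions coincide with shifted Hom groups. The argument above is exactly the proof of that fact.
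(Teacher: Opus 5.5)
Your proof is correct and follows the paper's argument: you dimension-shift along syzygy conflations $\Omega N\to P\to N$ using \cite{gor2021}, and on the triangulated side you use the vanishing $\pvd\Lambda(P,L[m])=0$ for $P\in\MP_\Lambda$ and $m>0$; the paper applies \cite[Corollary~3.21]{gor2021} once to get $\BE^n(N,L)\cong\BE(\Omega^{n-1}N,L)$ rather than inducting one step at a time. The compatibility-of-connecting-maps check you flag as the main obstacle is not actually needed, since the statement only asserts an abstract isomorphism and composing the individual isomorphisms already gives one.
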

\begin{proof}
Because $\dem{\Lambda}$ has enough projectives, we can construct conflations of the form
\begin{equation}\label{eq:syzygy}
\Omega^{i+1} N\to P_i\to\Omega^i N\dashrightarrow
\end{equation}
where $0\leq i\leq n-1$ and $\Omega^0(N):=N$. By \cite[Corollary~3.21]{gor2021}, there exists an isomorphism $\BE^n(N,L)\cong \BE(\Omega^{n-1}N,L)$. Applying the cohomological functor $\pvd\Lambda(-,L[n-i-1])$ to \eqref{eq:syzygy} yields long exact sequences:
$$\pvd\Lambda(P_i,L[n-i-1]) \to \pvd\Lambda(\Omega^{i+1}N,L[n-i-1]) \to \pvd\Lambda(\Omega^i N,L[n-i]) \to \pvd\Lambda(P_i,L[n-i])$$
for any $0\leq i\leq n-1$. Because $P_i\in\MP_\Lambda$ and $L\in \dem{\Lambda}$, we have $\pvd\Lambda(P_i,L[>0])=0$. Hence there are isomorphisms $\pvd\Lambda(\Omega^{i+1}N,L[n-i-1])\cong \pvd\Lambda(\Omega^i N,L[n-i])$ for any $0\leq i\leq n-1$. Composing them gives
\begin{equation*}
\BE^n(N,L)\cong \BE(\Omega^{n-1}N,L)  \cong \pvd\Lambda(\Omega^{n-1}N,L[1]) \cong \pvd\Lambda(N,L[n]). \qedhere
\end{equation*}
\end{proof}

\begin{lemma}
Let $L,N\in\dem{\Lambda}$. Then $H^i\bigl(\rhom{\Lambda}{L,N}\bigr)=0$ for any $i\leq-d$. 
\end{lemma}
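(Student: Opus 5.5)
We must show that $H^i\bigl(\rhom{\Lambda}{L,N}\bigr)=\der{\Lambda}(L,N[i])=0$ for $i\leq -d$, that is, $\der{\Lambda}(L[j],N)=0$ for $j=-i\geq d$. The plan is a $t$-structure argument that uses only the degree bounds on $L$ and $N$, via the $t$-structure of Remark~\ref{rmk:t-str}.

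First, $L\in\dem{\Lambda}$ has $H^m(L)=0$ for $m>0$, so $L\in\der{\Lambda}^{\leq 0}$. Shifting gives $L[j]\in\der{\Lambda}^{\leq -j}\subseteq\der{\Lambda}^{\leq -d}$ for $j\geq d$. Second, $N$ has $H^m(N)=0$ for $m\leq -d$, i.e.\ $H^m(N)=0$ for $m<-d+1$. Since $\Lambda$ is connective, Remark~\ref{rmk:t-str} identifies $\der{\Lambda}^{\geq 0}$ by cohomology vanishing in negative degrees, and after shifting we get $N\in\der{\Lambda}^{\geq -d+1}$.

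The orthogonality axiom of a $t$-structure gives $\der{\Lambda}(X,Y)=0$ for $X\in\der{\Lambda}^{\leq n}$ and $Y\in\der{\Lambda}^{\geq n+1}$. Taking $n=-d$, $X=L[j]$ and $Y=N$, we obtain $\der{\Lambda}(L[j],N)=0$ for all $j\geq d$. Since $H^i\bigl(\rhom{\Lambda}{L,N}\bigr)\cong\der{\Lambda}(L,N[i])\cong\der{\Lambda}(L[-i],N)$, this vanishes for all $i\leq -d$.

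The only point needing care is the identification of $\der{\Lambda}^{\geq 0}$ with the cohomological condition. This is exactly where connectivity of $\Lambda$ enters, and Remark~\ref{rmk:t-str} supplies it. No properness or truncatedness of $\Lambda$ is needed beyond this, and the whole argument is routine.
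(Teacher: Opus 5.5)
Your proof is correct and follows the paper's argument. The paper writes $H^i\bigl(\rhom{\Lambda}{L,N}\bigr)\cong\pvd\Lambda\bigl(L,\tau^{\leq 0}(N[i])\bigr)$, using that $L$ lies in the aisle, and then observes that $\tau^{\leq 0}(N[i])=0$ for $i\leq -d$; this is the same $t$-structure orthogonality you invoke directly, and working in $\der{\Lambda}$ rather than $\pvd\Lambda$ is harmless because $\pvd\Lambda$ is a full subcategory.
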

\begin{proof}
Because $\bigl((\pvd\Lambda)^{\leq 0},(\pvd\Lambda)^{\geq 0}\bigr)$ is a $t$-structure on $\pvd\Lambda$ by Remark~\ref{rmk:t-str}, we obtain the following isomorphisms:
$$
H^i\bigl(\rhom{\Lambda}{L,N}\bigr) \cong \pvd\Lambda(L,N[i]) \cong \pvd\Lambda\bigl(L,\tau^{\leq 0}(N[i])\bigr) 
$$
for any $i\in\mathbb{Z}$. If $i\leq -d$, then $\tau^{\leq 0}(N[i])=0$ because $N\in\dem{\Lambda}$. Therefore $H^i\bigl(\rhom{\Lambda}{L,N}\bigr)=0$ for any $i\leq -d$. 
\end{proof}

This lemma justifies defining the hom complex as follows:

\begin{definition}
Let $L,N\in\dem{\Lambda}$. We define the \emph{hom complex} $\hom{\Lambda}{L,N}\in\dem{k}$ from $L$ to $N$ as
$$\hom{\Lambda}{L,N}:=\tauleq{0}\bigl(\rhom{\Lambda}{L,N}\bigr).$$
This construction yields a bilinear functor $\hom{\Lambda}{-,-}\colon (\dem{\Lambda})\op\times \dem{\Lambda}\to \dem{k}$. For a morphism $f\colon M\to N$ and $L\in\dem{\Lambda}$, we denote the induced morphism by 
$$f_*\colon \hom{\Lambda}{L,M}\to \hom{\Lambda}{L,N} \quad\text{ and }\quad f^*\colon \hom{\Lambda}{N,L}\to \hom{\Lambda}{M,L}.$$
\end{definition}

\begin{remark}\label{rmk:0-th_hom}
Observe that the $0$-th cohomology of the hom complex $\hom{\Lambda}{L,N}$ corresponds to the morphism space $\dem{\Lambda}(L,N)$ for any $L,N\in\dem{\Lambda}$. 
\end{remark}

\begin{definition}\label{dfn:end_alg}
Let $M\in\dem{\Lambda}$. Define the \it{endomorphism $d$-truncated DG-algebra} $\End_\Lambda(M)$ of $M$ as $\tau^{\leq 0}\bigl(\rend{\Lambda}{M}\bigr)$ where $\rend{\Lambda}{M}$ is the derived endomorphism DG-algebra of $M$.
\end{definition}

\begin{definition}
Let $f\colon L\to N$ be a morphism in $\dem{\Lambda}$ and $1\leq n\leq d$. We say that $f$ is an \emph{$n$-monomorphism} if the induced morphism $H^{i}f\colon H^{i}L\to H^{i}N$ is isomorphism for any $i \leq -n$ and $H^{-n+1}f$ is a monomorphism in $\mod H^0\Lambda$. Dually, we say that $f$ is an \emph{$n$-epimorphism} if the induced morphisms $H^{i}f\colon H^{i}L\to H^{i}N$ are isomorphisms for any $i\geq -d+n+1$ and $H^{-d+n}f$ is an epimorphism in $\mod H^0\Lambda$.
\end{definition}

\begin{lemma}\label{lem:n-mono_epi_chara}
Let $f\colon M\to N$ be a morphism in $\dem{\Lambda}$ and $1\leq n\leq d$. Then the following are equivalent:
\begin{enumerate}
\item The morphism $f$ is an $n$-monomorphism.
\item The cocone $\Cocone f$ of $f$ satisfies $H^{\leq -n+1}(\Cocone f)=0$.
\item The induced morphism $f_*\colon \hom{\Lambda}{L,N}\to \hom{\Lambda}{L,M}$ is an $n$-monomorphism for any $L\in\dem{\Lambda}$.
\end{enumerate}
Dually, the following are equivalent:
\begin{enumerate}
\item The morphism $f$ is an $n$-epimorphism.
\item The cone $\Cone f$ of $f$ satisfies $H^{\geq -d+n}(\Cone f)=0$.
\item The induced morphism $f^*\colon \hom{\Lambda}{M,L}\to \hom{\Lambda}{N,L}$ is an $n$-monomorphism for any $L\in\dem{\Lambda}$.
\end{enumerate}
\end{lemma}
\begin{proof}
The equivalence between (1) and (2) is clear. The equivalence between (1) and (3) follows from \cite[Proposition~3.47]{moc2025}.
\end{proof}

The following corollaries are immediate consequences of Lemma~\ref{lem:n-mono_epi_chara} and definitions.

\begin{cor}\label{cor:defl_chara}
Let $f\colon L\to N$ be a morphism in $\dem{\Lambda}$. Then the following hold:
\begin{enumerate}
\item $f$ is a $1$-monomorphism if and only if it is a usual monomorphism in $\dem{\Lambda}$.
\item The following are equivalent:
\begin{enumerate}
	\item $f$ is a $d$-monomorphism,
	\item $H^{-d+1}f$ is a monomorphism in $\mod H^0\Lambda$,
	\item the cone $\Cone f$ of $f$ belongs to $\dem{\Lambda}$,
	\item $f$ is an inflation.
\end{enumerate}
\end{enumerate}
\end{cor}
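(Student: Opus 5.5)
Part (1) follows from Lemma~\ref{lem:n-mono_epi_chara} with $n=1$. By the equivalence of (1) and (2) there, $f$ is a $1$-monomorphism exactly when $H^{\le 0}(\Cocone f)=0$; since $\Cocone f\in\pvd\Lambda$ has $H^{>0}=0$ automatically, this means $\Cocone f\cong 0$, i.e.\ $f$ is an isomorphism. That looks too strong, so I will recheck the index convention. With $n=1$, the definition requires $H^if$ to be an isomorphism for $i\le -1$ and $H^0f$ to be injective. Using the cone instead, the triangle $L\to N\to \Cone f$ and the long exact sequence show that these conditions are equivalent to $H^{\le -1}(\Cone f)=0$, i.e.\ $\Cone f\in(\pvd\Lambda)^{\ge 0}$. (Statement (2) of Lemma~\ref{lem:n-mono_epi_chara} has a shift convention I will reconcile with this in the same way.)

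To compare with categorical monomorphisms, take any $X\in\dem{\Lambda}$ and apply $\pvd\Lambda(X,-)$ to the triangle. This gives the exact sequence $\pvd\Lambda(X,\Cone f[-1])\to\pvd\Lambda(X,L)\to\pvd\Lambda(X,N)$. If $\Cone f[-1]\in(\pvd\Lambda)^{\ge 1}$, then the first term vanishes because $X\in(\pvd\Lambda)^{\le 0}$, so $f$ is monic. For the converse, I use the truncation of the cocone. Put $C=\Cone f[-1]$ and $X=\tau^{\le 0}C$. This $X$ has cohomology in degrees $[-d+1,0]$: indeed $H^iC$ sits between $H^{i-1}N$ and $H^iL$, which both vanish for $i\le -d$. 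Hence $X\in\dem{\Lambda}$. The composite $X\to C\to L$ is killed by $f$, so if $f$ is monic this composite is $0$. Then $X\to C$ factors through $N[-1]\to C$. Applying $\tau^{\le0}$ and using that $X\to C$ is the truncation map, we obtain a retraction argument forcing $X=0$. The main subtlety is this retraction step. It is cleanest via the long exact sequence: $f\circ(X\to L)=0$ gives $X\to C$ lifting through $N[-1]$, and I will then argue by comparing $H^i$ in degrees $\le0$. Alternatively, I can test against $X=\Lambda$ via Lemma~\ref{lem:homology_functor} and induct on degree using shifts of projectives that lie in $\dem{\Lambda}$.

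For part (2), (a)$\Leftrightarrow$(b) is the definition: with $n=d$, the conditions on $H^{i}f$ for $i\le -d$ are vacuous, since both sides vanish. For (b)$\Leftrightarrow$(c), use the long exact sequence of $L\to N\to\Cone f$. We have $H^{i}(\Cone f)=0$ for $i>0$ because $H^1L=0$. For $i\le -d$, we have $H^{i}(\Cone f)=0$ if and only if $H^{-d+1}L\to H^{-d+1}N$ is injective, since $H^{\le -d}N=0$. Properness is automatic, so this gives the equivalence. For (c)$\Leftrightarrow$(d), use the extriangulated structure stated in the Proposition above: conflations are triangles in $\pvd\Lambda$ with all terms in $\dem{\Lambda}$. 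Thus $f$ is an inflation exactly when it fits into such a triangle. Because cones are unique up to isomorphism, this holds if and only if $\Cone f\in\dem{\Lambda}$.
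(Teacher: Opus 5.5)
Your part (2) is correct and follows the paper's route: the paper simply cites Lemma~\ref{lem:n-mono_epi_chara}, which packages the long exact sequence computation you do by hand. The implication ``$1$-monomorphism $\Rightarrow$ monomorphism'' in part (1) is also correct.

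The gap is in the converse of (1), at the step you flag yourself. Factoring the truncation map $X=\tau^{\le 0}(\Cocone f)\to\Cocone f$ through $N[-1]\to\Cocone f$ only shows that $X$ is a direct summand of $\tau^{\le 0}(N[-1])$. That object has cohomology $H^{-d+1}N,\dots,H^{-1}N$ placed in degrees $-d+2,\dots,0$. It vanishes when $d=1$, so your argument recovers the classical case, but it does not vanish in general.

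Testing against $\Lambda$ and shifted projectives does not help either. Monicity only ever yields injectivity statements about the maps $H^if$. It never gives the surjectivity of $H^if$ for $i\le -1$ that a $1$-monomorphism requires.

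In fact the step cannot be repaired, because the implication is false for $d\ge 2$. Take a simple $H^0\Lambda$-module $S$; then $S[1]\in\dem{\Lambda}$. The map $f\colon 0\to S[1]$ is a monomorphism in $\dem{\Lambda}$. Yet $H^{-1}f\colon 0\to S$ is not an isomorphism, so $f$ is not a $1$-monomorphism; equivalently, $\Cocone f\cong S$ has $H^0\neq 0$. Already for $\Lambda=k$, the category $\dem{k}$ is the category of graded vector spaces in degrees $[-d+1,0]$, and its monomorphisms are exactly the degreewise injections.

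The paper's proof has the same hole. It claims that $f$ is a monomorphism in $\dem{\Lambda}$ if and only if $\pvd\Lambda(X,\Cocone f)=0$ for all $X\in\dem{\Lambda}$. Only ``if'' holds, because the image of $\pvd\Lambda(X,N[-1])\to\pvd\Lambda(X,\Cocone f)$ need not vanish.

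What is true is that $f$ is a $1$-monomorphism if and only if $\pvd\Lambda(X,\Cocone f)=0$ for every $X\in\dem{\Lambda}$. Your choice $X=\tau^{\le 0}(\Cocone f)\in\dem{\Lambda}$ proves this at once. For monomorphisms in the homotopy category, only the implication from $1$-monomorphisms holds. That is the direction the paper actually uses later, for example for the uniqueness of the lifts $g_*'$ following Lemma~\ref{lem:non_retraction_chara}.

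Finally, there is no index problem in Lemma~\ref{lem:n-mono_epi_chara}(2). The slip in your first attempt was assuming $H^{>0}(\Cocone f)=0$ automatically, whereas in general $H^1(\Cocone f)\cong\Cok(H^0f)$.
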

\begin{proof}
We only prove $(1)$. $(2)$ is a direct consequence of Lemma~\ref{lem:n-mono_epi_chara}. Take a cocone $\Cocone f$ of $f$ in $\pvd\Lambda$. Then $f$ is a monomorphism in $\dem{\Lambda}$ if and only if $\pvd\Lambda(X,\Cocone f)=0$ for any $X\in \dem{\Lambda}$ and this is equivalent to $H^{\leq 0}(\Cocone f)=0$. Thus, by Lemma~\ref{lem:n-mono_epi_chara}, we obtain the assertion.
\end{proof}

\begin{cor}\label{cor:balanced}
Let $f\colon M\to N$ be a morphism in $\dem{\Lambda}$. If $f$ is a $d$-epimorphism and a $1$-monomorphism, then $f$ is an isomorphism. Dually, if $f$ is a $1$-epimorphism and a $d$-monomorphism, then $f$ is an isomorphism.
\end{cor}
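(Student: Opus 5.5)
The plan is to work with cohomology long exact sequences, using the characterization of $n$-monomorphisms and $n$-epimorphisms through cones and cocones in Lemma~\ref{lem:n-mono_epi_chara}. Suppose $f\colon M\to N$ is a $d$-epimorphism and a $1$-monomorphism, and fix a triangle
\[
\Cocone f\to M\xrightarrow{f} N\to \Cone f
\]
in $\pvd\Lambda$, with $\Cone f\cong(\Cocone f)[1]$.

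First I unpack the two hypotheses.

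\begin{enumerate}
\item Applying Lemma~\ref{lem:n-mono_epi_chara} with $n=1$, being a $1$-monomorphism gives $H^{i}(\Cocone f)=0$ for all $i\leq 0$.
\item Applying the dual statement with $n=d$, being a $d$-epimorphism gives $H^{i}(\Cone f)=0$ for all $i\geq 0$.
\item Since $H^{i}(\Cone f)=H^{i+1}(\Cocone f)$, item 2 says $H^{i}(\Cocone f)=0$ for all $i\geq 1$.
\end{enumerate}

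Combining items 1 and 3, all cohomology of $\Cocone f$ vanishes. Because $\pvd\Lambda$ sits inside $\der{\Lambda}$, where an object with zero cohomology is zero, we get $\Cocone f=0$. Hence $f$ is an isomorphism in $\pvd\Lambda$, and therefore in the full subcategory $\dem{\Lambda}$.

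If one prefers to avoid the lemma, the same conclusion follows directly from the definitions. Being a $d$-epimorphism means $H^{i}f$ is an isomorphism for $i\geq 1$ (vacuous here) and $H^{0}f$ is an epimorphism. Being a $1$-monomorphism means $H^{i}f$ is an isomorphism for all $i\leq -1$ and $H^{0}f$ is a monomorphism. So every $H^{i}f$ is an isomorphism, and $f$ is a quasi-isomorphism.

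The dual statement is handled symmetrically. Suppose $f$ is a $1$-epimorphism and a $d$-monomorphism.

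\begin{enumerate}
\item Being a $1$-epimorphism gives $H^{i}(\Cone f)=0$ for $i\geq -d+1$.
\item Being a $d$-monomorphism gives $H^{i}(\Cocone f)=0$ for $i\leq -d+1$, that is, $H^{i}(\Cone f)=0$ for $i\leq -d$.
\end{enumerate}

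Together these give $\Cone f=0$, so $f$ is an isomorphism. Alternatively, one can apply the $k$-dual functor $D_d$, which exchanges the two notions and reduces to the first statement.

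The only step needing care is the index bookkeeping: matching the ranges in Lemma~\ref{lem:n-mono_epi_chara} and checking that the two vanishing ranges really cover all degrees. I would verify this directly on cohomology, as in the elementary argument above.
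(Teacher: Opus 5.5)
Your proposal is correct and follows the paper's route: the paper says only that the claim is immediate from Lemma~\ref{lem:n-mono_epi_chara}, and your cone/cocone vanishing argument, with the index ranges checked correctly for both the statement and its dual, is exactly the unpacking of that remark. Your alternative argument straight from the definitions, showing that every $H^i f$ is an isomorphism, is equally valid.
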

\begin{proof}
This is immediate from Lemma~\ref{lem:n-mono_epi_chara}.
\end{proof}

Since $d$-epimorphisms in $\dem{\Lambda}$ can be characterized by deflations, we have the following lemmas:
\begin{lemma}\label{lem_chara_proj_inj}
 For any object $X$ in $\dem{\Lambda}$, the following hold:
\begin{enumerate}
\item $X$ is projective in $\dem{\Lambda}$ if and only if the functor $\hom{\Lambda}{X,-}\colon \dem{\Lambda}\to \dem{k}$ sends $d$-epimorphisms to $d$-epimorphisms.
\item $X$ is injective in $\dem{\Lambda}$ if and only if the functor $\hom{\Lambda}{-,X}\colon \dem{\Lambda}\to \dem{k}$ sends $d$-monomorphisms to $d$-epimorphisms.
\end{enumerate}
\end{lemma}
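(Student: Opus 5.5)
The plan is to reduce both statements to the characterization of deflations and inflations in Corollary~\ref{cor:defl_chara}, together with the long exact sequences of the hom complexes. For (1), start from the definition: $X$ is projective in $\dem{\Lambda}$ exactly when $\BE(X,-)=0$, i.e.\ $\pvd\Lambda(X,L[1])=0$ for all $L\in\dem{\Lambda}$. Equivalently, every deflation $g\colon M\to N$ induces a surjection $\dem{\Lambda}(X,M)\to\dem{\Lambda}(X,N)$.

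The dual of Corollary~\ref{cor:defl_chara}(2) says that deflations are precisely the $d$-epimorphisms, namely those $g$ with $H^0g$ surjective. On the other side, a morphism $\phi$ in $\dem{k}$ is a $d$-epimorphism iff $H^0\phi$ is surjective. By Remark~\ref{rmk:0-th_hom}, $H^0$ of $g_*\colon\hom{\Lambda}{X,M}\to\hom{\Lambda}{X,N}$ is the map $\dem{\Lambda}(X,M)\to\dem{\Lambda}(X,N)$. So the condition ``$\hom{\Lambda}{X,-}$ sends $d$-epimorphisms to $d$-epimorphisms'' is literally ``$\dem{\Lambda}(X,-)$ is surjective on deflations''.

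It remains to show that this is equivalent to projectivity.
\begin{itemize}
\item If $X$ is projective, apply $\pvd\Lambda(X,-)$ to the conflation $K\to M\to N\dashrightarrow$. The connecting term $\pvd\Lambda(X,K[1])=\BE(X,K)$ vanishes, so surjectivity follows.
\item Conversely, take a deflation $P\to X$ with $P\in\MP_\Lambda$, which exists since there are enough projectives. Lifting $\mathrm{id}_X$ along it shows that $X$ is a summand of $P$, hence projective.
\end{itemize}

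Part (2) is dual. Injectivity of $X$ is equivalent to $\dem{\Lambda}(-,X)$ turning inflations into surjections. Inflations are the $d$-monomorphisms by Corollary~\ref{cor:defl_chara}(2). For $f\colon L\to N$, the map $f^*\colon\hom{\Lambda}{N,X}\to\hom{\Lambda}{L,X}$ is a $d$-epimorphism in $\dem{k}$ iff $H^0f^*$ is surjective. For the converse, use an inflation $X\to I$ with $I\in\MI_\Lambda$, available from the enough-injectives statement.

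The one point to check carefully is the identification of $d$-epimorphisms in $\dem{k}$ (with $n=d$) with $H^0$-surjective maps, i.e.\ the range conventions in the definition. For $n=d$, the definition requires isomorphisms in degrees $\geq 1$, which is vacuous, plus an epimorphism in degree $0$, so this is immediate. Everything else is formal.
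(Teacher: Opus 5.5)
Your proposal is correct and follows essentially the paper's argument. You use Remark~\ref{rmk:0-th_hom} and the fact that a $d$-epimorphism in $\dem{k}$ is just an $H^0$-surjection to turn the condition into surjectivity of $\dem{\Lambda}(X,-)$ on deflations, i.e.\ on $d$-epimorphisms by (the dual of) Corollary~\ref{cor:defl_chara}, and then conclude from the notion of projective object in an extriangulated category; the only difference is that you also spell out why this lifting property is equivalent to $\BE(X,-)=0$, which the paper simply takes as the definition.
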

\begin{proof}
We show the first assertion. By Remark~\ref{rmk:0-th_hom} and Corollary~\ref{cor:defl_chara}, the morphism $f_*\colon \hom{\Lambda}{X,L}\to \hom{\Lambda}{X,N}$ is a $d$-epimorphism if and only if the morphism 
$$f\circ -\colon \dem{\Lambda}(X,L)\to \dem{\Lambda}(X,N)$$
is surjective for any $f\colon L\to N$ in $\dem{\Lambda}$. Thus, the assertion follows from the definition of projective objects in extriangulated categories and Corollary~\ref{cor:defl_chara}. 
\end{proof}

Next, we introduce the notions of tops, radicals, socles, and coradicals in $\dem{\Lambda}$ as in usual module categories.

\begin{definition}\label{dfn:top_radical}
The top of $M\in\dem{\Lambda}$, denoted $\top M$, is defined as the top of $H^0 M$ in $\mod H^0\Lambda\subset \pvd\Lambda$. The radical of $M$ is defined as the cocone of the composite morphism $M \to H^0(M) \to \top M$. Note that there exists a conflation in $\dem{\Lambda}$ as follows:
$$\rad M \to M \to \top M \dashrightarrow$$

Dually, the socle of $M\in\dem{\Lambda}$, denoted $\soc M$, is defined as the socle of $H^{-d+1} M$ in $\mod H^0\Lambda\subset \pvd\Lambda$. The coradical of $M$ is defined as the cone of the canonical morphism $\soc M \to M$. Note that there exists a conflation in $\dem{\Lambda}$ as follows:
$$\soc M \to M \to \corad M \dashrightarrow$$
\end{definition}

\begin{remark}\label{rmk:rad_1_mono}
By the definition of radicals, the morphism $\rad M\to M$ is a $1$-monomorphism for any $M\in\dem{\Lambda}$. Dually, the morphism $M\to \corad M$ is a $1$-epimorphism for any $M\in\dem{\Lambda}$.
\end{remark}




By definition of kernels and cokernels, there exists a conflation $\Ker f\to L \to N\dashrightarrow$ if $f\colon L\to N$ is a $d$-epimorphism. Dually, there exists a conflation $L\to N\to \Cok f\dashrightarrow$ if $f\colon L\to N$ is a $d$-monomorphism. These properties can be regarded as a generalization of those of abelian categories.

The following proposition describes the relationship between cokernels and $[d,1]$-factorizations as in cokernels and epi-mono factorizations in abelian categories.

\begin{lemma}
The category $\dem{\Lambda}$ is a hom-finite Krull-Schmidt category.
\end{lemma}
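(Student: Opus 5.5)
Two properties need to be checked: hom-finiteness and the Krull--Schmidt property.

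\emph{Hom-finiteness.} For $L,N\in\dem{\Lambda}$, Remark~\ref{rmk:0-th_hom} gives $\dem{\Lambda}(L,N)=\pvd\Lambda(L,N)$. This space is $H^0\bigl(\rhom{\Lambda}{L,N}\bigr)$, so it suffices to bound it. We argue by dévissage along the standard $t$-structure of Remark~\ref{rmk:t-str}. Each object of $\dem{\Lambda}$ has cohomology in the finitely many degrees $[-d+1,0]$. Hence, using the truncation triangles $\tau^{\leq i-1}M\to\tau^{\leq i}M\to H^i(M)[-i]\to$, it is a finite iterated extension of shifts of modules $H^i(M)\in\mod H^0\Lambda$. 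Each such module is finite-dimensional, since $\Lambda$ is proper. Applying the cohomological functors $\pvd\Lambda(-,N)$ and $\pvd\Lambda(X,-)$ to these triangles, finiteness of $\dim_k\pvd\Lambda(L,N)$ reduces to finiteness of $\pvd\Lambda(S,T[j])$ for $S,T\in\mod H^0\Lambda$ and fixed $j$.

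For this last step, take a projective resolution of $S$ in $\dem{\Lambda}$ via the conflations \eqref{eq:syzygy}, or equivalently build $S$ in $\der{\Lambda}$ from $\add\Lambda$ through a Postnikov tower whose stages in any bounded range of degrees involve only finitely many copies of $\Lambda$; this uses that $H^*(\Lambda)$ is finite-dimensional and $\Lambda$ connective. Since $T[j]$ has bounded cohomology, only finitely many stages contribute to $\pvd\Lambda(S,T[j])$. Each stage contributes $\pvd\Lambda(\Lambda^{m}[s],T[j])=H^{j-s}(T)^{m}$, which is finite-dimensional. Alternatively, one can argue by the equivalent inductive route: first $\pvd\Lambda(P,X)$ is finite-dimensional for $P\in\add\Lambda$ by Lemma~\ref{lem:homology_functor}; then induct on $j$ using the syzygy triangles together with the isomorphisms of Proposition~\ref{prop:ext}.

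\emph{Krull--Schmidt.} A hom-finite additive $k$-category is Krull--Schmidt as soon as it is idempotent complete. In that case every object decomposes into finitely many indecomposables, because the dimension of the endomorphism algebra bounds the number of summands. Moreover, each indecomposable then has a finite-dimensional local endomorphism ring, since a finite-dimensional algebra with no nontrivial idempotents is local.

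So it remains to show idempotent completeness. The category $\pvd\Lambda$ is a thick subcategory of $\der{\Lambda}$, and $\der{\Lambda}$ has countable coproducts, hence split idempotents. A direct summand of an object with finite-dimensional total cohomology again has finite-dimensional total cohomology, and so lies in $\pvd\Lambda$. It also lies in $\dem{\Lambda}$, because $H^i$ of a summand is a summand of $H^i$, so the vanishing conditions are inherited. Thus $\dem{\Lambda}$ is closed under direct summands in the idempotent-complete category $\der{\Lambda}$, and is therefore idempotent complete.

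The main obstacle is the hom-finiteness reduction, specifically controlling $\pvd\Lambda(S,T[j])$ for modules $S,T$. I would handle it by the inductive route via Proposition~\ref{prop:ext} and Lemma~\ref{lem:homology_functor}, with finitely generated syzygies, rather than by the Postnikov tower.
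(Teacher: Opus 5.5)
Your proof is correct, but it takes a more self-contained route than the paper's. The paper's proof is two lines. It cites \cite[Proposition~6.12]{MR3898985} for the fact that $\pvd\Lambda$ is hom-finite with split idempotents, hence Krull--Schmidt, and then notes that $\dem{\Lambda}$ is closed under direct summands in $\pvd\Lambda$.

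Your idempotent-completeness step is the same closure-under-summands observation. The only change is the ambient category: you use $\der{\Lambda}$, which has split idempotents by B\"okstedt--Neeman since it has countable coproducts. The genuinely new part is that you prove hom-finiteness by hand instead of citing it.

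Of your two routes, the inductive one is the better choice. It can be streamlined so that neither the d\'evissage nor Proposition~\ref{prop:ext} is needed. For $X\in\dem{\Lambda}$ and $T\in\pvd\Lambda$, the conflation $\Omega X\to P\to X\dashrightarrow$ with $P\in\add\Lambda$ gives
$$\dim_k\pvd\Lambda(X,T[j])\le\dim_k\pvd\Lambda(\Omega X,T[j-1])+\dim_k\pvd\Lambda(P,T[j]).$$
The last term is finite by Lemma~\ref{lem:homology_functor}. So induction on $j$ finishes the argument, run uniformly over all $X\in\dem{\Lambda}$ and starting from $j\ll 0$, where the group vanishes by the $t$-structure.

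If you instead chain the isomorphisms of Proposition~\ref{prop:ext}, the chain must end in $\pvd\Lambda(\Omega^jS,T)$ with $T$ in the heart. There this space equals $\mod H^0\Lambda\bigl(H^0(\Omega^jS),T\bigr)$, which is finite-dimensional. Ending instead in a general Hom space of $\dem{\Lambda}$ would be circular.

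The Postnikov-tower alternative would also need a Milnor-colimit argument to justify that only finitely many stages contribute, so setting it aside was right.

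The paper's route buys brevity and gives the Krull--Schmidt property for all of $\pvd\Lambda$. Yours gives an elementary proof that isolates the actual inputs: properness, the standard $t$-structure, and enough projectives in $\add\Lambda$.
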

\begin{proof}
Since $\pvd\Lambda$ is a hom-finite triangulated category with split idempotents \cite[Proposition~6.12]{MR3898985}, $\pvd\Lambda$ is a Krull-Schmidt category. Thus, its full subcategory $\dem{\Lambda}$ is also a hom-finite Krull-Schmidt category because it is closed under direct summands in $\pvd\Lambda$.
\end{proof}

In the rest of this subsection, we briefly recall the Auslander-Reiten theory of $d$-extended module categories established in \cite{moc2025b}.

\begin{definition}
Let $f\colon M\to N$ be a morphism in $\dem{\Lambda}$. 
\begin{itemize}
	\item $f$ is \emph{left minimal} if any morphism $g\colon N\to N$ satisfying $g\circ f=f$ is an isomorphism. Dually, $f$ is \emph{right minimal} if any morphism $g\colon M\to M$ satisfying $f\circ g=f$ is an isomorphism. 
	\item $f$ is \emph{left almost split} if for any morphism $h\colon M\to L$ which is not a section, there exists a morphism $h'\colon N\to L$ such that $h'=h\circ f$. Dually, $f$ is right almost split if for any morphism $h\colon L\to N$ which is not a retraction, there exists a morphism $h'\colon L\to M$ such that $h=h'\circ f$. 
	\item $f$ is a \emph{sink morphism} (resp. \emph{source morphism}) if it is a right minimal and right almost split (resp. left minimal and left almost split) morphism.
\end{itemize}
\end{definition}

\begin{definition}[{\cite[Theorem~2.9]{iya2024}}]
A conflation $L\xrightarrow{f} M\xrightarrow{g} N\dashrightarrow$ in $\dem{\Lambda}$ is called an \emph{Auslander-Reiten conflation} if it satisfies any of the following equivalent conditions:
\begin{itemize}
	\item The morphism $f$ is a source morphism.
	\item The morphism $f$ is a left almost split morphism and $N$ is indecomposable.
	\item The morphism $g$ is a sink morphism.
	\item The morphism $g$ is a right almost split morphism and $L$ is indecomposable.
\end{itemize}
\end{definition}

\begin{theorem}[{\cite[Corollary~1.3]{moc2025b}}]\label{thm:existance_AR_seq}
The category $\dem{\Lambda}$ has Auslander-Reiten conflations, that is, for any indecomposable non-projective object $N\in\dem{\Lambda}$, there exists an Auslander-Reiten conflation with end term $N$. Dually, for any indecomposable non-injective object $L\in\dem{\Lambda}$, there exists an Auslander-Reiten conflation with starting term $L$.
\end{theorem}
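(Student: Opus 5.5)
The plan is to reduce the statement to the general criterion of Iyama--Nakaoka--Palu \cite{iya2024}. In a Hom-finite, Ext-finite Krull--Schmidt extriangulated category with enough projectives and injectives, the existence of Auslander--Reiten conflations is equivalent to an \emph{Auslander--Reiten--Serre duality}. Concretely, we need a functor $\tau$ from the stable category to the costable category together with natural isomorphisms
$$D\,\BE(N,-)\cong \overline{\dem{\Lambda}}(-,\tau N),$$
where $D$ is the $k$-dual and $\overline{\dem{\Lambda}}$ denotes morphisms modulo those factoring through $\MI_\Lambda$, plus the dual statement. The hypotheses are already in place:
\begin{itemize}
\item $\dem{\Lambda}$ is Hom-finite and Krull--Schmidt (the last lemma above);
\item by Proposition~\ref{prop:ext}, $\BE(N,L)\cong\pvd\Lambda(N,L[1])$ is finite-dimensional;
\item there are enough projectives $\MP_\Lambda$ and enough injectives $\MI_\Lambda$.
\end{itemize}
So the real content is constructing $\tau$ and proving the duality formula.

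\textbf{Construction of $\tau$.} I would build $\tau$ from the derived Nakayama functor $\nu=-\lox_\Lambda D\Lambda\colon\per\Lambda\to\pvd\Lambda$. It satisfies the Serre-type isomorphism
$$D\,\pvd\Lambda(P,X)\cong\pvd\Lambda(X,\nu P)$$
for $P\in\per\Lambda$ and $X\in\der{\Lambda}$. For an indecomposable non-projective $N$, iterate the syzygy conflations \eqref{eq:syzygy}. This gives a perfect complex $P_N$, built from $P_d\to\cdots\to P_0$, with a morphism $P_N\to N$ whose cocone has cohomology concentrated in degrees $\le -d$. Taking minimal projective covers makes $P_N$ unique up to isomorphism. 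I then set
$$\tau N:=\tau^{\ge -d+1}\tau^{\le 0}\bigl(\nu P_N[-1]\bigr),$$
possibly after adjusting the shift so that the result lies in $\dem{\Lambda}$. For $d=1$ this should recover $D\operatorname{Tr}$ computed from a minimal projective presentation.

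\textbf{The duality.} The formula would follow an Auslander defect-formula argument. For $L\in\dem{\Lambda}$, apply $\pvd\Lambda(-,L[1])$ to the triangle $\Cocone\to P_N\to N\to$. Since $\pvd\Lambda(P,L[>0])=0$ for $P\in\MP_\Lambda$, this identifies $\BE(N,L)$ with a cokernel of maps between the spaces $\pvd\Lambda(P_i,L[j])$. Dualizing and applying the Serre isomorphism turns this into maps $L\to\nu P_N[\ast]$. The truncation adjunctions then replace $\nu P_N[\ast]$ by $\tau N$, because $L$ has cohomology in $[-d+1,0]$. Finally, one checks that the image of the maps from the $P_i$-terms is exactly the subspace of maps $L\to\tau N$ factoring through injectives, using $\MI_\Lambda=\add D_d\Lambda=\add\nu\Lambda[d-1]$.

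\textbf{Main obstacle.} I expect the hardest step to be this last identification. Two things must be checked: that the truncations do not lose information in degrees $-d+1,\dots,0$, and that the kernel is precisely the injectively trivial maps, not something larger. I would first try to verify it by a degree count on the long exact sequences, using minimality of $P_N$ to rule out extra summands. If that fails, I would fall back on constructing the Auslander--Reiten conflation directly: choose a nonzero element of the simple socle of the finite-length functor $\BE(N,-)$, which is finite-length by Hom-finiteness and has a simple socle after the duality. The dual statement, giving conflations starting at non-injective $L$, follows by applying $D_d$ and working over $\Lambda\op$.
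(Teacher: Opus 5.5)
The paper only cites this result, so your proposal is judged on its own terms. The overall route is sound, and the first two steps are fine. Since $\pvd\Lambda(\tau^{\le -d}P_N,L)=0$, you get an embedding $\BE(N,L)\hookrightarrow\pvd\Lambda(P_N,L[1])$; note this is a kernel before dualizing, not a cokernel. Serre duality together with $L\in(\pvd\Lambda)^{\le 0}$ then gives $D\pvd\Lambda(P_N,L[1])\cong\pvd\Lambda(L,\tau N)$, naturally in $L$. No shift adjustment is needed: $\nu P_N[-1]\in\MK^{[0,d]}(\MI_\Lambda)$ has cohomology in $[-d+1,d]$, so $\tau^{\le 0}$ already lands in $\dem{\Lambda}$.

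The gap is the step you flag as the main obstacle, and the tools you propose do not reach it. What must be identified is the annihilator of $\BE(N,L)$ in $\pvd\Lambda(L,\tau N)$, which is the image of $D\pvd\Lambda(\tau^{\le -d}P_N,L[1])$.
\begin{itemize}
\item Since $\tau^{\le -d}P_N$ is not perfect, Serre duality does not apply to it, and no degree count on the terms $P_i$ detects it.
\item Minimality of $P_N$ plays no role: extra $\MP_\Lambda[d]$-summands only add injective summands to $\tau N$.
\item Your fallback via the socle of $\BE(N,-)$ already presupposes the duality.
\end{itemize}
The missing idea is to use enough injectives on the side of $L$. Take a conflation $L\xrightarrow{a}I\to L'\dashrightarrow$ with $I\in\MI_\Lambda$. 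Because $\BE(L',\MI_\Lambda)=0$, every map from $L$ to an injective factors through $a$. So the injectively trivial maps $L\to\tau N$ are exactly $\mathrm{im}(a^*)$. Under the natural perfect pairing, its annihilator is $\ker\bigl(a[1]_*\colon\pvd\Lambda(P_N,L[1])\to\pvd\Lambda(P_N,I[1])\bigr)$.

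This kernel equals $\BE(N,L)$. It contains $\BE(N,L)$ because $\BE(N,I)=0$. Conversely, if $a[1]\circ x=0$, then $x$ factors as $P_N\xrightarrow{y}L'\to L[1]$. The map $y$ vanishes on $\tau^{\le -d}P_N$ because $L'\in(\pvd\Lambda)^{\ge -d+1}$, so $x$ factors through $P_N\to N$. Hence $D\BE(N,-)\cong\overline{\dem{\Lambda}}(-,\tau N)$.

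A further point concerns the citation. The Iyama--Nakaoka--Palu duality has the form $\underline{\mathcal{C}}(A,B)\cong D\BE(B,\tau A)$ with $\tau$ a functor. That is the other Auslander--Reiten formula, so quoting their theorem would also require functoriality of $\tau$. This is obtainable from Proposition~\ref{prop:proj_inj_present} and the Nakayama equivalence, but you can bypass it:
\begin{itemize}
\item Since $N$ is non-projective, $\BE(N,-)\neq 0$, so $\tau N$ has an indecomposable non-injective summand $X$.
\item Choose a functional on $\overline{\dem{\Lambda}}(X,\tau N)$ that kills the radical maps but not the split inclusion. This is possible because $X$ is not injective.
\item It corresponds to a nonzero $\delta\in\BE(N,X)$ annihilated by every non-section $X\to L'$.
\item So the realizing conflation has a left almost split inflation and indecomposable end term $N$, hence is an Auslander--Reiten conflation.
\end{itemize}
The dual half then follows via $D_d$ and $\Lambda\op$, as you say.
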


\begin{proposition}[{\cite[Proposition~3.6]{moc2025b}}]
For any indecomposable projective object $P\in\dem{\Lambda}$, the morphism $\rad P\to P$ is a sink morphism. Dually, for any indecomposable injective object $I\in\dem{\Lambda}$, the morphism $I\to \corad I$ is a source morphism.
\end{proposition}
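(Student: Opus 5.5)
The plan is to verify the two defining properties of a sink morphism directly for the canonical morphism $\iota\colon \rad P\to P$, using the conflation $\rad P\xrightarrow{\iota} P\xrightarrow{\pi}\top P\dashrightarrow$ from Definition~\ref{dfn:top_radical}. The injective statement will then follow by duality.

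\emph{Right minimality.} By Remark~\ref{rmk:rad_1_mono}, $\iota$ is a $1$-monomorphism. By Corollary~\ref{cor:defl_chara}(1), it is therefore a monomorphism in $\dem{\Lambda}$. So if $g\colon\rad P\to\rad P$ satisfies $\iota g=\iota$, then $g=\mathrm{id}$, and in particular $g$ is an isomorphism.

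\emph{Right almost split.} Let $h\colon L\to P$ be a morphism in $\dem{\Lambda}$ that is not a retraction.
\begin{enumerate}
\item Applying $\dem{\Lambda}(L,-)$ to the conflation gives an exact sequence $\dem{\Lambda}(L,\rad P)\to\dem{\Lambda}(L,P)\to\dem{\Lambda}(L,\top P)$. Hence it suffices to show $\pi h=0$.
\item Since $P\in\add\Lambda$ is indecomposable, Lemma~\ref{lem:homology_functor} shows that $\End(P)\cong\End_{H^0\Lambda}(H^0P)$. So $H^0P$ is an indecomposable projective $H^0\Lambda$-module and $\top P$ is simple.
\item The object $\top P$ lies in the heart. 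Hence any morphism $L\to\top P$ factors through $L\to H^0L=\tau^{\geq0}L$, and $\pi h$ is determined by the module map $H^0L\xrightarrow{H^0h}H^0P\to\top P$.
\item Suppose $\pi h\neq 0$. Because $\top P$ is simple, this composite is surjective, so the image of $H^0h$ is not contained in $\rad H^0P$. By Nakayama's lemma, $H^0h$ is surjective.
\item Since $H^0P$ is projective, $H^0h$ splits: there is $s\colon H^0P\to H^0L$ with $H^0h\circ s=\mathrm{id}$.
\item By Lemma~\ref{lem:homology_functor}, $s$ is $H^0(s')$ for a unique $s'\colon P\to L$. Applying the same lemma to $\pvd\Lambda(P,P)$ gives $hs'=\mathrm{id}_P$, since $H^0(hs')=\mathrm{id}$.
\item This makes $h$ a retraction, which is a contradiction. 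Hence $\pi h=0$, and $h$ factors through $\iota$.
\end{enumerate}

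\emph{The dual statement.} Apply the duality $D_d\colon(\dem{\Lambda})\op\to\dem{\Lambda\op}$. This equivalence sends injectives of $\dem{\Lambda}$ to projectives of $\dem{\Lambda\op}$, because $\MI_\Lambda=\add D_d\Lambda$. It exchanges $\soc$ with $\top$ and $\corad$ with $\rad$, since $D$ exchanges $H^{-d+1}$ and $H^0$ after the shift $[d-1]$. It also turns sink morphisms into source morphisms. So the statement for $I\to\corad I$ follows from the first part applied over $\Lambda\op$.

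The main point needing care is step 3: that maps from $L\in\dem{\Lambda}$ into a module in the heart factor through $H^0L$. This holds by the $t$-structure adjunction for $\tau^{\geq0}$. The only other delicate point is checking that $D_d$ really matches radicals with coradicals.
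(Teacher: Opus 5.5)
Your proof is correct. The paper gives no argument of its own for this statement; it only quotes \cite[Proposition~3.6]{moc2025b}. So there is no in-text route to compare against, and what you have written is a self-contained verification that uses only tools already in the paper.

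Right minimality follows at once from $\iota$ being a monomorphism (Corollary~\ref{cor:defl_chara}(1)). For right almost splitness you combine three ingredients:
\begin{enumerate}
\item the exact sequence coming from the conflation;
\item the $t$-structure adjunction $\pvd\Lambda(L,\top P)\cong\mod H^0\Lambda(H^0L,\top P)$;
\item Lemma~\ref{lem:homology_functor}.
\end{enumerate}
Together these reduce the claim to the classical fact that a morphism into an indecomposable projective $H^0\Lambda$-module either lands in its radical or is a split epimorphism.

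The duality step is also sound. $D_d$ is an exact duality, so it exchanges projectives with injectives, sections with retractions, and left minimality with right minimality. It sends the truncation $\tau^{\leq -d+1}M\to M$ to $D_dM\to\tau^{\geq 0}D_dM$. Hence it carries $\soc M\to M$ to $D_dM\to\top D_dM$, and therefore $M\to\corad M$ to $\rad D_dM\to D_dM$, as you claim.

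One small addition for completeness: under the usual convention a right almost split morphism must not itself be a retraction. Here that holds because $\pi\neq 0$, since $\top P\neq 0$, although the paper's definition does not require it.
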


\subsection{The notion of presentations and copresentations}

\begin{definition}\label{dfn:presentations}
For any positive integer $n$, we define the \emph{category of $(n+1)$-term projective presentations} $\MK^{[-n,0]}(\MP_\Lambda)$ to be the full subcategory of $\per\Lambda\subset \pvd\Lambda$ as follows:
$$\MK^{[-n,0]}(\MP_{\Lambda}):=\MP_{\Lambda} *\MP_{\Lambda}[1]*\cdots *\MP_{\Lambda}[n]\subset\per\Lambda\subset\pvd\Lambda,$$
and also denote by $\MK^{[-n,0]}(\MP_{\Lambda}\cap\MI_{\Lambda})$ the full subcategory of $\MK^{[-n,0]}(\MP_\Lambda)$ given by the full subcategory:
$$\MK^{[-n,0]}(\MP_{\Lambda}\cap\MI_{\Lambda}):=(\MP_{\Lambda}\cap\MI_{\Lambda})*(\MP_{\Lambda}\cap\MI_{\Lambda})[1]*\cdots *(\MP_{\Lambda}\cap \MI_{\Lambda})[n],$$
we call it the \emph{category of $(n+1)$-term projective-injective presentations}. If $n=d$, we simply call the category $\MK^{[-d,0]}(\MP_\Lambda)$ the \emph{category of projective presentations}.
\end{definition}

We also introduce the dual notion.

\begin{definition}
For any $n\in\mathbb{Z}_{\geq 0}$, we define the \emph{category of $(n+1)$-term injective copresentations} $\MK^{[0,n]}(\MI_\Lambda)$ to be the full subcategory of $\per\Lambda\subset \pvd\Lambda$ as follows:
$$\MK^{[0,n]}(\MI_{\Lambda}):=\MI_{\Lambda}[-n]*\MI_{\Lambda}[-n+1]*\cdots *\MI_{\Lambda}\subset\per\Lambda\subset\pvd\Lambda,$$
and also denote by $\MK^{[0,n]}(\MP\cap\MI)(\Lambda)$ the full subcategory of $\MK^{[0,n]}(\MI_\Lambda)$ given by the full subcategory:
$$\MK^{[0,n]}(\MP_\Lambda\cap\MI_\Lambda):=(\MP_\Lambda\cap\MI_{\Lambda})[-n]*(\MP_\Lambda\cap\MI_{\Lambda})[-n+1]*\cdots *(\MP_\Lambda\cap \MI_{\Lambda}),$$
we call it the \emph{category of $(n+1)$-term projective-injective copresentations}. If $n=d$, we simply call the category $\MK^{[0,d]}(\MI_\Lambda)$ the \emph{category of injective copresentations}.
\end{definition}

\begin{remark}
We note the following properties of objects in $\MK^{[-d,0]}(\MP_\Lambda)$ and $\MK^{[0,d]}(\MI_\Lambda)$:
\begin{itemize}
	\item Since $\Lambda$ satisfies $H^{>0}(\Lambda[i])=0$ and $H^{<-2d+1}(\Lambda[i])=0$ for any $0\leq i\leq d$, we have $H^{>0} (P^\bullet)=0$ and $H^{<-2d+1} (P^\bullet)=0$ for all $P^\bullet\in \MK^{[-d,0]}(\MP_\Lambda)$. Thus, $\tau^{> d} P^\bullet$ and $(\tau^{\leq d} P^\bullet)[-d]$ are in $\dem{\Lambda}$.
	\item Dually, we also have $H^{>d} I^\bullet=0$ and $H^{<-d+1} I^\bullet=0$ for all $I^\bullet\in \MK^{[0,d]}(\MI_\Lambda)$. Thus, $\tau^{\leq 0} I^\bullet$ and $(\tau^{> 0} I^\bullet)[d]$ are in $\dem{\Lambda}$.
\end{itemize}
\end{remark}

\begin{remark}
The derived Nakayama functor $-\lox_{\Lambda}D\Lambda\colon \RD(\Lambda)\to \RD(\Lambda)$ induces an equivalence functor $-\lox_\Lambda D\Lambda\colon \RK^{[-d,0]}(\MP_\Lambda)\to \RK^{[0,d]}(\MI_\Lambda)$ since the following holds in $\pvd\Lambda$:
$$\Lambda\lox_{\Lambda}D\Lambda\cong D\Lambda\cong (D_d \Lambda)[-d].$$
\end{remark}

\begin{definition}\label{dfn:proj_pres_obj}
Let $N$ be an object in $\dem{\Lambda}$. A \emph{projective presentation} of $N$ is an object $P^\bullet\in \MK^{[-d,0]}(\MP_\Lambda)$ such that $\tau^{> -d} P^\bullet \cong N$ in $\dem{\Lambda}$. Dually, an \emph{injective copresentation} of $N$ is an object $I^\bullet\in \MK^{[0,d]}(\MI_\Lambda)$ such that $\tau^{\leq 0} I^\bullet \cong N$ in $\dem{\Lambda}$.
\end{definition}

\begin{proposition}[{\cite[Theorem~2.5]{moc2025b}}]\label{prop:proj_inj_present}
Consider functors $\tau^{> d}\colon \MK^{[-d,0]}(\MP_\Lambda)\to \dem{\Lambda}$ and $\tau^{\leq 0}\colon \MK^{[0,d]}(\MI_\Lambda)\to \dem{\Lambda}$. Then these functors induce equivalences
$$\MK^{[-d,0]}(\MP_\Lambda)/\MP_\Lambda[d]\xrightarrow{\sim} \dem{\Lambda} \quad\text{and}\quad\MK^{[0,d]}(\MI_\Lambda)/\MI_\Lambda[-d]\xrightarrow{\sim} \dem{\Lambda}$$
where $\MK^{[-d,0]}(\MP_\Lambda)/\MP_\Lambda[d]$ is the ideal quotient of $\MK^{[-d,0]}(\MP_\Lambda)$ by the ideal consisting of morphisms factoring through objects in $\MP_\Lambda[d]$ and $\MK^{[0,d]}(\MI_\Lambda)/\MI_\Lambda[-d]$ is defined similarly.
\end{proposition}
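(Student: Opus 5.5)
The plan is to check directly that $F:=\tau^{>-d}\colon \MK^{[-d,0]}(\MP_\Lambda)\to\dem{\Lambda}$ is well defined, kills the ideal $[\MP_\Lambda[d]]$, and that the induced functor is full, faithful and dense. (The functor in the statement is written $\tau^{>d}$; it should be read as $\tau^{>-d}$, as in Definition~\ref{dfn:proj_pres_obj}.) The injective version then follows by applying the duality $D_d$, which exchanges $\MP_{\Lambda}$ and $\MI_{\Lambda\op}$, or by repeating the argument dually.

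Throughout, the basic tool is the isomorphism
$$\pvd\Lambda(\Lambda[i],Y)\cong H^{-i}(Y).$$
Together with the fact that every $P^\bullet\in\MK^{[-d,0]}(\MP_\Lambda)$ lies in $\MP_\Lambda*\cdots*\MP_\Lambda[d]$, it gives the following vanishing statement.

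\emph{Vanishing statement.} Let $P'\in\MP_\Lambda*\cdots*\MP_\Lambda[m]$ and let $Y\in(\pvd\Lambda)^{\leq -m-1}$. Then $\pvd\Lambda(P',Y)=0$.

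\emph{Well-definedness.} By the preceding Remark, $H^{>0}(P^\bullet)=0$, so $\tau^{>-d}P^\bullet\in\dem{\Lambda}$. For $Q\in\MP_\Lambda$, the object $Q[d]$ has cohomology only in degrees $\leq -d$, so $\tau^{>-d}(Q[d])=0$. Hence $F$ annihilates every morphism factoring through $\MP_\Lambda[d]$ and descends to the ideal quotient.

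\emph{Fullness.} Let $P^\bullet,Q^\bullet\in\MK^{[-d,0]}(\MP_\Lambda)$ and let $f\colon \tau^{>-d}P^\bullet\to\tau^{>-d}Q^\bullet$. Use the truncation triangle
$$\tau^{\leq -d}Q^\bullet\to Q^\bullet\to\tau^{>-d}Q^\bullet\to(\tau^{\leq -d}Q^\bullet)[1].$$
The composite $P^\bullet\to\tau^{>-d}P^\bullet\xrightarrow{f}\tau^{>-d}Q^\bullet$ lifts to a map $P^\bullet\to Q^\bullet$ exactly when its obstruction in $\pvd\Lambda\bigl(P^\bullet,(\tau^{\leq -d}Q^\bullet)[1]\bigr)$ vanishes. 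This group is zero by the vanishing statement with $m=d$, because $(\tau^{\leq -d}Q^\bullet)[1]$ lies in degrees $\leq -d-1$. The resulting lift $g$ satisfies $F(g)=f$ by functoriality of truncation.

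\emph{Faithfulness.} Suppose $g\colon P^\bullet\to Q^\bullet$ satisfies $\tau^{>-d}g=0$. Then $g$ composed with $Q^\bullet\to\tau^{>-d}Q^\bullet$ is zero, so $g$ factors through $\tau^{\leq -d}Q^\bullet$. Write $P^\bullet$ via a triangle
$$P'\to P^\bullet\to P_d[d]\to P'[1],\qquad P'\in\MP_\Lambda*\cdots*\MP_\Lambda[d-1],\quad P_d\in\MP_\Lambda,$$
which is possible by associativity of $*$. By the vanishing statement with $m=d-1$, $\pvd\Lambda(P',\tau^{\leq -d}Q^\bullet)=0$. Therefore $P^\bullet\to\tau^{\leq -d}Q^\bullet$ factors through $P_d[d]$, and so $g$ lies in the ideal generated by $\MP_\Lambda[d]$.

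\emph{Density.} This is the step I expect to be the main bookkeeping obstacle. Given $N\in\dem{\Lambda}$, I would construct inductively $P^{(j)}\in\MP_\Lambda*\cdots*\MP_\Lambda[j]$ together with maps $p_j\colon P^{(j)}\to N$ whose cone $C_j$ satisfies $H^{\geq -j}(C_j)=0$.

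For $j=0$, choose $P_0\in\MP_\Lambda$ with $H^0(P_0)\to H^0(N)$ surjective. Such a map exists and lifts to $P_0\to N$ by Lemma~\ref{lem:homology_functor}. Then $H^{\geq 0}(C_0)=0$.

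For the inductive step, choose $P_{j+1}\in\MP_\Lambda$ with a map $P_{j+1}[j]\to C_j$ that is surjective on $H^{-j-1}$. This map exists by the shifted version of Lemma~\ref{lem:homology_functor}. Its composite $P_{j+1}[j]\to C_j\to P^{(j)}[1]$ defines $P^{(j+1)}$ as a cocone, and then $P^{(j+1)}\in\MP_\Lambda*\cdots*\MP_\Lambda[j+1]$. The octahedral axiom gives a map $P^{(j+1)}\to N$ whose cone $C_{j+1}$ is the cone of $P_{j+1}[j]\to C_j$. A long exact sequence check then shows $H^{\geq -j-1}(C_{j+1})=0$.

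At $j=d$, the map $P^\bullet:=P^{(d)}\to N$ induces isomorphisms on $H^{>-d}$. Since $H^{\leq -d}(N)=0$, this yields $\tau^{>-d}P^\bullet\cong N$, completing density and hence the equivalence.
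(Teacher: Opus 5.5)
The paper gives no argument of its own for this statement; it imports it from \cite[Theorem~2.5]{moc2025b}. Your proposal is a correct, self-contained direct proof, up to one index slip described below. You rightly read $\tau^{>d}$ as $\tau^{>-d}$. Your single vanishing statement $\pvd\Lambda\bigl(\MP_\Lambda*\cdots*\MP_\Lambda[m],(\pvd\Lambda)^{\leq -m-1}\bigr)=0$ does all the work:
\begin{itemize}
\item[(a)] with $m=d$ it kills the lifting obstruction in $\pvd\Lambda\bigl(P^\bullet,(\tau^{\leq -d}Q^\bullet)[1]\bigr)$, which gives fullness;
\item[(b)] with $m=d-1$ it forces $P^\bullet\to\tau^{\leq -d}Q^\bullet$ to factor through the last layer $P_d[d]$, which gives faithfulness modulo the ideal;
\item[(c)] together with $\tau^{>-d}(\MP_\Lambda[d])=0$ this identifies the kernel exactly with the ideal.
\end{itemize}
Compared with the citation, your route has the advantage of never using $d$-truncatedness. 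It needs only connectivity, properness and $\pvd\Lambda(\Lambda[i],-)\cong H^{-i}$.

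The slip is in the density step: the new projective must be shifted by $j+1$, not $j$. Since $H^{\geq -j}(C_j)=0$, Lemma~\ref{lem:homology_functor} gives $\pvd\Lambda(P_{j+1}[j],C_j)\cong\mathrm{Hom}_{H^0\Lambda}(H^0P_{j+1},H^{-j}C_j)=0$. So every map $P_{j+1}[j]\to C_j$ is zero and cannot be surjective on $H^{-j-1}$. Instead, take $q\colon P_{j+1}[j+1]\to C_j$ corresponding under $\pvd\Lambda(P_{j+1}[j+1],C_j)\cong\mathrm{Hom}_{H^0\Lambda}(H^0P_{j+1},H^{-j-1}C_j)$ to a surjection. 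Then:
\begin{itemize}
\item[(a)] the cocone of $P_{j+1}[j+1]\xrightarrow{q}C_j\to P^{(j)}[1]$ lies in $(\MP_\Lambda*\cdots*\MP_\Lambda[j])*\MP_\Lambda[j+1]$;
\item[(b)] the octahedral axiom identifies $C_{j+1}$ with $\Cone q$;
\item[(c)] the long exact sequence of $P_{j+1}[j+1]\to C_j\to C_{j+1}\to P_{j+1}[j+2]$ gives $H^{\geq -j-1}(C_{j+1})=0$, using $H^{\geq 1}(P_{j+1})=0$ and the surjectivity of $H^{-j-1}(q)$.
\end{itemize}
With this correction the rest of your density argument, including the conclusion at $j=d$, goes through as written.

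For the injective half, a small precision. $D_d$ is only defined on $\dem{\Lambda}$, whereas objects of $\MK^{[-d,0]}(\MP_\Lambda)$ have cohomology in $[-2d+1,0]$. The functor to use is $D(-)[d-1]\colon\pvd(\Lambda\op)\op\to\pvd\Lambda$. It does the following:
\begin{itemize}
\item[(a)] sends $\MK^{[-d,0]}(\MP_{\Lambda\op})$ onto $\MK^{[0,d]}(\MI_\Lambda)$, with the order of the $*$-factors reversed;
\item[(b)] sends $\MP_{\Lambda\op}[d]$ onto $\MI_\Lambda[-d]$;
\item[(c)] intertwines $\tau^{>-d}$ with $\tau^{\leq 0}$.
\end{itemize}
So the projective statement for $\Lambda\op$ yields the injective statement for $\Lambda$.
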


\begin{lemma}\label{lem:proj_pres}
The following hold:
\begin{enumerate}
\item Let $P^\bullet\in \MK^{[-d,0]}(\MP_\Lambda)$. Then $\pvd\Lambda(P^\bullet, M[i])=0$ for any $M\in\dem{\Lambda}$ and $i\geq d+1$. 
\item Let $I^\bullet\in \MK^{[0,d]}(\MI_\Lambda)$. Then $\pvd\Lambda(M, I^\bullet[i])=0$ for any $M\in\dem{\Lambda}$ and $i\geq d+1$.
\end{enumerate}
\end{lemma}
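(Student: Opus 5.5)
Both parts reduce to a dévissage along the defining filtrations of $\MK^{[-d,0]}(\MP_\Lambda)$ and $\MK^{[0,d]}(\MI_\Lambda)$, together with the vanishing of $\pvd\Lambda(P,M[j])$ for $j>0$ and $\pvd\Lambda(M,I[j])$ for $j>0$. We prove (1) directly and obtain (2) by the dual argument.

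For (1), by definition $P^\bullet$ lies in $\MP_\Lambda*\MP_\Lambda[1]*\cdots*\MP_\Lambda[d]$, so $P^\bullet$ is built from $P_0,P_1[1],\dots,P_d[d]$ with $P_j\in\MP_\Lambda$ by iterated triangles. The functor $\pvd\Lambda(-,M[i])$ is cohomological. Hence it suffices to show $\pvd\Lambda(P_j[j],M[i])=0$ for each $0\leq j\leq d$ and $i\geq d+1$; vanishing on the outer terms of a triangle forces vanishing on the middle term, and induction on the number of factors in the $*$-product finishes the argument.

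Now $\pvd\Lambda(P_j[j],M[i])\cong\pvd\Lambda(P_j,M[i-j])$, and here $i-j\geq d+1-d=1$. Since $P_j\in\add\Lambda$, it suffices to treat $P_j=\Lambda$, where $\pvd\Lambda(\Lambda,M[i-j])\cong H^{i-j}(M)$. This vanishes because $M\in\dem{\Lambda}$ has $H^{>0}(M)=0$. Equivalently, this is the vanishing $\pvd\Lambda(P,L[>0])=0$ already used in the proof of Proposition~\ref{prop:ext}.

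For (2), take $I^\bullet\in\MI_\Lambda[-d]*\cdots*\MI_\Lambda$. By the same dévissage with the cohomological functor $\pvd\Lambda(M,-[i])$, it suffices to show $\pvd\Lambda(M,I_j[-j+i])=0$ for $0\leq j\leq d$ and $i\geq d+1$, that is, $\pvd\Lambda(M,I[m])=0$ for $I\in\MI_\Lambda$ and $m\geq1$. Take $I=D_d\Lambda$ and apply the duality $D$: we get
$$\pvd\Lambda(M,D(\Lambda)[d-1+m])\cong\pvd(\Lambda\op)\bigl(\Lambda,(DM)[-d+1-m]\bigr)\cong H^{-d+1-m}(DM)\cong D\,H^{d-1+m}(M).$$
This is zero since $d-1+m>0$.

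The only point needing care is getting the shift conventions right in the $*$-filtration and in $D_d$. The required index ranges are exactly where the cohomology of $M$ vanishes, so there is no genuine obstacle.
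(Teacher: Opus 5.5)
Your approach is the paper's own: d\'evissage along the $*$-filtration down to shifted generators. Your proof of (1) is exactly the paper's argument. Part (2), however, contains a shift error, and as a result you invoke the wrong vanishing.

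Since $D$ is contravariant, $D(X[n])\cong (DX)[-n]$. Hence $D\bigl((D\Lambda)[d-1+m]\bigr)\cong\Lambda[-d+1-m]$, and the correct chain is
\[
\pvd\Lambda\bigl(M,(D\Lambda)[d-1+m]\bigr)\cong\pvd(\Lambda\op)\bigl(\Lambda[-d+1-m],DM\bigr)\cong H^{d-1+m}(DM)\cong D\,H^{-d+1-m}(M).
\]
Your identification with $\pvd(\Lambda\op)\bigl(\Lambda,(DM)[-d+1-m]\bigr)\cong D\,H^{d-1+m}(M)$ is wrong. It is a formal identity in $M\in\pvd\Lambda$ and $m$, yet it fails for $\Lambda=k$, $d=2$, $M=k[1]$, $m=0$: the left side is $\pvd k(k[1],k[1])=k$, while your right side is $D\,H^{1}(k[1])=0$. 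The vanishing you actually need is $H^{-d+1-m}(M)=0$ for $m\geq 1$, that is, $H^{\leq -d}(M)=0$. This holds because $M\in\dem{\Lambda}$; it does not follow from $H^{>0}(M)=0$.

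The error is not cosmetic. Statement (1) uses only the upper bound $H^{>0}(M)=0$, whereas (2) genuinely needs the lower bound defining $\dem{\Lambda}$. For $d=1$, $\Lambda=k$, $I^\bullet=k$ and $M=k[5]$, we have $H^{>0}(M)=0$ but $\pvd k(M,I^\bullet[5])\neq 0$. So no argument relying only on $H^{>0}(M)=0$, as yours does, can prove (2).

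With the sign corrected, your d\'evissage goes through. Alternatively, you can do what the paper's ``dual argument'' amounts to. The functor $D_d=D(-)[d-1]$ carries $\MK^{[0,d]}(\MI_\Lambda)$ into $\MK^{[-d,0]}(\MP_{\Lambda\op})$ and $\dem{\Lambda}$ onto $\dem{\Lambda\op}$. It gives $\pvd\Lambda(M,I^\bullet[i])\cong\pvd(\Lambda\op)\bigl(D_dI^\bullet,(D_dM)[i]\bigr)$, so (2) follows from (1) applied to $\Lambda\op$. There the input $H^{>0}(D_dM)=0$ is precisely $H^{\leq -d}(M)=0$.
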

\begin{proof}
We only show the first statement. The second assertion follows by a dual argument. Let $i\geq d+1$ be an integer. Since $P^\bullet$ belongs to $\add\Lambda *\add\Lambda[1]*\cdots *\add\Lambda[d]$, it suffices to show that $\pvd\Lambda(\Lambda[j], M[i])=0$ for any $0\leq j\leq d$. This is clear since $H^{> 0} M=0$.
\end{proof}

The following proposition gives a connection between presentations and homological dimensions of $\dem{\Lambda}$ as an extriangulated category.

\begin{proposition}\label{prop:inductive_cocone}
For any $N\in\dem{\Lambda}$ and full subcategory $\MC\subset\dem{\Lambda}$, the following are equivalent:
\begin{enumerate}
\item There exist conflations in $\dem{\Lambda}$ as follows:
$$L^{0}\xrightarrow{m^{0}} M^{0} \xrightarrow{e^{0}} N\dashrightarrow;$$
$$L^{-1}\xrightarrow{m^{-1}} M^{-1} \xrightarrow{e^{-1}} L^0\dashrightarrow;$$
$$\vdots$$
$$L^{-d}\xrightarrow{m^{-d}} M^{-d} \xrightarrow{e^{-d}} L^{-d+1}\dashrightarrow$$
with $M^i\in\MC$ for any $0\leq i\leq d$.
\item There exists an object $M^\bullet\in \MC*\MC[1]*\cdots * \MC[d]\subset \pvd\Lambda$ such that $\tau^{> -d} M^\bullet \cong N$ in $\dem{\Lambda}$.
\end{enumerate}
\end{proposition}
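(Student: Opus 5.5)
The plan is to translate conflations into triangles in $\pvd\Lambda$ via Corollary~\ref{cor:defl_chara}, and then to splice or unsplice them with the octahedral axiom. Throughout, $X*Y$ denotes the class of objects $E$ admitting a triangle $X\to E\to Y\to X[1]$, and the operation $*$ is associative. A degree count will identify the relevant triangles with truncation triangles; the facts needed are that objects of $\dem{\Lambda}$ have cohomology in $[-d+1,0]$ and that the $t$-structure is the one from Remark~\ref{rmk:t-str}.

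For $(1)\Rightarrow(2)$, I will construct inductively, for $0\le j\le d$, objects
$$Y_j\in \MC*\MC[1]*\cdots*\MC[j]$$
together with triangles $L^{-j}[j]\to Y_j\to N\to L^{-j}[j+1]$. Start with $Y_0=M^0$ and the triangle given by the first conflation. For the inductive step, take the composite
$$M^{-j-1}[j]\xrightarrow{e^{-j-1}[j]}L^{-j}[j]\to Y_j$$
and let $Y_{j+1}$ be its cone. The octahedral axiom then gives two triangles. The first is $Y_j\to Y_{j+1}\to M^{-j-1}[j+1]\to$, so $Y_{j+1}\in Y_j*\MC[j+1]$. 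The second is $L^{-j-1}[j+1]\to Y_{j+1}\to N\to$, using that the cone of $e^{-j-1}[j]$ is $L^{-j-1}[j+1]$ and the cone of $L^{-j}[j]\to Y_j$ is $N$. Set $M^\bullet:=Y_d$. In the triangle $L^{-d}[d]\to M^\bullet\to N\to$, the object $L^{-d}[d]$ lies in $(\pvd\Lambda)^{\le -d}$ and $N$ lies in $(\pvd\Lambda)^{>-d}$. Hence this is the truncation triangle, and $\tau^{>-d}M^\bullet\cong N$.

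For $(2)\Rightarrow(1)$, I will induct on the length by peeling off the first layer, proving a statement general enough to feed into itself. Namely: for $0\le j\le d$ and any $Y\in\MC*\cdots*\MC[j]$, the object $\tau^{>-d}Y$ admits conflations as in (1), where the last kernel is $(\tau^{\le -d}Y)[-j]\in\dem{\Lambda}$, and trivial steps may be appended when $j<d$. Given $M^\bullet$, choose a triangle $M^0\to M^\bullet\to Z[1]\to$ with $M^0\in\MC$ and $Z\in\MC*\cdots*\MC[d-1]$. Let $e^0$ be the composite $M^0\to M^\bullet\to\tau^{>-d}M^\bullet=N$, and set $L^0:=\Cocone e^0$. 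Applying the octahedral axiom to $M^0\to M^\bullet\to N$ yields a triangle
$$\tau^{\le -d}M^\bullet\to Z[1]\to \Cone(e^0)\to,$$
that is, $\tau^{\le-d}M^\bullet[-1]\to Z\to L^0\to$. From the long exact cohomology sequence I will check that $L^0\cong\tau^{>-d}Z$ and that $L^0\in\dem{\Lambda}$. By Corollary~\ref{cor:defl_chara}, $e^0$ is then a deflation, which gives the first conflation. The remaining conflations come from the induction hypothesis applied to $Z$.

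The main obstacle is this degree bookkeeping in $(2)\Rightarrow(1)$. One must check that $H^{-d+1}(e^0)$ is surjective, so that the cocone has no cohomology in degree $-d$ and hence lies in $\dem{\Lambda}$. One must also check that $\tau^{\le-d}M^\bullet[-1]$ really is $\tau^{\le -d}Z$. Since objects of $\MC[i]$ have cohomology in $[-d+1-i,-i]$, I would first verify that $H^{-d}$ and $H^{-d+1}$ fit correctly into the long exact sequence, using that $H^{>0}(M^0[\,\cdot\,])$ vanishes in the relevant range. If this direct approach gets tangled, the fallback is to run the $(1)\Rightarrow(2)$ octahedra backwards: take $L^{-j}:=(\tau^{\le -d}Y_j)[-j]$ for the partial objects $Y_j$ from the filtration of $M^\bullet$, and read the conflations off from the triangles $Y_{j}\to Y_{j+1}\to M^{-j-1}[j+1]$.
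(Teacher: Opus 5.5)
The direction $(1)\Rightarrow(2)$ is correct. The gap is in $(2)\Rightarrow(1)$. The octahedral triangle $(\tau^{\le -d}M^\bullet)[-1]\to Z\to L^0\to$ is \emph{not} a truncation triangle: $(\tau^{\le -d}M^\bullet)[-1]$ has cohomology up to degree $-d+1$, which overlaps with that of $L^0$. Its long exact sequence gives $0\to H^{-d}(M^\bullet)\to H^{-d+1}(Z)\to H^{-d+1}(L^0)\to 0$. So in general neither $L^0\cong\tau^{>-d}Z$ nor $(\tau^{\le-d}M^\bullet)[-1]\cong\tau^{\le -d}Z$ holds.

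Already for $d=1$, $\MC=\MP_\Lambda$ and $M^\bullet=(P^{-1}\xrightarrow{\partial}P^0)$ with $\partial=0$, you get $L^0=\mathrm{Im}\,\partial=0$, whereas $\tau^{>-1}Z=P^{-1}$. Your induction hypothesis only concerns $\tau^{>-d}Z$, so it gives nothing about $L^0$, and the induction does not close.

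The fallback has the same defect. $(\tau^{\le -d}Y_j)[-j]$ is the correct kernel only for $j=d$; for $d=1$, $j=0$ it gives $L^0=0$.

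The degree check you single out is also misplaced. $H^{-d}(L^0)=0$ is automatic, since $H^{-d-1}(N)=0=H^{-d}(M^0)$. Moreover, $H^{-d+1}(e^0)$ need not be surjective: take $\Lambda=k$, $d=2$, $\MC=\add k$, $M^\bullet=k[1]$; then $H^{-1}(M^0)=0\neq H^{-1}(N)$. The only non-automatic condition for $L^0\in\dem{\Lambda}$ is $H^1(L^0)=\Cok H^0(e^0)=0$.

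The repair is to induct on the invariant your $(1)\Rightarrow(2)$ argument already produces. Claim: for $0\le j\le d$, if $Y\in\MC*\cdots*\MC[j]$ admits a triangle $L[j]\to Y\to N\to L[j+1]$ with $L,N\in\dem{\Lambda}$, then $N$ admits $j+1$ conflations as in (1) ending with kernel $L$.

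The case $j=d$, applied to $\tau^{\le -d}M^\bullet\to M^\bullet\to N\to$, gives the proposition. This applies because $\tau^{\le-d}M^\bullet$ has cohomology in $[-2d+1,-d]$, so it lies in $\dem{\Lambda}[d]$.

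In the inductive step, your own octahedron on $M^0\to Y\to N$ yields $L[j-1]\to Z\to L^0\to L[j]$, which has the same shape. Also $L^0\in\dem{\Lambda}$, since both maps below are surjective:
\begin{itemize}
\item $H^0(M^0)\to H^0(Y)$ is onto because $H^1(Z)=0$;
\item $H^0(Y)\to H^0(N)$ is onto because $H^{j+1}(L)=0$.
\end{itemize}
Equivalently, in your fallback take $L^{-j}:=\Cocone(Y_j\to M^\bullet\to N)[-j]$ instead of $(\tau^{\le -d}Y_j)[-j]$.
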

\begin{proof}
The proof is similar to \cite[Lemma~2,7]{moc2025b}.
\end{proof}

\section{\texorpdfstring{Auslander correspondence for $d$-truncated DG-algebras}{Auslander correspondence for d-truncated DG-algebras}}\label{sec:auslander}

\subsection{\texorpdfstring{Homological dimensions of $d$-truncated DG-algebras}{Homological dimensions of d-truncated DG-algebras}}

In this subsection, we discuss homological dimensions of $d$-truncated DG-algebras. Throughout this subsection, we fix a $d$-truncated proper connective DG-algebra $\Lambda$. 

\begin{definition}
Let $\MT$ be a triangulated category and $\MS$ be a full subcategory of $\MT$. We denote by $\langle \MS \rangle$ the smallest full subcategory of $\MT$ which contains $\MS$ and is closed under isomorphisms, finite direct sums, and extensions. 
\end{definition}

\begin{lemma}\label{lem:simple_modd}
The following equation holds as a full subcategory of $\pvd\Lambda$:
$$\dem{\Lambda}=\langle\add \top\Lambda\rangle*\langle\add \top\Lambda\rangle[-1]*\cdots *\langle\add \top\Lambda\rangle[-d+1]$$
\end{lemma}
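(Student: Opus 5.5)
We prove the two inclusions separately.

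For ``$\supseteq$'': note that $\top\Lambda$ is a semisimple $H^0(\Lambda)$-module, viewed in the heart $\mod H^0(\Lambda)$ of the $t$-structure of Remark~\ref{rmk:t-str}. So $\langle\add\top\Lambda\rangle[-j]$ consists of objects whose cohomology is concentrated in degree $j$. For $0\le j\le d-1$ this degree lies in $[-d+1,0]$. The category of objects with cohomology in $[-d+1,0]$ is closed under extensions in $\pvd\Lambda$, by the long exact cohomology sequence. Hence every object of the iterated extension $\langle\add \top\Lambda\rangle*\cdots*\langle\add\top\Lambda\rangle[-d+1]$ lies in $\dem{\Lambda}$. (Finite dimensionality of total cohomology is automatic from finiteness of each piece.)

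For ``$\subseteq$'': take $M\in\dem{\Lambda}$ and filter it by its standard truncation triangles. Iterating
$$\tau^{\le -1}M\to M\to H^0(M)\to (\tau^{\le -1}M)[1],$$
and continuing with $\tau^{\le -1}M$, shows that $M$ lies in
$$\{H^{0}M\}*\{H^{-1}M[1]\}*\cdots*\{H^{-d+1}M[d-1]\}$$
in some order. Here I identify $H^{i}M$ with an object of the heart $\mod H^0\Lambda$, so that the actual cohomology piece is $H^iM[-i]$. We must arrange the order of the $*$ to match the statement. The truncation triangle $\tau^{\le i-1}M\to\tau^{\le i}M\to H^i(M)[-i]\to$ puts $\tau^{\le i}M\in (\tau^{\le i-1}M)*(H^iM[-i])$. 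Iterating from $i=-d+1$ up to $0$ gives
$$M\in H^{-d+1}M[d-1]*\cdots*H^{0}M.$$
Thus the lower-degree pieces appear on the left, exactly as in the statement. I will double-check this against the convention for $X*Y$ (triangles $X\to E\to Y\to$) and the shifts written in the lemma. If a sign or order mismatch arises, I will use instead that a semisimple composition series is available in any order, as explained next.

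It then remains to show that each module $N\in\mod H^0(\Lambda)$, viewed as an object of the heart, lies in $\langle\add\top\Lambda\rangle$. Since $\Lambda$ is proper, $H^0(\Lambda)$ is finite dimensional and $N$ has a finite composition series with simple factors. Every simple $H^0(\Lambda)$-module is a summand of $\top H^0(\Lambda)=\top\Lambda$. Short exact sequences in the heart are triangles in $\pvd\Lambda$, so $N$ is an iterated extension of objects of $\add\top\Lambda$. Shifting by $[-j]$ places $H^{-j}$-type pieces into $\langle\add\top\Lambda\rangle[-j]$, and then $M$ lies in the required product.

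The only delicate point is matching the order of the $*$-product with the shift indices. Since $\langle\add\top\Lambda\rangle$ is extension closed, it suffices to check that the truncation filtration is compatible with the stated order, which I expect to be routine bookkeeping with the $t$-structure of Remark~\ref{rmk:t-str}.
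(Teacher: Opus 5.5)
Your route is the paper's: its proof is the one line ``filter $\dem{\Lambda}$ by shifted copies of the heart $\mod H^0\Lambda$ and identify $\mod H^0\Lambda=\langle\add\top\Lambda\rangle$''. Your computation $M\in H^{-d+1}M[d-1]*\cdots*H^0M$ and your composition-series argument are correct. The gap is the bookkeeping you postpone. It cannot be completed, because the identity is false as printed.

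In the ``$\supseteq$'' part you assert that degree $j\in[0,d-1]$ lies in $[-d+1,0]$, which is false. Since $H^i(X[n])=H^{i+n}(X)$, the factor $\langle\add\top\Lambda\rangle[-j]$ is concentrated in degree $j\ge 0$. So the printed right-hand side is the class of objects with cohomology in $[0,d-1]$, namely $\dem{\Lambda}[-d+1]$. For $d\ge 2$ the object $\top\Lambda[-1]$ lies in it but not in $\dem{\Lambda}$.

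Likewise, your filtration does not come out ``exactly as in the statement''. Its factors carry the shifts $[d-1],\dots,[1],[0]$ rather than $[0],[-1],\dots,[-d+1]$. The degree $-j$ piece $H^{-j}M[j]$ lies in $\langle\add\top\Lambda\rangle[j]$, not in $\langle\add\top\Lambda\rangle[-j]$. So the statement, and the paper's proof (which writes $\mod H^0\Lambda*\mod H^0\Lambda[-1]*\cdots$), contain a sign slip. What is true, and what your argument proves once the signs are fixed, is
\[
\dem{\Lambda}=\langle\add\top\Lambda\rangle[d-1]*\cdots*\langle\add\top\Lambda\rangle[1]*\langle\add\top\Lambda\rangle .
\]
Here ``$\supseteq$'' holds because $\langle\add\top\Lambda\rangle[j]$ is concentrated in degree $-j\in[-d+1,0]$ and objects with cohomology in $[-d+1,0]$ are closed under extensions. ``$\subseteq$'' is your truncation argument.

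Your fallback of reordering a composition series cannot repair a mismatch of this kind. Reordering is harmless inside a single level, because $\langle\add\top\Lambda\rangle$ is extension-closed. But the order of distinct shift levels is forced, with lower degrees on the left; here $X*Y$ is given by triangles $X\to E\to Y\to X[1]$, as in $\MK^{[-n,0]}(\MP_\Lambda)=\MP_\Lambda*\cdots*\MP_\Lambda[n]$.

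For example, let $d=2$ and $\Lambda=k[\epsilon]/(\epsilon^2)$ with $\deg\epsilon=-1$ and zero differential, and put $\mathcal{S}=\langle\add\top\Lambda\rangle\simeq\mod k$. Then $\Lambda\in\dem{\Lambda}$ is indecomposable, since its endomorphism ring is $H^0\Lambda=k$, and $H^0\Lambda=H^{-1}\Lambda=k$. The truncation triangle shows $\Lambda\in\mathcal{S}[1]*\mathcal{S}$. However, $\Lambda\notin\mathcal{S}*\mathcal{S}[1]$. An object of $\mathcal{S}*\mathcal{S}[1]$ is the cocone of a map $Y\to X[1]$ with $X,Y[-1]\in\mod k$, that is, of a morphism of vector spaces. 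Such a cocone splits as $K[1]\oplus C$ with $K,C\in\mod k$, which is impossible for $\Lambda$.

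You should therefore state and prove the corrected version explicitly. The slip is harmless for the lemma's only use, Proposition~\ref{prop:global_dim}, which needs only that $\dem{\Lambda}$ is the extension closure of the objects $\top\Lambda[j]$ with $0\le j\le d-1$.
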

\begin{proof}
Since $\mod H^0(\MA)$ corresponds to $\langle\add \top\Lambda\rangle$ in $\pvd\Lambda$, we have the following equation:
\begin{align*}
\dem{\Lambda}=&\mod H^0\Lambda * \mod H^0(\Lambda)[-1]*\cdots *\mod H^0\Lambda[-d+1]\\
=&\langle\add \top\Lambda\rangle*\langle\add \top\Lambda\rangle[-1]*\cdots *\langle\add \top\Lambda\rangle[-d+1]\qedhere
\end{align*}
\end{proof}

We define the global dimension of DG-algebras. The definition of global dimension is well-known. However, we here adopt the definition given in \cite{tom2025} and \cite{min2021} since it is more suitable for our purpose.

\begin{definition}[{\cite[Proposition-Definition~3.3.]{tom2025}, \cite[Theorem~3.1.]{min2021}}]\label{dfn:global dimension}
Let $n$ be a positive integer. We say that \emph{$\Lambda$ has global dimension at most $n$}, denoted $\gldim \Lambda\leq n$, if the following holds.
$$\BE^{> n}(\top\Lambda,\top\Lambda)\cong\pvd\Lambda(\top\Lambda,\top\Lambda[>n])=0$$
(the first equality follows from Proposition~\ref{prop:ext}). 
We define the \emph{global dimension} $\gldim \Lambda$ of $\Lambda$ as the infimum of the set of integers $n$ such that $\gldim \Lambda\leq n$. If there is no such integer, we set $\gldim \Lambda=\infty$.
\end{definition}

\begin{remark}
For an extriangulated category $(\MC,\BE)$, the global dimension $\gldim \MC$ of $(\MC,\BE)$ is defined as the supremum of the set of integers $n$ such that there exist objects $M,N\in\MC$ with $\BE^n(M,N)\neq 0$ (see \cite[Definition~3.28.]{gor2021} for details). In general, global dimension of $\dem{\Lambda}$ as extriangulated category does not coincide with the global dimension of $d$-truncated DG-algebras defined in Definition~\ref{dfn:global dimension} except in the case $d=1$. However, these two notions of global dimension are closely related as in Proposition~\ref{prop:global_dim}.
\end{remark}

\begin{proposition}\label{prop:global_dim}
The equation $\gldim \Lambda = \gldim (\dem{\Lambda})-d+1$ holds. Moreover, if either of these is infinite, then the other is also infinite.
\end{proposition}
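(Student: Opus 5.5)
The plan is to compare both global dimensions through one graded quantity: the vanishing of $\pvd\Lambda(\top\Lambda,\top\Lambda[k])$ for large $k$. Write $S:=\top\Lambda$. First I would rewrite $\gldim\Lambda$ as
$$\gldim\Lambda=\sup\{k\mid \pvd\Lambda(S,S[k])\neq0\}.$$
This holds for two reasons. For $k<0$ we have $\pvd\Lambda(S,S[k])=0$, because $S$ lies in the heart of the $t$-structure of Remark~\ref{rmk:t-str}. And $\pvd\Lambda(S,S)\neq0$ when $\Lambda\neq0$. So the infimum in Definition~\ref{dfn:global dimension} is attained exactly at the largest $k$ with a nonzero space, and that supremum is $\geq0$. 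By Proposition~\ref{prop:ext}, $\gldim(\dem{\Lambda})$ is the supremum of those $m$ for which $\pvd\Lambda(M,N[m])\neq0$ for some $M,N\in\dem{\Lambda}$ (with $m\ge1$, and value $0$ if there are none).

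For the upper bound, suppose $\gldim\Lambda\le n$, so $\pvd\Lambda(S,S[k])=0$ for all $k>n$. By Lemma~\ref{lem:simple_modd}, every $M\in\dem{\Lambda}$ lies in $\langle\add S\rangle*\langle\add S\rangle[-1]*\cdots*\langle\add S\rangle[-d+1]$, and the same holds for $N$.
\begin{itemize}
\item Since $\pvd\Lambda(-,-)$ is cohomological in each variable, the vanishing of $\pvd\Lambda(X,Y)$ passes to finite direct sums, summands and extensions in $X$ and in $Y$.
\item So it suffices to check the generators: $\pvd\Lambda(S[-j],S[-j'][m])=\pvd\Lambda(S,S[m+j-j'])$ with $0\le j,j'\le d-1$.
\item Here $m+j-j'\ge m-(d-1)$, so this space vanishes as soon as $m>n+d-1$.
\end{itemize}
Hence $\gldim(\dem{\Lambda})\le\gldim\Lambda+d-1$. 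In particular, if $\gldim(\dem{\Lambda})=\infty$ then $\gldim\Lambda=\infty$.

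For the lower bound, let $n$ be finite with $\pvd\Lambda(S,S[n])\neq0$. Take $M=S$ and $N=S[-(d-1)]$; both lie in $\dem{\Lambda}$ because $S$ is a module. Then
$$\pvd\Lambda(M,N[n+d-1])=\pvd\Lambda(S,S[n])\neq0,$$
so $\BE^{n+d-1}(M,N)\neq0$ whenever $n+d-1\ge1$. Two consequences follow. If $\gldim\Lambda=\infty$, there are arbitrarily large such $n$, so $\gldim(\dem{\Lambda})=\infty$. If $\gldim\Lambda=n$ is finite, then $\gldim(\dem{\Lambda})\ge n+d-1$, and combined with the upper bound we get equality.

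The one step needing care is the degenerate case $n+d-1=0$, that is, $d=1$ and $\Lambda$ semisimple ($n=0$). There both sides are $0$ by the conventions. The only real bookkeeping is the extension-closure argument in the upper bound, which I expect to be routine via the long exact Hom sequences.
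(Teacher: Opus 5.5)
Your strategy is the paper's own: its proof is just ``Lemma~\ref{lem:simple_modd} plus the definition,'' and you supply the details. Proposition~\ref{prop:ext} turns $\gldim(\dem{\Lambda})$ into the vanishing of $\pvd\Lambda(M,N[m])$, and d\'evissage along the filtration reduces everything to $\pvd\Lambda(S,S[k])$. The upper bound is fine, but the witness in your lower bound is not.

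Throughout the paper the shift satisfies $H^i(X[1])=H^{i+1}(X)$. This is what gives $\pvd\Lambda(P,L[>0])=0$ for $P\in\MP_\Lambda$ and $L\in\dem{\Lambda}$ in the proof of Proposition~\ref{prop:ext}. Hence $S[-(d-1)]$ is concentrated in degree $d-1$. For $d\geq2$ it is not an object of $\dem{\Lambda}$, so your claim that ``both lie in $\dem{\Lambda}$'' is false. Consequently $\BE^{n+d-1}(S,S[-(d-1)])$ is not an extension group of $\dem{\Lambda}$ at all. The slip is inherited from the printed form of Lemma~\ref{lem:simple_modd}, whose shifts carry the wrong sign. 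Splitting off $H^0$ via the triangle $\tau^{\leq -1}X\to X\to H^0(X)\to$ and iterating gives $\dem{\Lambda}=\langle\add S\rangle[d-1]*\cdots*\langle\add S\rangle[1]*\langle\add S\rangle$.

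The repair is immediate: take $M=S[d-1]$ and $N=S$, which are concentrated in degrees $-d+1$ and $0$ respectively. Then $\BE^{n+d-1}(S[d-1],S)\cong\pvd\Lambda(S[d-1],S[n+d-1])\cong\pvd\Lambda(S,S[n])\neq0$. Your upper bound is unaffected. With the correct generators $S[j]$, $0\leq j\leq d-1$, one gets $\pvd\Lambda(S[j],S[j'][m])\cong\pvd\Lambda(S,S[m-j+j'])$, and $m-j+j'\geq m-(d-1)$ exactly as you wrote.

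A smaller point: your identification $\gldim\Lambda=\sup\{k\mid\pvd\Lambda(S,S[k])\neq0\}$ tacitly allows $n=0$ in Definition~\ref{dfn:global dimension}, which literally asks for $n$ positive. That reading is the one under which the proposition holds. Also, the degenerate case is not only $d=1$. For $\Lambda=k$ and $d=2$ one has $\gldim(\mathcal{H}^2_k)=1$, because $\BE^1(k[1],k)\cong\pvd k(k[1],k[1])\neq0$. So the formula needs $\gldim k=0$, while the literal definition would give $1$.
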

\begin{proof}
It immediately follows from Lemma~\ref{lem:simple_modd} and the definition of global dimension.
\end{proof}

\begin{definition}\label{dfn:pd}
Let $M\in\dem{\Lambda}$. We define the \emph{projective dimension} $\pd M$ of $M$ as the supremum of the set of integers $n$ such that $\BE^n(M,-)\neq 0$. Dually, define the \emph{injective dimension} $\id M$ of $M$ as the supremum of the set of integers $n$ such that $\BE^n(-,M)\neq 0$.
\end{definition}

\begin{remark}
If $\Lambda$ has global dimension $m$, then the projective and injective dimensions of any object in $\dem{\Lambda}$ are at most $m+d-1$ by Proposition~\ref{prop:global_dim}. Conversely, if the projective dimensions of any object in $\dem{\Lambda}$ are at most $n$, then $\gldim \Lambda \leq n-d+1$.
\end{remark}

\begin{proposition}[{\cite{tom2025,min2021}}]
Let $N\in\dem{\Lambda}$. Then the integer $\pd N$ coincides with the infimum of the set of integers $n$ such that $N\in \MK^{[-n,0]}(\MP_\Lambda)$.
\end{proposition}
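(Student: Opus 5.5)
The plan is to prove two inequalities: if $N\in\MK^{[-n,0]}(\MP_\Lambda)$ then $\pd N\leq n$, and if $\pd N\leq n$ then $N\in\MK^{[-n,0]}(\MP_\Lambda)$. Here $N\in\dem{\Lambda}$ is viewed inside $\pvd\Lambda$. By Proposition~\ref{prop:ext}, $\BE^m(N,L)\cong\pvd\Lambda(N,L[m])$ for $L\in\dem{\Lambda}$, so everything reduces to computing Hom-vanishing in $\pvd\Lambda$.

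For the first inequality, suppose $N\in\add\Lambda*\add\Lambda[1]*\cdots*\add\Lambda[n]$. It suffices to check $\pvd\Lambda(\Lambda[j],L[m])=0$ for $0\leq j\leq n<m$ and $L\in\dem{\Lambda}$. This vanishing holds because $\pvd\Lambda(\Lambda[j],L[m])\cong H^{m-j}(L)$ and $m-j>0$, exactly as in Lemma~\ref{lem:proj_pres}. The long exact sequences along the iterated extension then give $\BE^m(N,-)=0$ for all $m>n$, so $\pd N\leq n$.

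For the converse, suppose $\pd N\leq n$, so $\pvd\Lambda(N,L[m])=0$ for all $m>n$ and all $L\in\dem{\Lambda}$.
\begin{enumerate}
\item Since $\dem{\Lambda}$ has enough projectives, build syzygy conflations $\Omega^{i+1}N\to P_i\to\Omega^iN\dashrightarrow$ as in \eqref{eq:syzygy}.
\item Iterating octahedra gives, for each $m$, a triangle $X_m\to N\to\Omega^mN[m]\to X_m[1]$ in $\pvd\Lambda$ with $X_m\in\add\Lambda*\add\Lambda[1]*\cdots*\add\Lambda[m-1]$. This is the same splicing used in Proposition~\ref{prop:inductive_cocone}.
\item Take $m=n$; this requires $n\geq 1$, while the case $n=0$ is the standard fact that $\BE^1(N,-)=0$ forces $N$ projective. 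Then $\BE^{1}(\Omega^nN,-)\cong\BE^{n+1}(N,-)=0$ by dimension shifting \cite[Corollary~3.21]{gor2021}.
\item Hence $\Omega^nN$ is projective, and the conflation $\Omega^nN\to P_{n-1}\to\Omega^{n-1}N\dashrightarrow$ splices in to give $N\in\add\Lambda*\cdots*\add\Lambda[n]$.
\end{enumerate}

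Thus $N\in\MK^{[-n,0]}(\MP_\Lambda)$ precisely when $\pd N\leq n$, and taking infima gives the equality. When $\pd N=\infty$, no $n$ works, so both sides are $\infty$.

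The main obstacle is the bookkeeping in step 2. One must check that the $*$-products of shifted projectives really compose under the octahedral axiom in the right order. The identity $(\MC*\MC[1]*\cdots*\MC[m-1])*\MC[m]$ follows from associativity of $*$ in a triangulated category, so this is where the proof needs care. A secondary point is that the conflations in $\dem{\Lambda}$ are genuine triangles in $\pvd\Lambda$, which is item (2) of the extriangulated-structure proposition.
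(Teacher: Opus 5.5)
Your proof is correct. The paper does not prove this proposition itself; it only cites \cite{tom2025,min2021}, where projective dimension is developed for DG-modules over connective DG-algebras in greater generality. So your argument is a self-contained alternative rather than a reconstruction of a proof in the text.

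You stay entirely inside the extriangulated category $\dem{\Lambda}$:
\begin{itemize}
\item The bound $\pd N\leq n$ for $N\in\MK^{[-n,0]}(\MP_\Lambda)$ is the Hom-vanishing of Lemma~\ref{lem:proj_pres} combined with Proposition~\ref{prop:ext}.
\item The converse uses the dimension shift $\BE^{n+1}(N,-)\cong\BE(\Omega^nN,-)$ of \cite[Corollary~3.21]{gor2021} together with associativity of $*$. This is the same octahedral splicing behind Proposition~\ref{prop:inductive_cocone}. The difference is that here the last syzygy $\Omega^nN$ is projective, so $N$ itself, not merely a truncation, lies in $\MP_\Lambda*\cdots*\MP_\Lambda[n]$.
\end{itemize}

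What your version buys is that it characterizes exactly the invariant of Definition~\ref{dfn:pd}, defined through $\BE^n$ in $\dem{\Lambda}$, using only tools already in the paper. No translation from the DG-module notion of projective dimension in the cited works is needed. The citation, in turn, places the result in the general theory that also underlies Definition~\ref{dfn:global dimension}.

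A minor point: your step~4 is already contained in step~2 with $m=n$. The triangle $X_n\to N\to\Omega^nN[n]\to X_n[1]$ with $\Omega^nN\in\MP_\Lambda$ directly gives $N\in X_n*\MP_\Lambda[n]$.
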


\begin{proposition}\label{prop:inj_dim_d+1}
The following hold:
\begin{enumerate}
	\item Let $P^\bullet$ be an object in $\MK^{[-d,0]}(\MP_\Lambda)$. If $\tau^{>- d} P^\bullet\in\dem{\Lambda}$ has projective dimension $d+1$, then $(\tau^{\leq -d} P^\bullet)[-d]\in\dem{\Lambda}$ is projective.
	\item Let $I^\bullet$ be an object in $\MK^{[0,d]}(\MI_\Lambda)$. If $\tau^{\leq 0} I^\bullet\in\dem{\Lambda}$ has injective dimension $d+1$, then $(\tau^{> 0} I^\bullet)[d]\in\dem{\Lambda}$ is injective.
\end{enumerate}
\end{proposition}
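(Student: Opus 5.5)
The idea is to read off an extension of $N:=\tau^{>-d}P^\bullet$ by $K:=(\tau^{\leq -d}P^\bullet)[-d]$ from the truncation triangle. Then the vanishing of $\BE(K,-)$ follows from Lemma~\ref{lem:proj_pres} combined with the bound $\pd N\leq d+1$. I only treat (1); (2) is the formal dual, using Lemma~\ref{lem:proj_pres}(2) and $\BE^{d+2}(-,\tau^{\leq 0}I^\bullet)=0$.

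First, by the remark on objects of $\MK^{[-d,0]}(\MP_\Lambda)$, both $N$ and $K$ lie in $\dem{\Lambda}$. Rotating the truncation triangle $\tau^{\leq -d}P^\bullet\to P^\bullet\to\tau^{>-d}P^\bullet\to$ gives a triangle in $\pvd\Lambda$:
$$N[-1]\to K[d]\to P^\bullet\to N.$$
Fix an arbitrary $L\in\dem{\Lambda}$ and apply the cohomological functor $\pvd\Lambda(-,L[d+1])$. This yields an exact sequence
$$\pvd\Lambda(P^\bullet,L[d+1])\to\pvd\Lambda(K[d],L[d+1])\to\pvd\Lambda(N[-1],L[d+1]).$$

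The two outer terms vanish. The left term is zero by Lemma~\ref{lem:proj_pres}(1) with $i=d+1$. The right term equals $\pvd\Lambda(N,L[d+2])\cong\BE^{d+2}(N,L)$ by Proposition~\ref{prop:ext}. This is zero because $\pd N\leq d+1$; I read the hypothesis ``projective dimension $d+1$'' as giving this bound, and that is all that is needed. Hence the middle term vanishes:
$$\BE(K,L)\cong\pvd\Lambda(K,L[1])\cong\pvd\Lambda(K[d],L[d+1])=0$$
for every $L\in\dem{\Lambda}$.

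Finally, since $\dem{\Lambda}$ has enough projectives, choose a conflation $\Omega K\to P\to K\dashrightarrow$ with $P\in\MP_\Lambda$. Its class lies in $\BE(K,\Omega K)=0$, so the conflation splits. Thus $K$ is a direct summand of $P$, and $K\in\add\Lambda=\MP_\Lambda$ because $\dem{\Lambda}$ is Krull--Schmidt and $\add\Lambda$ is closed under summands.

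There is no real obstacle. The only points to check are the degree bookkeeping in the rotated triangle, which gives the shift $[d]$ on $K$, and that $K$ really lies in $\dem{\Lambda}$, which the remark on objects of $\MK^{[-d,0]}(\MP_\Lambda)$ provides.
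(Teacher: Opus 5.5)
Your proof is correct and matches the paper's argument: the paper applies $\pvd\Lambda(-,M[d+1])$ to the same truncation triangle, kills the outer terms using Lemma~\ref{lem:proj_pres} and $\BE^{d+2}(\tau^{>-d}P^\bullet,-)=0$, and concludes $\BE\bigl((\tau^{\leq -d}P^\bullet)[-d],-\bigr)=0$. You read the hypothesis as $\pd\leq d+1$, which is how the paper later applies it, and your explicit splitting argument fills in the step ``hence projective'' that the paper leaves implicit.
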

\begin{proof}
We only show the first assertion. Consider the triangle below in $\pvd\Lambda$:
\begin{equation}\label{eq:trunc}
\tau^{\leq -d} P^\bullet \to P^\bullet \to \tau^{> d} P^\bullet \dashrightarrow.
\end{equation}
By applying the functor $\pvd\Lambda(-,M[d+1])$ to triangle \eqref{eq:trunc} for any $M\in\dem{\Lambda}$, we have the following exact sequence in $\mod k$:
$$\pvd\Lambda(P^\bullet,M[d+1])\to \pvd\Lambda(\tau^{\leq -d}P^\bullet,M[d+1])\to \pvd\Lambda(\tau^{> d} P^\bullet,M[d+2]).$$
By Lemma~\ref{lem:proj_pres} and the assumption, the leftmost and rightmost terms are zero. Thus we have
$$\BE\bigl((\tau^{\leq -d}P^\bullet)[-d], M\bigr)\cong\pvd\Lambda\bigl((\tau^{\leq -d} P^\bullet)[-d],M[1]\bigr)=0$$
for any $M\in\dem{\Lambda}$. This implies that $(\tau^{\leq -d} P^\bullet)[-d]$ is projective.
\end{proof}

For the rest of this subsection, we explain the notion of dominant dimensions of $d$-truncated DG-algebras and Auslander $d$-truncated DG-algebras. We fix a $d$-truncated proper connective DG-algebra $\Gamma$.

\begin{definition}\label{dfn:dominant_dim}
Let $n$ be a positive integer. We say that $\Gamma$ has \emph{dominant dimension at least $n$} if there exists a sequence of conflations in $\dem{\Gamma}$:
$$\Gamma\to I_0\to M_1\dashrightarrow;$$
$$M_1\to I_1\to M_2\dashrightarrow;$$
$$\vdots$$
$$M_{n-1}\to I_{n-1}\to M_n\dashrightarrow,$$
where $I_i$ are projective-injective objects in $\dem{\Gamma}$ for any $0\leq i\leq n-1$. The \emph{dominant dimension} $\domdim \Gamma$ of $\Gamma$ is defined as the supremum of the set of integers $n$ such that $\Gamma$ has dominant dimension at least $n$. If there is no such integer, we set $\domdim \Gamma=\infty$.
\end{definition}

For convenience, we also introduce the notion of codominant dimension.

\begin{definition}\label{dfn:codom_dim}
Let $n$ be a positive integer. We say that $\Gamma$ has \emph{codominant dimension at least $n$} when there exists a sequence of conflations in $\dem{\Gamma}$:
$$M_{-1}\to P_{0}\to D_d\Gamma\dashrightarrow;$$
$$M_{-2}\to P_{-1}\to M_{-1}\dashrightarrow;$$
$$\vdots$$
$$M_{-n}\to P_{-n+1}\to M_{-n+1}\dashrightarrow,$$
where $P_i$ are projective-injective objects in $\dem{\Gamma}$ for any $0\leq i\leq n-1$. The \emph{codominant dimension} $\codomdim \Gamma$ of $\Gamma$ is defined as the supremum of the set of integers $n$ such that $\Gamma$ has codominant dimension at least $n$. If there is no such integer, we set $\codomdim \Gamma=\infty$.
\end{definition}

\begin{remark}
This definition agrees with the dominant dimension of extriangulated categories in \cite[Definition~3.5]{gor2023}; for $d=1$ it recovers the usual dominant dimension of finite-dimensional algebras introduced in \cite{MR104718,MR161888}.
\end{remark}

\begin{remark}\label{rmk:domdim}
The definition of dominant dimension depends on the fixed integer $d$. That is, for a proper $d$-truncated DG-algebra $\Gamma$, the dominant dimension $\domdim \Gamma$ may change if we regard $\Gamma$ as a proper $d'$-truncated DG-algebra for some $d'\geq d$. The global dimension of $\Gamma$ is, by definition, independent of $d$.
\end{remark}

\begin{lemma}\label{lem:domdim}
$\Gamma$ has dominant dimension at least $d+1$ if and only if there exists an object $I^\bullet\in \MK^{[0,d]}(\MI_\Gamma\cap\MP_\Gamma)$ such that $\tau^{\leq 0} I^\bullet\cong \Gamma$ in $\dem{\Gamma}$. Dually, $\Gamma$ has codominant dimension at least $d+1$ if and only if there exists an object $P^\bullet\in \MK^{[-d,0]}(\MI_\Gamma\cap\MP_\Gamma)$ such that $\tau^{> -d} P^\bullet\cong \Gamma$ in $\dem{\Gamma}$.
\end{lemma}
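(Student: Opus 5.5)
This is the dual of Proposition~\ref{prop:inductive_cocone}, specialised to $\MC=\MP_\Gamma\cap\MI_\Gamma$. I would prove that dual statement and then read off the lemma.

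Dominant dimension at least $d+1$ means a sequence of $d+1$ conflations $M_i\to I_i\to M_{i+1}\dashrightarrow$ for $0\le i\le d$, with $M_0=\Gamma$ and every $I_i$ projective-injective. This is exactly condition (1) of the dual of Proposition~\ref{prop:inductive_cocone}. That dual says: for $N\in\dem{\Gamma}$ and a full subcategory $\MC$, the existence of such conflations with middle terms in $\MC$ is equivalent to the existence of $I^\bullet\in\MC[-d]*\cdots*\MC$ with $\tau^{\le 0}I^\bullet\cong N$. With $\MC=\MP_\Gamma\cap\MI_\Gamma$ the category $\MC[-d]*\cdots*\MC$ is $\MK^{[0,d]}(\MI_\Gamma\cap\MP_\Gamma)$ by definition, so the first assertion follows.

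\textbf{Direction (conflations $\Rightarrow$ object).} In $\pvd\Gamma$ each conflation gives a triangle $M_i\to I_i\to M_{i+1}\to M_i[1]$. I would build the object inductively from the end. Set $X_d:=I_d$. Given $X_{i+1}\in\MC[-(d-i-1)]*\cdots*\MC$ with $\tau^{\le 0}X_{i+1}\cong M_{i+1}$, compose $M_{i+1}\to M_i[1]$ with the map to define $X_i$ as an octahedral gluing of $I_i$ and $X_{i+1}[-1]$. Then $X_i\in X_{i+1}[-1]*I_i$, so $X_i$ lies in $\MC[-(d-i)]*\cdots*\MC$.

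To check $\tau^{\le 0}X_i\cong M_i$, use the long exact cohomology sequence. By induction $X_{i+1}$ has cohomology $H^{\le 0}$ equal to that of $M_{i+1}$, with the remaining cohomology in positive degrees. Shifting by $[-1]$ moves everything up one degree. Comparing with $M_i\to I_i\to M_{i+1}$ gives $H^{\le 0}X_i\cong H^{\le 0}M_i$. The natural map $M_i\to X_i$ then induces $M_i\cong \tau^{\le 0}X_i$.

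\textbf{Direction (object $\Rightarrow$ conflations).} Write $I^\bullet\in\MC[-d]*\big(\MC[-d+1]*\cdots*\MC\big)$, splitting off the top layer $I_0:=$ the degree-$0$ part. This gives a triangle $I_0\to I^\bullet\to Y$, or equivalently $Y[-1]\to I_0\to I^\bullet$ after rotation, with $Y[-1]\in \MC[-d]*\cdots*\MC[-1]$. Truncating yields a conflation $\tau^{\le 0}I^\bullet\to I_0\to M_1\dashrightarrow$, where $M_1:=\tau^{\le 0}(Y[-1][1])$ and $Y\in \MK^{[0,d-1]}$. The key observation is that $H^{1}$ of the cone is controlled: the map $\tau^{\le 0}I^\bullet\to I_0$ is a $d$-monomorphism by Corollary~\ref{cor:defl_chara}, since $H^{-d+1}$ injectivity comes from the long exact sequence and the vanishing $H^{<-d+1}$ of objects in $\MK^{[0,d]}$. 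Iterating produces the $d+1$ conflations.

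\textbf{Main obstacle and dual statement.} The delicate point is the exact bookkeeping of cohomological degrees, especially verifying the conflation condition via Corollary~\ref{cor:defl_chara} in the presence of $\Hom$ from $\MC[-j]$ into $\dem{\Gamma}$. This is where the injectivity of $\MC$ is used: $\pvd\Gamma(M,I[j])=0$ for $j\ge 1$ when $I$ is injective. Alternatively, I would simply cite the proof of \cite[Lemma~2.7]{moc2025b}, as Proposition~\ref{prop:inductive_cocone} does.

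The codominant statement is Proposition~\ref{prop:inductive_cocone} itself, taking $N=D_d\Gamma$ and $\MC=\MP_\Gamma\cap\MI_\Gamma$. Note this yields $\tau^{>-d}P^\bullet\cong D_d\Gamma$, so the "$\Gamma$" in that statement should read $D_d\Gamma$. Alternatively, one can apply $D_d$ to reduce to the first assertion for $\Gamma\op$.
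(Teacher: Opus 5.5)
Your top-level reduction is the paper's: its whole proof is that the lemma ``follows directly from Proposition~\ref{prop:inductive_cocone}'' (and its dual), applied with $\MC=\MP_\Gamma\cap\MI_\Gamma$. You are also right that the second assertion must read $\tau^{>-d}P^\bullet\cong D_d\Gamma$, consistent with Definition~\ref{dfn:codom_dim} and condition (iii') of Definition~\ref{dfn:auslander}. Simply citing the proposition would have sufficed. However, the self-contained sketch you give of its dual has wrong bookkeeping in both directions, so as written it does not prove it. (Injectivity of $\MC$ is never needed; the statement holds for any full subcategory $\MC\subset\dem{\Gamma}$.)

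\textbf{From conflations to $I^\bullet$.} Your invariant $\tau^{\le 0}X_{i+1}\cong M_{i+1}$ fails at the start. Since $\tau^{\le 0}X_d=I_d$, the invariant at $i+1=d$ would require the inflation $M_d\to I_d$ to be an isomorphism, which happens only if $M_{d+1}=0$. The object $X_0$ you build is nevertheless the right one. The correct invariant is a triangle $M_i\to X_i\to M_{d+1}[i-d]\to M_i[1]$. For $i=d$ this is the last conflation, and the octahedral axiom applied to $I_i\to M_{i+1}\to X_{i+1}$ propagates it downward. A degree argument is available only at $i=0$: there $M_{d+1}[-d]$ is concentrated in degrees $[1,d]$, so the triangle is the truncation triangle of $X_0$. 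Equivalently, unroll $M_i\in M_{i+1}[-1]*I_i$ to get $M_0\in M_{d+1}[-d-1]*\bigl(I_d[-d]*\cdots*I_0\bigr)$.

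\textbf{From $I^\bullet$ to conflations.} The degree-$0$ layer is a quotient, not a subobject. The relevant triangle is $Z\to I^\bullet\to I_0\to Z[1]$ with $Z\in\MC[-d]*\cdots*\MC[-1]$. The composite $\tau^{\le 0}I^\bullet\to I_0$ is indeed a $d$-monomorphism, since $H^{-d+1}(Z)=0$. But its cone $M_1$ is not $\tau^{\le 0}(Z[1])$. The octahedral axiom gives a triangle $\tau^{>0}I^\bullet\to M_1\to Z[1]\to(\tau^{>0}I^\bullet)[1]$. Hence $H^0(M_1)\to H^0(Z[1])$ is injective with cokernel $H^1(I^\bullet)$, which need not vanish. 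For example, if $d=1$ and $I^\bullet$ is a complex $I_0\xrightarrow{u}I_1$ in degrees $0,1$, then $M_1=\mathrm{Im}\,u$, not $I_1$.

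So you cannot iterate by treating the tail as a copresentation of $M_1$. What does iterate is the following. Let $T_j\in\MC[-(d-j)]*\cdots*\MC$ be the tail after removing $I_0,\dots,I_{j-1}$. Then there is a map $M_j\to T_j$ whose cocone is $(\tau^{>0}I^\bullet)[j-1]$. This cocone is concentrated in degrees $\geq 2-j>-d+1$ whenever $j\le d$. That is exactly what makes $M_j\to T_j\to I_j$ a $d$-monomorphism, by Corollary~\ref{cor:defl_chara}. The octahedral axiom on this composite then passes the invariant from $j$ to $j+1$.
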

\begin{proof}
This follows directly from Proposition~\ref{prop:inductive_cocone}.
\end{proof}

\begin{definition}\label{dfn:auslander}
The $d$-truncated DG-algebra $\Gamma$ is called \emph{Auslander} if the following equivalent conditions are satisfied:
\begin{itemize}[labelwidth=1.5em, align=right]
\item[(i)] $\dem{\Gamma}$ is a $d$-Auslander extriangulated category.
\item[(ii)] $\gldim (\dem{\Gamma}) \leq d+1\leq \domdim (\dem{\Gamma}).$
\item[(ii')] $\gldim (\dem{\Gamma}) \leq d+1\leq \codomdim (\dem{\Gamma}).$ 
\item[(iii)] $\gldim \Gamma \leq 2$ and there exists an object $I^\bullet\in \MK^{[0,d]}(\MI_\Gamma\cap\MP_\Gamma)$ such that $\tau^{\leq 0} I^\bullet\cong \Gamma$ in $\dem{\Gamma}$.
\item[(iii')] $\gldim \Gamma \leq 2$ and there exists an object $P^\bullet\in \MK^{[-d,0]}(\MI_\Gamma\cap\MP_\Gamma)$ such that $\tau^{> -d} P^\bullet\cong D_d\Gamma$ in $\dem{\Gamma}$. 
\end{itemize}
\end{definition}

The equivalence between (i) and (ii) is precisely the definition of $d$-Auslander extriangulated categories. The equivalence of (ii) and (iii) follows from Proposition~\ref{prop:global_dim} and Lemma~\ref{lem:domdim}. The equivalence of (ii) and (ii') follows from the self-duality of $d$-Auslander extriangulated categories (see \cite[Remark~3.4]{che2023c}). 

\begin{remark} Note the following for Auslander $d$-truncated DG-algebras:
\begin{enumerate}
	\item If $d=1$, this definition coincides with the Auslander algebras. 
	\item The definition of Auslander $d$-truncated DG-algebras depends on the fixed integer $d$. Namely, for a proper $d$-truncated DG-algebra $\Gamma$, it may be an Auslander $d$-truncated DG-algebra but not an Auslander $d'$-truncated DG-algebra for some $d'\geq d$. See also Remark~\ref{rmk:domdim}.
\end{enumerate}
\end{remark}

\subsection{\texorpdfstring{Auslander correspondence for $d$-truncated DG-algebras}{Auslander d-truncated DG-algebras}}

To state the main result, we introduce finite representation type for $d$-truncated DG-algebras.

\begin{definition}
Let $\Lambda$ be a proper $d$-truncated DG-algebra. We say that $\Lambda$ is \emph{representation-finite} if there exists an object $M\in\dem{\Lambda}$ such that $\dem{\Lambda}=\add M$. Such an object $M$ is called an \emph{additive generator} of $\dem{\Lambda}$. If $M$ is basic in $\dem{\Lambda}$ (i.e., it has no repeated indecomposable summands), then we call it a \emph{basic additive generator}.
\end{definition}

\begin{remark}
Since $\dem{\Lambda}$ is Krull--Schmidt, if $\Lambda$ is representation-finite, then there exists a basic additive generator of $\dem{\Lambda}$ up to isomorphism.
\end{remark}

We also introduce Morita equivalence for $d$-truncated DG-algebras.

\begin{definition}\label{dfn:morita}
Let $\Lambda$ and $\Lambda'$ be $d$-truncated proper connective DG-algebras. 
We say that $\Lambda$ and $\Lambda'$ are \emph{Morita equivalent} if $\demdg{\Lambda}$ and $\demdg{\Lambda'}$ are quasi-equivalent as DG-categories (see Definition~\ref{dfn:can_enhance} for $\demdg{\Lambda}$ and $\demdg{\Lambda'}$).
\end{definition}

\begin{remark}\label{rmk:morita}
Let $\Lambda$ and $\Lambda'$ be $d$-truncated proper connective DG-algebras. It is straightforward to see that $\Lambda$ and $\Lambda'$ are Morita equivalent if and only if the DG-categories $\mathscr{P}_\Lambda$ and $\mathscr{P}_{\Lambda'}$ are quasi-equivalent, where $\mathscr{P}_\Lambda$ (resp. $\mathscr{P}_{\Lambda'}$) is the full DG-subcategory of $\demdg{\Lambda}$ (resp. $\demdg{\Lambda'}$) consisting of the projective objects in $\dem{\Lambda}$ (resp. $\dem{\Lambda'}$).
\end{remark}

\begin{remark} Let $\Lambda$ and $\Lambda'$ be $d$-truncated proper connective DG-algebras. If $\Lambda$ and $\Lambda'$ are Morita equivalent, then $\Lambda$ is of finite representation type (resp. Auslander) if and only if so is $\Lambda'$. 
\end{remark}

The next theorem is the main result of this paper.

\begin{theorem}\label{thm:main_first}
There exists a bijection between the following two classes:
\begin{itemize}
\item[(i)] The Morita equivalence classes of $d$-truncated DG-algebras $\Lambda$ of finite representation type.
\item[(ii)] The Morita equivalence classes of Auslander $d$-truncated DG-algebras $\Gamma$.
\end{itemize}
The correspondence from (i) to (ii) is given by the endomorphism $d$-truncated DG-algebra $\Gamma:=\End_\Lambda(M)$ of an additive generator $M$ of $\dem{\Lambda}$, and the correspondence from (ii) to (i) is given by the $d$-truncated DG-algebra $\Lambda:=\End_\Gamma(P)$ where $P$ is a basic additive generator of the projective-injective objects in $\dem{\Gamma}$.
\end{theorem}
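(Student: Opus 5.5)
The plan follows Auslander's classical proof, transported to $d$-extended module categories via the DG-enhancement $\demdg{\Lambda}$. The key tool is the Yoneda-type functor
$$F:=\hom{\Lambda}{M,-}\colon \dem{\Lambda}\to \dem{\Gamma},$$
where $\Gamma:=\End_\Lambda(M)$ and $M$ is an additive generator. At the DG-level, $F$ is the restricted Yoneda functor $\demdg{\Lambda}\to \tau^{\leq0}\mathscr{D}(\Gamma)$, $X\mapsto \tau^{\leq 0}\mathscr{D}(\Lambda)(M,X)$. Since $\dem{\Lambda}=\add M$, this functor is quasi-fully faithful. By Lemma on hom complexes it lands in $\dem{\Gamma}$, and its essential image is $\add\Gamma=\MP_\Gamma$. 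Hence $\demdg{\Lambda}$ is quasi-equivalent to $\mathscr{P}_\Gamma$, and $\Gamma$ is $d$-truncated, proper and connective.

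\textbf{Step 1: $\gldim\Gamma\leq 2$.} By Lemma~\ref{lem:simple_modd} and Definition~\ref{dfn:global dimension}, it suffices to resolve each simple $\top F(X)$, with $X$ indecomposable, by a $3$-term projective complex.

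If $X$ is non-projective, take the Auslander–Reiten conflation $L\to E\to X\dashrightarrow$ (Theorem~\ref{thm:existance_AR_seq}). Applying $F$ and using that $F(E)\to F(X)$ is right almost split, the sequence $F(L)\to F(E)\to F(X)$ realizes $\top F(X)$ as the totalization in $\MP_\Gamma*\MP_\Gamma[1]*\MP_\Gamma[2]$. This is checked on cohomology, using the long exact sequence of $\hom{\Lambda}{M,-}$ applied to the triangle $L\to E\to X\to L[1]$ and truncating at $\tau^{\le0}$.

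If $X$ is projective, use the sink morphism $\rad X\to X$ instead. This is a $1$-monomorphism (Remark~\ref{rmk:rad_1_mono}), so it gives a $2$-term resolution.

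Combining the two cases gives $\pvd\Gamma(\top\Gamma,\top\Gamma[>2])=0$.

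\textbf{Step 2: dominant dimension.} Write $M=\Lambda\oplus M'$ and embed $M$ into its injective copresentation $I^\bullet\in\MK^{[0,d]}(\MI_\Lambda)$ (Proposition~\ref{prop:proj_inj_present}), so that $\tau^{\leq0}I^\bullet\cong M$. Applying $F$ termwise, each $F(D_d\Lambda)$ is projective, being in $\add\Gamma$. It is also injective: by Lemma~\ref{lem_chara_proj_inj}(2) together with Lemma~\ref{lem:n-mono_epi_chara}, $\hom{\Gamma}{-,F(D_d\Lambda)}$ is computed as $\hom{\Lambda}{-\otimes M,D_d\Lambda}\cong D_d\hom{\Lambda}{M,-}$-type duality, which sends $d$-monomorphisms to $d$-epimorphisms. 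Thus $F(I^\bullet)\in\MK^{[0,d]}(\MP_\Gamma\cap\MI_\Gamma)$. One then checks, using the left exactness statement of Lemma~\ref{lem:n-mono_epi_chara}(3), that $\tau^{\le0}F(I^\bullet)\cong F(M)=\Gamma$. Lemma~\ref{lem:domdim} then gives condition (iii), so $\Gamma$ is Auslander.

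\textbf{Step 3: the converse.} Let $\Gamma$ be Auslander, let $P$ be a basic additive generator of $\MP_\Gamma\cap\MI_\Gamma$, and set $\Lambda:=\End_\Gamma(P)$. Consider $G:=\hom{\Gamma}{P,-}\colon\dem{\Gamma}\to\dem{\Lambda}$. The domdim $\ge d+1$ condition, via the copresentation of Lemma~\ref{lem:domdim}, shows that $G$ is fully faithful on $\MP_\Gamma$. This is the double-centralizer argument: morphisms from $X\in\MP_\Gamma$ into $\MK^{[0,d]}$-copresentations by $\add P$ are detected by $P$.

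Next, $\gldim\le d+1$ combined with Proposition~\ref{prop:inj_dim_d+1} shows that $G|_{\MP_\Gamma}$ is essentially surjective onto $\dem{\Lambda}$. Given $N\in\dem{\Lambda}$, take its projective presentation in $\MK^{[-d,0]}(\add\Lambda)$, lift it through $G$ to a complex over $\add P$, and truncate. Proposition~\ref{prop:inj_dim_d+1} forces the resulting $\tau^{\le -d}$ part to be projective, so the truncation lies in $\MP_\Gamma$ and maps to $N$.

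This yields $\mathscr{P}_\Gamma\simeq\demdg{\Lambda}$, so $\Lambda$ is representation-finite, and $\Gamma\cong\End_\Lambda(G\Gamma)$ up to Morita equivalence by Remark~\ref{rmk:morita}. The two constructions are therefore mutually inverse on Morita classes: Step 1's quasi-equivalence $\demdg{\Lambda}\simeq\mathscr{P}_\Gamma$ identifies $\add P$ with $F(\MI_\Lambda)$, whose endomorphism algebra is $\Lambda$ by the injective analogue of Lemma~\ref{lem:homology_functor}.

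\textbf{Main obstacle.} I expect the hardest part to be the essential surjectivity in Step 3 and the matching cohomology computations in Steps 1–2. These require careful truncation bookkeeping, showing that the $\tau^{\le0}$ of the hom complexes behaves exactly as the exact sequences do in the $d=1$ case. For these I would rely on Lemma~\ref{lem:n-mono_epi_chara} and Corollary~\ref{cor:defl_chara} degree by degree.
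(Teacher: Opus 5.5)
Your Steps 1 and 2 follow the paper's forward direction: simples $\top F(X)$ are resolved via Auslander--Reiten conflations and $\rad X\to X$, and $\rhom{\Lambda}{M,-}$ is applied to an injective copresentation of $M$. In Step 2, state the actual reason $F(D_d\Lambda)$ is injective: $\Lambda\in\add M$ makes $M$ projective as a left $\Gamma$-module, so $\hom{\Lambda}{M,D_d\Lambda}\cong D_dM$.

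The gap is in Step 3. $G=\hom{\Gamma}{P,-}$ does \emph{not} restrict to an equivalence $\MP_\Gamma\to\dem{\Lambda}$. Take $P=F(D_d\Lambda)$ and use $F\colon\dem{\Lambda}\simeq\MP_\Gamma$; then $G\circ F\cong\hom{\Lambda}{D_d\Lambda,-}$, which is an inverse-Nakayama-type functor, not the identity.

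A counterexample already exists for $d=1$ and $\Lambda=kA_2$. Let $S$ be the simple projective non-injective module. Then $F(S)\neq 0$ is projective, but $G(F(S))=\mathrm{Hom}_\Lambda(D\Lambda,S)=0$. Moreover $G(\Gamma)\cong\Lambda_\Lambda$. So $G|_{\MP_\Gamma}$ is neither faithful nor essentially surjective, and $\End_\Lambda(G\Gamma)\not\cong\Gamma$.

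Both of your sub-arguments break at identifiable points.
\begin{enumerate}
\item Full faithfulness: copresentations by $\add P$, which is what dominant dimension provides, make the \emph{contravariant} functor $\hom{\Gamma}{-,P}$ fully faithful, since $P$ is injective. The covariant $G$ is fully faithful on objects \emph{presented} by $\add P$, that is, on $\MC_P$ of Lemma~\ref{lem:submorita_modd}.
\item Essential surjectivity: Proposition~\ref{prop:inj_dim_d+1}(1) makes $Y:=(\tau^{\leq -d}Q^\bullet)[-d]$ projective. However, the object with $G(X)\cong N$ is $X:=\tau^{>-d}Q^\bullet$, not $Y$, and $X$ is not projective in general.
\end{enumerate}

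The repair goes as follows. $Q^\bullet[-d]\in\MK^{[0,d]}(\add P)$ is an injective copresentation of $Y$, and $Y$ has injective dimension at most $d+1$. So Proposition~\ref{prop:inj_dim_d+1}(2) shows that $X$ is \emph{injective}. Conversely, condition (iii') of Definition~\ref{dfn:auslander} gives $D_d\Gamma\in\MC_P$. Hence $\MC_P=\MI_\Gamma$, and $G\colon\MI_\Gamma\to\dem{\Lambda}$ is an equivalence. Its additive generator is $G(D_d\Gamma)$, with $\End_\Lambda(G(D_d\Gamma))\cong\End_\Gamma(D_d\Gamma)\cong\Gamma$. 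The derived Nakayama functor, shifted by $[d-1]$, identifies the DG-subcategory of injectives with $\mathscr{P}_\Gamma$, which gives the Morita statement.

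The paper's Proposition~\ref{prop:submorita_aus} and Corollary~\ref{cor:2to1} are also worded with $\MP_\Gamma$ and $\hom{\Gamma}{P,\Gamma}$. But the proof given there is exactly this double application of Proposition~\ref{prop:inj_dim_d+1}, and it yields injectivity of the objects of $\MC_P$, not projectivity.
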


\subsubsection{The correspondence from (i) to (ii)}

Fix a proper $d$-truncated DG-algebra $\Lambda$ of finite representation type and an additive generator $M$ of $\dem{\Lambda}$. Set $\Gamma:=\End_\Lambda(M)$ for the endomorphism $d$-truncated DG-algebra of $M$ (see Definition~\ref{dfn:end_alg}). The aim here is to show that $\Gamma$ is Auslander.

\begin{remark}
We may view $M$ as a DG-bimodule over $\Gamma$ and $\Lambda$. This yields the functor ${}_\Lambda(M,-)\colon \dem{\Lambda}\to \dem{\Gamma}$, which we denote by $F_M$.
\end{remark}

\begin{proposition}
Let $\Lambda$ be a proper $d$-truncated DG-algebra of finite representation type and $M$ be an additive generator of $\dem{\Lambda}$. Then the endomorphism $d$-truncated DG-algebra $\Gamma:=\End_\Lambda(M)$ is Auslander. 
\end{proposition}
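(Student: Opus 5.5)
We will verify condition (iii) of Definition~\ref{dfn:auslander} for $\Gamma=\End_\Lambda(M)$: $\gldim\Gamma\le 2$, and $\Gamma\cong\tau^{\le 0}I^\bullet$ for some $I^\bullet\in\MK^{[0,d]}(\MI_\Gamma\cap\MP_\Gamma)$.

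\textbf{Reduction to Yoneda-type properties of $F_M$.} The main tool is $F_M={}_\Lambda(M,-)\colon\dem{\Lambda}\to\dem{\Gamma}$. Since $\dem{\Lambda}=\add M$ and $F_M(M)=\Gamma$, the functor $F_M$ restricts to an equivalence $\dem{\Lambda}=\add M\simeq\add\Gamma=\MP_\Gamma$. This is the DG-Yoneda lemma applied to the truncated hom complexes: the derived functor $\rhom{\Lambda}{M,-}$ on $\per$-type objects, followed by $\tau^{\le 0}$, gives $\hom{\Gamma}{F_ML,F_MN}\cong\hom{\Lambda}{L,N}$. We will use three consequences.

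\begin{enumerate}
\item $F_M$ sends conflations of $\dem{\Lambda}$ to triangles in $\pvd\Gamma$. More precisely, a conflation $L\to N\to N'\dashrightarrow$ yields a triangle $\rhom{\Lambda}{M,L}\to\rhom{\Lambda}{M,N}\to\rhom{\Lambda}{M,N'}\to$, and we truncate it.
\item By Lemma~\ref{lem_chara_proj_inj}, since $M$ is projective-generating in the relevant sense, $F_M$ sends $d$-epimorphisms onto $d$-epimorphisms exactly when the source is projective.
\item $F_M(D_d\Lambda)$ is projective-injective in $\dem{\Gamma}$. Projectivity is automatic because $F_M(D_d\Lambda)\in\add\Gamma$. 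For injectivity, compute ${}_\Gamma(-,F_M D_d\Lambda)$ on $\add\Gamma$ via Serre-type duality: ${}_\Lambda(X,D_d\Lambda)\cong D_d H^{\ast}$-dual of ${}_\Lambda(\Lambda,X)$. This identifies $F_M(D_d\Lambda)\cong D_d\bigl({}_\Lambda(M,\Lambda)\bigr)$ up to the bimodule structure, i.e.\ $F_M(D_d\Lambda)$ is the $D_d$-dual of a projective $\Gamma^{\rm op}$-module, hence lies in $\MI_\Gamma$.
\end{enumerate}

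\textbf{Dominant dimension.} Take an injective copresentation $I^\bullet\in\MK^{[0,d]}(\MI_\Lambda)$ of $M$ with $\tau^{\le 0}I^\bullet\cong M$ (Proposition~\ref{prop:proj_inj_present}). Apply $\rhom{\Lambda}{M,-}$ and then observe the following. Each $\MI_\Lambda[-j]$ is sent to $F_M(\MI_\Lambda)[-j]$ up to truncation, because $\rhom{\Lambda}{M,D_d\Lambda}$ is concentrated in degrees $[-d+1,0]$. So $\rhom{\Lambda}{M,I^\bullet}\in\MK^{[0,d]}(\MP_\Gamma\cap\MI_\Gamma)$. Moreover, $\tau^{\le 0}\rhom{\Lambda}{M,I^\bullet}\cong\tau^{\le0}\rhom{\Lambda}{M,\tau^{\le 0}I^\bullet}=\Gamma$, since $M$ is connective and $\rhom{}$ is left $t$-exact in the appropriate sense. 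This gives the second part of (iii) via Lemma~\ref{lem:domdim}.

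\textbf{Global dimension.} By Proposition~\ref{prop:global_dim} it suffices to show $\pd X\le d+1$ for all $X\in\dem{\Gamma}$, and we will argue via $\MK^{[-d-1,0]}(\MP_\Gamma)$.

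Given $X$, take a projective presentation $P^\bullet\in\MK^{[-d,0]}(\MP_\Gamma)$. By the equivalence $\add M\simeq\MP_\Gamma$, lift $P^\bullet$ to a complex $M^\bullet\in\add M*\add M[1]*\cdots*\add M[d]$ in $\pvd\Lambda$, realized as an iterated cone of maps between objects of $\dem{\Lambda}$. The key point is that every $\Lambda$-object has a right $\add M$-approximation that is a $d$-epimorphism, namely the identity on a summand. The cocone of the relevant morphism in $\dem{\Lambda}$ therefore lies in $\dem{\Lambda}$ after one more step, i.e.\ as a "kernel", which is again in $\add M$. Applying $F_M$ yields a projective resolution of $X$ of length $d+1$: the $(d+2)$-nd term vanishes because kernels in $\dem{\Lambda}$ of $d$-epimorphisms, as in the conflation $\Ker f\to L\to N$, stay in $\dem{\Lambda}=\add M$, so the syzygy $\Omega^{d+1}X$ is $F_M$ of an object of $\dem{\Lambda}$ and hence projective. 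Equivalently, in the spirit of Proposition~\ref{prop:inj_dim_d+1}, we show $\BE^{d+2}(X,-)=0$ by identifying $\Omega^{d+1}X$ with $F_M(\Ker)$.

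\textbf{Main obstacle.} The hardest step is this last identification: showing that $F_M$ sends the conflation $\Ker f\to L\to N$ to a conflation whose cone recovers the syzygy, i.e.\ that $\tau^{\le0}$ of the triangle image matches the truncation $(\tau^{\le -d}P^\bullet)[-d]$. I would first handle it by explicit cohomology comparison using Lemma~\ref{lem:n-mono_epi_chara}(3), which says $F_M$ preserves $n$-monomorphisms.
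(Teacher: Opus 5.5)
Your dominant-dimension half is essentially the paper's argument, and it is fine. One slip there: the correct identification is $F_M(D_d\Lambda)\cong D_d\bigl({}_\Lambda(\Lambda,M)\bigr)=D_dM$, with $M$ regarded as a left $\Gamma$-module. This module is projective in $\dem{\Gamma\op}$ because $\Lambda$ is a summand of $M$. Your $D_d\bigl({}_\Lambda(M,\Lambda)\bigr)$ is a left $\Gamma$-module and does not typecheck.

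The genuine gap is in the global-dimension half. The step you defer as the ``main obstacle'' is the whole content of that half: that $\Omega^{d+1}X$, i.e.\ $(\tau^{\le -d}P^\bullet)[-d]$, is $F_M$ of an object of $\dem{\Lambda}$. You name the right target but give no argument for it, and the route you indicate cannot reach it.
\begin{itemize}
\item $F_M$ does not send conflations of $\dem{\Lambda}$ to conflations, and truncating a triangle does not give a triangle. By Lemma~\ref{lem_chara_proj_inj}, $F_M$ preserves $d$-epimorphisms only if $M$ is projective. So the fact that kernels of $d$-epimorphisms stay in $\dem{\Lambda}=\add M$ says nothing about syzygies in $\dem{\Gamma}$.
\item The $H^0$-level equivalence $\add M\simeq\MP_\Gamma$ does not let you lift $P^\bullet$ and read it back. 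Even with the honest lift $M^\bullet:=P^\bullet\lox_\Gamma M$, the complex $\rhom{\Lambda}{M,M^\bullet}$ is not $P^\bullet$. The reason is that $\rend{\Lambda}{M}$ carries the cohomology $\pvd\Lambda(M,M[>0])$, which $\Gamma=\tau^{\le0}\rend{\Lambda}{M}$ discards.
\item Lemma~\ref{lem:n-mono_epi_chara}(3) addresses neither point.
\end{itemize}

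What is missing is the correct analogue of left exactness of $\mathrm{Hom}_\Lambda(M,-)$. For $f\colon M_1\to M_0$ in $\dem{\Lambda}$, comparing long exact sequences in degrees $\le 0$ gives $\tau^{\le0}\Cocone(F_Mf)\cong F_M\bigl(\tau^{\le0}\Cocone f\bigr)$, and $\tau^{\le0}\Cocone f\in\dem{\Lambda}$.

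The paper uses this as follows. Via Proposition~\ref{prop:global_dim} it reduces to $\gldim\Gamma\le 2$, i.e.\ to the simple objects $S_N=\top F_M(N)$. It resolves them with the sink map $g\colon N'\to N$ from the Auslander--Reiten conflation $L\to N'\to N$ (or with $\rad P\to P$ when $N=P$ is projective). The lift $g'_*\colon F_M(N')\to\rad F_M(N)$ is $H^0$-surjective, so its cocone has no $H^1$. That cocone agrees with $\Cocone(F_Mg)$ in degrees $\le 0$, hence $\Cocone(g'_*)\cong F_M(L)$ (Propositions~\ref{prop:proj_resol_of_simple} and~\ref{prop:simple_pd}). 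The same computation works for any $X\in\mod H^0\Gamma$ and any presentation map $f$, giving second syzygy $F_M(\tau^{\le0}\Cocone f)$, with no Auslander--Reiten theory needed.

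If you want to keep your route through arbitrary $X\in\dem{\Gamma}$, you need two facts:
\begin{itemize}
\item The unit $P^\bullet\to\rhom{\Lambda}{M,P^\bullet\lox_\Gamma M}$ is an isomorphism on $H^{\le -d}$. This holds because $\Gamma\to\rend{\Lambda}{M}$ is an isomorphism on $H^{\le0}$ and $P^\bullet$ is built from the $\Gamma[j]$, $0\le j\le d$.
\item $H^{\le -d}\rhom{\Lambda}{M,\tau^{>-d}M^\bullet}=0$.
\end{itemize}
Together these give $(\tau^{\le-d}P^\bullet)[-d]\cong F_M\bigl((\tau^{\le-d}M^\bullet)[-d]\bigr)$. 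Finally, $(\tau^{\le-d}M^\bullet)[-d]\in\dem{\Lambda}=\add M$ because $M^\bullet$ has cohomology in $[-2d+1,0]$. This is precisely the step you left open.
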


\begin{proposition}\label{prop:fully_faithful}
The functor $F_M:={}_\Lambda(M,-)\colon \dem{\Lambda}\to \dem{\Gamma}$ is fully faithful. Moreover, its essential image corresponds to the full subcategory of projective objects $\MP_\Gamma\subset\dem{\Gamma}$.
\end{proposition}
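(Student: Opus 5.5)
The plan is to realise $F_M$ as the restriction of a derived functor and reduce everything to the basic case $L=M$. Work with the DG-bimodule ${}_\Gamma M_\Lambda$. Since $M\in\dem{\Lambda}$ and $\Gamma=\tauleq{0}\rend{\Lambda}{M}$, we may replace $M$ by a $\Gamma$-$\Lambda$-bimodule model and consider the functor
$$\rhom{\Lambda}{M,-}\colon \der{\Lambda}\to\der{\Gamma}$$
(restricting scalars along $\Gamma\to\rend{\Lambda}{M}$). Then $F_M$ is its composite with $\tauleq{0}$.

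\textbf{Step 1: the image lies in $\dem{\Gamma}$.} For $L\in\dem{\Lambda}$, the lemma preceding the definition of hom complexes gives $H^i\rhom{\Lambda}{M,L}=0$ for $i\leq -d$. Properness of $\Lambda$ and Krull--Schmidt hom-finiteness of $\pvd\Lambda$ give finite total cohomology. Hence $F_M(L)=\tauleq{0}\rhom{\Lambda}{M,L}$ lies in $\pvd\Gamma$, with cohomology in $[-d+1,0]$, so $F_M(L)\in\dem{\Gamma}$.

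\textbf{Step 2: the essential image is $\MP_\Gamma$.} By construction $F_M(M)=\Gamma$. Since $F_M$ is additive and every $L$ lies in $\add M$, we get $F_M(L)\in\add\Gamma=\MP_\Gamma$. Conversely, every object of $\MP_\Gamma=\add\Gamma$ is a summand of some $\Gamma^n=F_M(M^n)$. So once full faithfulness is known, idempotents split: the idempotent on $M^n$ corresponding to the summand splits in the Krull--Schmidt category $\dem{\Lambda}$, and its image is a preimage. Thus the essential image is exactly $\MP_\Gamma$.

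\textbf{Step 3: full faithfulness.} By additivity and passage to summands, it suffices to treat $L=N=M$, i.e.\ to show that $\dem{\Lambda}(M,M)\to\dem{\Gamma}(\Gamma,\Gamma)$ is bijective. Lemma~\ref{lem:homology_functor} applied to $\Gamma$ identifies the target with $H^0(\Gamma)$. By Remark~\ref{rmk:0-th_hom}, $H^0(\Gamma)=H^0\rend{\Lambda}{M}=\dem{\Lambda}(M,M)$.

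The main obstacle is checking that the map induced by $F_M$ agrees with this identification, i.e.\ that $H^0(F_M)$ on endomorphisms of $M$ is the identity of $H^0\End_\Lambda(M)$. The approach is to note that $\rhom{\Lambda}{M,-}$ sends an endomorphism $f$ to left multiplication by $f$ on $\rend{\Lambda}{M}$. After $\tauleq{0}$, this is left multiplication by the class of $f$ in $H^0\Gamma$, which under $\der{\Gamma}(\Gamma,\Gamma)\cong H^0\Gamma$ is $f$ itself. For general $L,N\in\add M$ one then uses naturality with respect to split inclusions and projections $L\to M^n\to L$ to transport bijectivity. This requires care that truncation $\tauleq{0}$ is functorial on $\der{\Gamma}$, which it is via the $t$-structure of Remark~\ref{rmk:t-str}.
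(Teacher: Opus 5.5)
Your proposal is correct and follows the same route as the paper. The paper's proof is just the observation that $F_M(M)=\Gamma$ and that $F_M$ induces $\dem{\Lambda}(M,M)\cong\dem{\Gamma}(\Gamma,\Gamma)=H^0(\Gamma)$, with the fact that $M$ is an additive generator doing the rest; you additionally spell out that $F_M$ lands in $\dem{\Gamma}$, the reduction from $\add M$ to $M$, the idempotent-splitting argument giving essential image exactly $\add\Gamma=\MP_\Gamma$, and that the induced map on endomorphisms is the canonical identification.
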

\begin{proof}
This follows because $F_M$ sends $M$ to $\Gamma$ and induces the isomorphism
$$\dem{\Lambda}(M,M)\xrightarrow{\cong} \dem{\Gamma}\bigl(F_M(M),F_M(M)\bigr)=\dem{\Gamma}(\Gamma,\Gamma)=H^0(\Gamma)$$
because $M$ is an additive generator of $\dem{\Lambda}$ by assumption.
\end{proof}

\begin{lemma}\label{lem:non_retraction_chara}
Let $g\colon N'\to N$ be a morphism in $\dem{\Lambda}$ where $N$ is indecomposable. Then the following are equivalent:
\begin{enumerate}
\item The morphism $g\colon N'\to N$ is not a retraction.
\item The induced morphism 
$$g\circ - \colon \dem{\Lambda}(M,N') \to \dem{\Lambda}(M,N)$$
factors through the inclusion $\iota\colon \rad \bigl(\dem{\Lambda}(M,N)\bigr) \to \dem{\Lambda}(M,N)$ in $\mod H^0\Gamma$.
\item The induced morphism
$$g_* \colon {}_\Lambda(M,N') \to {}_\Lambda(M,N)$$
factors through the $1$-monomorphism $\iota\colon \rad {}_\Lambda(M,N)\to {}_\Lambda(M,N)$ in $\dem{\Gamma}$.
\end{enumerate}
\end{lemma}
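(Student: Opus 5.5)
The plan is to prove (1)$\Leftrightarrow$(2) by Yoneda-type arguments in $\mod H^0\Gamma$, and then to lift to the hom complex to get (2)$\Leftrightarrow$(3).

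\textbf{Setup.} Write $\Gamma^0:=H^0\Gamma=\dem{\Lambda}(M,M)$. By Remark~\ref{rmk:0-th_hom}, $H^0(\hom{\Lambda}{M,N})=\dem{\Lambda}(M,N)$. Proposition~\ref{prop:fully_faithful} gives $\dem{\Lambda}(M,N)\cong \dem{\Gamma}(\Gamma,F_MN)$, and since $\mod H^0\Gamma$ is the heart, this is the projective $\Gamma^0$-module $H^0(F_MN)$ attached to $N$.

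\textbf{(1)$\Leftrightarrow$(2).} Since $N$ is indecomposable, $\End_\Lambda(N)$ is local, so $H^0(F_MN)$ is an indecomposable projective $\Gamma^0$-module. Its radical is the unique maximal submodule, namely the image of all non-isomorphisms. Concretely, $\rad\dem{\Lambda}(M,N)$ consists of the morphisms $M\to N$ that are not split epimorphisms onto $N$ after restriction to any summand, i.e.\ the maps in the Jacobson radical of the category $\dem{\Lambda}$. Suppose $g$ is not a retraction. Then for every $h\colon M\to N'$, the composite $gh$ is not a retraction: if some summand of $M$ mapped isomorphically, $g$ would be a retraction. Hence the image of $g\circ-$ lies in the radical. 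Conversely, suppose $g$ is a retraction with section $s$. Choose a split epimorphism $p\colon M\to N$, which exists because $M$ is an additive generator. Then $p=g\circ(s\circ p)$ lies in the image of $g\circ-$ but not in the radical. I would phrase this via the top: $\top H^0(F_MN)$ is the simple module at $N$, and the image of $g\circ-$ is the whole module precisely when it surjects onto the top. Projectivity of $H^0(F_MN)$ and the fact that $M$ generates then yield a lift, which is a section.

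\textbf{(2)$\Leftrightarrow$(3).} This step needs more care. By Definition~\ref{dfn:top_radical}, $\rad\hom{\Lambda}{M,N}$ is the cocone of $\hom{\Lambda}{M,N}\to H^0\to\top$. There is a triangle
$$\rad\hom{\Lambda}{M,N}\xrightarrow{\iota}\hom{\Lambda}{M,N}\to\top\hom{\Lambda}{M,N}\to.$$
Applying $\dem{\Gamma}(F_MN',-)$ to this triangle, the composite $g_*$ factors through $\iota$ if and only if the composite $F_MN'\to\top F_MN$ vanishes. Indeed, the obstruction lies in $\pvd\Gamma(F_MN',\top[-1])$, which is zero because $\top$ is concentrated in degree $0$ and $F_MN'$ has no positive cohomology. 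By Lemma~\ref{lem:homology_functor}, since $F_MN'$ is projective, that composite vanishes if and only if $H^0(g_*)$ followed by the map to the top vanishes, which is exactly condition (2).

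\textbf{Main obstacle.} The hardest point is justifying the vanishing of the obstruction term and the identification in (1)$\Leftrightarrow$(2). For the latter I would fall back on $H^0(F_M)$ being fully faithful on $\add M$ and on the standard argument for indecomposable projectives over the basic algebra $\Gamma^0$.
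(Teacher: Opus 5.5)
Your proposal is correct and follows essentially the paper's route. For (1)$\Leftrightarrow$(2) you use that $\dem{\Lambda}(M,N)$ is an indecomposable projective $H^0\Gamma$-module whose radical is its unique maximal submodule, so lying in it is the same as $g$ not being a retraction. For (2)$\Leftrightarrow$(3) you apply Lemma~\ref{lem:homology_functor} to the projective object $F_M(N')$, testing against the triangle $\rad F_M(N)\to F_M(N)\to\top F_M(N)\dashrightarrow$ instead of the paper's commutative square of Hom-spaces, which amounts to the same thing. One small misattribution: existence of the factorization in (3) follows from exactness of the long exact Hom sequence alone. The vanishing of $\pvd\Gamma\bigl(F_M(N'),(\top F_M(N))[-1]\bigr)$ is not needed for it; it only gives uniqueness of the lift, which the paper's subsequent remark uses.
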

\begin{proof}
First, we prove the equivalence of (1) and (2). The objects $\dem{\Lambda}(M,N')$ and $\dem{\Lambda}(M,N)$ in $\mod H^0\Gamma$ are projective because $M$ is an additive generator of $\dem{\Lambda}$. Moreover, $\dem{\Lambda}(M,N)$ is indecomposable since $N$ is. Hence the morphism $g\circ-\colon F_M(N')\to F_M(N)$ factors through the radical of $F_M(N)$ if and only if $g\circ-$ is not surjective, equivalently, $g$ is not a retraction.

Next, we show the equivalence of (2) and (3). Since $\hom{\Lambda}{M,N'}$ is a projective object in $\dem{\Gamma}$ by Proposition~\ref{prop:fully_faithful}, we have the commutative diagram in $\mod k$:
$$
\begin{tikzcd}
\dem{\Gamma}(\hom{\Lambda}{M,N'}, \rad \hom{\Lambda}{M,N}) 
& \mod H^0\Gamma\bigl(H^0\bigl(\hom{\Lambda}{M,N'}\bigr), H^0\bigl(\rad \hom{\Lambda}{M,N}\bigr)\bigr)\\
\dem{\Gamma}(\hom{\Lambda}{M,N'}, \hom{\Lambda}{M,N})
& \mod H^0\Gamma\bigl(H^0\bigl(\hom{\Lambda}{M,N'}\bigr), H^0\bigl(\hom{\Lambda}{M,N}\bigr)\bigr)
\Ar{1-1}{1-2}{"\cong"}
\Ar{2-1}{2-2}{"\cong"}
\Ar{1-1}{2-1}{}
\Ar{1-2}{2-2}{}
\end{tikzcd}
$$
by Lemma~\ref{lem:homology_functor}. The right vertical map is identified with the following map
$$\mod H^0\Gamma \bigl(\dem{\Lambda}(M,N'), \dem{\Lambda}(M,N)\bigr)\to \mod H^0\Gamma \bigl(\dem{\Lambda}(M,N'), \rad \dem{\Lambda}(M,N)\bigr)$$ 
Hence the morphism $g\circ -$ factors through the radical of $\dem{\Lambda}(M,N)$ in $\mod H^0\Gamma$ if and only if $g_*\colon {}_\Lambda(M,N')\to {}_\Lambda(M,N)$ factors through the radical of ${}_\Lambda(M,N)$ in $\dem{\Lambda}$. This proves the equivalence of (2) and (3).
\end{proof}

\begin{remark}
Since $\iota\colon \rad \hom{\Lambda}{M,N'}\to \hom{\Lambda}{M,N'}$ is a $1$-monomorphism in $\dem{\Gamma}$, it is also a monomorphism in $\dem{\Lambda}$ by Corollary~\ref{cor:defl_chara}. Hence lifts of the morphism $g_*$ in Lemma~\ref{lem:non_retraction_chara} are unique in $\dem{\Gamma}$. We denote such a lift by the same symbol $g_*'$. Consequently, $\iota \circ g'_* = g_*$.
\end{remark}

\begin{lemma}\label{lem:sink_chara}
Let $g\colon N'\to N$ be not a retraction in $\dem{\Lambda}$ and $N$ is indecomposable. Then $g$ is a sink morphism if and only if $g_*'\colon {}_\Lambda(M,N')\to \rad{}_\Lambda(M,N)$ is an $n$-epimorphism in $\dem{\Gamma}$.
\end{lemma}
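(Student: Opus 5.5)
The plan is to transport everything to $\dem{\Gamma}$ through $F_M$, using Lemma~\ref{lem:n-mono_epi_chara} to turn the $n$-epimorphism condition on $g'_*$ into a statement about the cone of $g'_*$. First I make the cohomology of the objects explicit. By Remark~\ref{rmk:0-th_hom} and the $t$-structure,
$$H^{-j}\bigl(\hom{\Lambda}{M,X}\bigr)\cong \pvd\Lambda(M,X[-j])\quad (0\le j\le d-1).$$
By Definition~\ref{dfn:top_radical}, $\rad\hom{\Lambda}{M,N}\to\hom{\Lambda}{M,N}$ is the cocone of the map to $\top H^0$. Hence it is an isomorphism on $H^{-j}$ for $j\geq 1$ and the inclusion $\rad\dem{\Lambda}(M,N)\subset\dem{\Lambda}(M,N)$ on $H^0$. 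Thus $H^{-j}(g'_*)$ is $g\circ-\colon \pvd\Lambda(M,N'[-j])\to\pvd\Lambda(M,N[-j])$ for $j\ge1$, and $g\circ-\colon \dem{\Lambda}(M,N')\to\rad\dem{\Lambda}(M,N)$ for $j=0$. The proof then consists of matching the surjectivity/bijectivity requirements of an $n$-epimorphism, degree by degree, with right almost splitness and right minimality of $g$.

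\emph{Right almost split part (degree $0$).} Since $M$ is an additive generator and $N$ is indecomposable, Lemma~\ref{lem:non_retraction_chara} gives the following. The elements of $\rad\dem{\Lambda}(M,N)$ are exactly the morphisms $M\to N$ which are non-retractions on every indecomposable summand. So $H^0(g'_*)$ is surjective iff every non-retraction $M'\to N$ with $M'\in\add M=\dem{\Lambda}$ factors through $g$, i.e.\ iff $g$ is right almost split.

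\emph{Higher degrees.} For $j\ge1$, a morphism $u\colon M[j]\to N$ in $\pvd\Lambda$ factors uniquely through $L:=\tau^{>-d}(M[j])\in\dem{\Lambda}$, because $N\in(\pvd\Lambda)^{\ge -d+1}$. The resulting morphism $L\to N$ vanishes on $H^0$ whenever $j\ge1$ (as $H^0L=H^j M=0$ for $j\geq 1$). It is therefore never a retraction onto the indecomposable $N$, except in the degenerate case $N$ a summand of $L$ with $u$ split, which is excluded by comparing $H^0$. Hence right almost splitness forces $H^{-j}(g'_*)$ to be surjective for all $j\ge1$. Conversely these surjectivities are implied by the degree-$0$ statement applied to $L$. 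Right almost splitness is thus equivalent to $\Cone(g'_*)$ having no cohomology in degrees $\geq -d+1$ coming from cokernels. What remains for the $n$-epimorphism condition is the injectivity of $H^{-j}(g'_*)$ in the range $j\le d-n-1$, i.e.\ the part of $\Cone(g'_*)$ coming from kernels.

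\emph{Minimality.} I will show that right minimality of $g$ is exactly the vanishing of these kernels in the range prescribed by $n$. Here the tool is Proposition~\ref{prop:fully_faithful}: $F_M$ identifies $\dem{\Lambda}$ with $\MP_\Gamma$, so $g'_*$ is a map from a projective, and endomorphisms $h$ of $N'$ with $gh=g$ correspond to endomorphisms of $\hom{\Lambda}{M,N'}$ over $\rad\hom{\Lambda}{M,N}$. If $g$ is not right minimal, split off a summand $N''$ of $N'$ with $g|_{N''}=0$. Then $\hom{\Lambda}{M,N''}$ is a nonzero projective summand mapping to zero, and it contributes a nonzero kernel in $H^0$, contradicting the $n$-epimorphism condition. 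Conversely, given right minimality, I would use the Auslander--Reiten conflation $L_0\to N'\xrightarrow{g} N$ (Theorem~\ref{thm:existance_AR_seq}; if $N$ is projective use $\rad N\to N$, which is the sink morphism by the cited Proposition~3.6 of \cite{moc2025b}). Applying $\hom{\Lambda}{M,-}$ to this conflation gives a triangle in $\pvd\Gamma$. It identifies $\Cocone(g'_*)$ with $\hom{\Lambda}{M,L_0}$ corrected by $\top$, and hence shows that the cone has cohomology only in the degrees allowed by Lemma~\ref{lem:n-mono_epi_chara}(2). The main obstacle I expect is precisely this last step: pinning down the degree range of $\Cone(g'_*)$ from the AR conflation. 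The difficulty is that $\hom{\Lambda}{M,-}=\tau^{\le0}\rhom{\Lambda}{M,-}$ is only left exact, so the truncation error at $H^{-d+1}$ must be controlled. I would first try to handle this by the long exact sequence in $H^{*}$, using that $\pvd\Lambda(M,L_0[1])\to\pvd\Lambda(M,N'[1])$ is injective on the relevant part because the extension class of the AR conflation is almost vanishing.
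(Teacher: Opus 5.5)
You should read the statement with $n=d$. That is how it is used afterwards: in Proposition~\ref{prop:proj_resol_of_simple} it gives $H^1(\Cocone(g_*'))=0$, and in the next proposition it is combined with Corollary~\ref{cor:balanced}, which needs a $d$-epimorphism. Objects of $\dem{\Gamma}$ have no positive cohomology, so a $d$-epimorphism is simply a morphism that is surjective on $H^0$. The entire content of the lemma is therefore your degree-$0$ paragraph, which is correct and is exactly the paper's argument: $H^0(g_*')=g\circ-$, and its surjectivity onto $\rad\dem{\Lambda}(M,N)$ is equivalent to $g$ being right almost split.

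Right minimality is not detected at all. If $g$ is a sink morphism and $X\neq 0$, then $(g,0)\colon N'\oplus X\to N$ is right almost split and not right minimal, yet it is still surjective on $H^0$. What actually holds is ``$g$ right almost split $\Leftrightarrow$ $g_*'$ is a $d$-epimorphism''. In particular sink $\Rightarrow$ $d$-epimorphism, and that is all that is used later. The paper's ``one checks'' passes over this point too.

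Everything beyond degree $0$ in your plan is wrong, in three places.

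\emph{Higher degrees.} Right almost splitness does not force $H^{-j}(g_*')$ to be surjective for $j\ge 1$. Take $d=2$, $\Lambda=k$, $M=k\oplus k[1]$ and $N=k[1]$. Every non-retraction into $k[1]$ is zero, so $g\colon 0\to k[1]$ is a sink morphism. But $H^{-1}(g_*')\colon 0\to\pvd\Lambda(M,k)\cong k$ is not onto. Your exclusion of the case where $\bar u\colon L\to N$ is a retraction ``by comparing $H^0$'' fails exactly when $H^0N=0$. Here $L=\tau^{>-2}(M[1])\cong k[1]$ and $\bar u=\mathrm{id}$.

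\emph{Minimality.} Matching right minimality with injectivity of $H^{-j}(g_*')$ for $j\le d-n-1$ cannot work for any $n$. If $n<d$, an $n$-epimorphism must be bijective on $H^0$. Take an AR conflation $L_0\xrightarrow{f}N'\xrightarrow{g}N$ with $N'\neq 0$, so $f\neq 0$, and a split epimorphism $p\colon M\to L_0$. Then the nonzero element $f\circ p$ lies in $\Ker H^0(g_*')$, so the `only if' direction already fails. If $n=d$, your range is empty, and the argument would prove that every right almost split non-retraction is right minimal, which $(g,0)$ refutes.

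\emph{The AR-conflation step.} The step you flag as the main obstacle is identifying $\Cocone(g_*')$ with $\hom{\Lambda}{M,L_0}$. In the paper this is deduced from this lemma, in Proposition~\ref{prop:proj_resol_of_simple}, not the other way round. For $n=d$ it is not needed, since $H^0$-surjectivity is already the full condition.
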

\begin{proof}
One checks that $g$ is a sink morphism if and only if the induced morphism 
$$g\circ -\colon \dem{\Lambda}(M,N')\to \rad\bigl(\dem{\Lambda}(M,N)\bigr)$$ 
is an epimorphism in $\mod H^0\Gamma$. 
This is equivalent to $g_*\colon {}_\Lambda(M,N')\to \rad{}_\Lambda(M,N)$ being an $n$-epimorphism in $\dem{\Gamma}$ since $H^0(g_*)=g\circ -$ and by Corollary~\ref{cor:defl_chara}.
\end{proof}

\begin{proposition}\label{prop:proj_resol_of_simple}
Let $N\in \dem{\Lambda}$ be an indecomposable non-projective object. Then there exists a conflation in $\dem{\Gamma}$ as follows:
$${}_{\Lambda}(M,L)\xrightarrow{f_*} {}_{\Lambda}(M,N')\xrightarrow{g_*'} \rad {}_{\Lambda}(M,N)\dashrightarrow,$$
where $L\xrightarrow{f} N'\xrightarrow{g} N\dashrightarrow$ is an Auslander-Reiten conflation in $\dem{\Lambda}$.
\end{proposition}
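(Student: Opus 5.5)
We plan to start from the Auslander--Reiten conflation $L\xrightarrow{f} N'\xrightarrow{g} N\xrightarrow{h} L[1]$ in $\dem{\Lambda}$, which exists by Theorem~\ref{thm:existance_AR_seq} because $N$ is indecomposable and non-projective. We regard it as a triangle in $\pvd\Lambda$ and apply the triangle functor $\rhom{\Lambda}{M,-}$, which takes values in $\der{\Gamma'}$ with $\Gamma'=\rend{\Lambda}{M}$. Restricting along $\Gamma=\tauleq{0}\Gamma'\to\Gamma'$ gives a triangle in $\der{\Gamma}$:
$$\rhom{\Lambda}{M,L}\to\rhom{\Lambda}{M,N'}\to\rhom{\Lambda}{M,N}\to\rhom{\Lambda}{M,L}[1].$$
We then study its long exact cohomology sequence. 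For $i<0$ we have $H^i\rhom{\Lambda}{M,X}=\pvd\Lambda(M,X[i])$, and all three terms have cohomology in degrees $\geq -d+1$ by the lemma preceding the definition of the hom complex.

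The key step is to check that $H^0(g_*)=g\circ-\colon \dem{\Lambda}(M,N')\to\dem{\Lambda}(M,N)$ has image exactly $\rad\dem{\Lambda}(M,N)$. This holds because $g$ is right almost split and not a retraction, using Lemma~\ref{lem:non_retraction_chara} and Lemma~\ref{lem:sink_chara}. Consequently $g'_*\colon \hom{\Lambda}{M,N'}\to\rad\hom{\Lambda}{M,N}$ is surjective on $H^0$. Next we form the cocone $C$ of $g'_*$ in $\pvd\Gamma$; note that all the objects involved lie in $\dem{\Gamma}$. By the octahedral axiom applied to $\iota\circ g'_*=g_*$, the object $C$ sits in a triangle
$$C\to\Cocone(g_*)\to\Cocone(\iota)\to C[1].$$
Since $\iota$ is a $1$-monomorphism (Remark~\ref{rmk:rad_1_mono}), $\Cocone(\iota)\cong\top\hom{\Lambda}{M,N}[-1]$ is concentrated in degree $1$.

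We then compare $\Cocone(g_*)$ with $\hom{\Lambda}{M,L}$. The truncated cone of $\rhom{\Lambda}{M,-}$ identifies $\tauleq{0}\Cocone(g_*)$ with $\hom{\Lambda}{M,L}$ once one checks the degree-$1$ behaviour. Concretely, we have
$$H^1\Cocone(g_*)=\Cok\bigl(\dem{\Lambda}(M,N')\to\dem{\Lambda}(M,N)\bigr)=\top\dem{\Lambda}(M,N)=H^1\Cocone(\iota).$$
The higher cohomology is killed after truncation. From the triangle above it follows that $H^{\geq 1}C=0$ and $C\cong\hom{\Lambda}{M,L}$. Moreover, the composite $\hom{\Lambda}{M,L}\to\hom{\Lambda}{M,N'}$ is $f_*$, because $C\to\Cocone(g'_*)$ factors the map induced by $f$. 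This gives the desired triangle in $\pvd\Gamma$ with all terms in $\dem{\Gamma}$, and hence a conflation.

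The main obstacle is the degree bookkeeping in this comparison. We must verify that $\Cocone(g_*)\to\Cocone(\iota)$ is an isomorphism on $H^1$, and that the cocone computed in $\der{\Gamma}$ agrees with the truncated object. Here the truncation $\tauleq{0}$ does not commute with cocones. We would handle this by working with the triangle $\tauleq{0}\rhom{\Lambda}{M,N}\to\rhom{\Lambda}{M,N}\to\tau^{>0}\rhom{\Lambda}{M,N}$ and the vanishing of $\pvd\Lambda(M,L[1])\to\pvd\Lambda(M,N'[1])$ on the relevant part. That vanishing follows from the AR property: every map $M\to L[1]$ arising from $h$ is controlled by $\dem{\Lambda}(M,N)$, the connecting morphism of $H^0$ having image the top. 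Finally, the third term being in $\dem{\Gamma}$ is automatic from Corollary~\ref{cor:defl_chara}(2), since $g'_*$ is a $d$-epimorphism and hence $f_*$ is an inflation.
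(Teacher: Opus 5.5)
Your argument is correct and is essentially the paper's proof. You use the same octahedron for $g_*=\iota\circ g'_*$ (with cocones, where the paper uses cones), the same identification $\tauleq{0}\Cocone(g_*)\cong\hom{\Lambda}{M,L}$ in degrees $\leq 0$, and the same sink property of $g$ to get $H^1(\Cocone g'_*)=0$. The extra ``vanishing'' you invoke in your last paragraph is unnecessary: it is just exactness of the long exact sequence, and the identification follows directly from a morphism of triangles $\Cocone(g_*)\to\rhom{\Lambda}{M,L}$ together with the five lemma in degrees $\leq 0$.
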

\begin{proof}
Consider the following diagram in $\pvd\Lambda$:
$$
\begin{tikzcd}
\hom{\Lambda}{M,N'} & \rad\bigl(\hom{\Lambda}{M,N}\bigr) &  \Cone(g_*') & {}\\
\hom{\Lambda}{M,N'} & \hom{\Lambda}{M,N} & \Cone(g_*) & {}\\
{} & \top\bigl(\hom{\Lambda}{M,N}\bigr) & \top\bigl(\hom{\Lambda}{M,N}\bigr) & {} \\
{} & {} & {} & {}
\Ar{1-1}{1-2}{"g_*'"}
\Ar{1-2}{1-3}{}
\Ar{1-3}{1-4}{dashrightarrow}
\Ar{2-1}{2-2}{"g_*"}
\Ar{2-2}{2-3}{}
\Ar{2-3}{2-4}{dashrightarrow}
\Ar{3-2}{3-3}{equal}
\Ar{1-1}{2-1}{equal}
\Ar{1-2}{2-2}{}
\Ar{1-3}{2-3}{"\iota"}
\Ar{2-2}{3-2}{}
\Ar{2-3}{3-3}{}
\Ar{3-2}{4-2}{dashrightarrow}
\Ar{3-3}{4-3}{dashrightarrow}
\end{tikzcd}
$$
Since the cohomology of $\top\bigl(\hom{\Lambda}{M,N}\bigr)$ is concentrated in degree zero, we have the following isomorphisms in $\mod k$:
$$H^i\bigl( \Cone (g_*') \bigr) \to H^i\bigl( \Cone (g_*) \bigr)$$
for any $i\leq -1$. This means the following morphism is an isomorphism:
$$\tau^{\leq 0}\bigl(\Cocone (g_*')\bigr) \to \tau^{\leq 0}\bigl(\Cocone (g_*)\bigr)$$
in $\pvd\Lambda$. On the other hand, the object $\hom{\Lambda}{M,L}:=\tau^{0}\bigl(\rhom{\Lambda}{M,L}\bigr)$ is isomorphic to $\tau^{\leq 0}\bigl(\Cocone (g_*)\bigr)$. Moreover, by Lemma~\ref{lem:sink_chara}, we get $H^1\bigl(\Cocone(g_*')\bigr)=0$. Consequently, we obtain 
\begin{equation*}
\Cocone(g_*')\cong\tau^{\leq 0}\bigl(\Cocone(g_*')\bigr) \cong \tau^{\leq 0}\bigl(\Cocone(g_*')\bigr)\cong \hom{\Lambda}{M,L}.\qedhere
\end{equation*}
\end{proof}

\begin{proposition}
Let $P\in \dem{\Lambda}$ be an indecomposable projective object. Then there exists the following isomorphism:
$$\rad\bigl(\hom{\Lambda}{M,P}\bigr)\cong \hom{\Lambda}{M,\rad P}.$$
Moreover, we have the following conflation in $\dem{\Gamma}$:
$$\hom{\Lambda}{M,\rad P}\to {}_\Lambda(M,P)\to \top {}_\Lambda(M,P)\dashrightarrow.$$
\end{proposition}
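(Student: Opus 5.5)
The approach mirrors Proposition~\ref{prop:proj_resol_of_simple}, with the sink morphism $\rad P\to P$ playing the role of the Auslander--Reiten conflation. Let $\iota_P\colon \rad P\to P$ be the canonical morphism. By the proposition recalled from \cite[Proposition~3.6]{moc2025b}, $\iota_P$ is a sink morphism, and it is not a retraction since $P$ is indecomposable. Lemma~\ref{lem:non_retraction_chara} therefore factors $(\iota_P)_*\colon \hom{\Lambda}{M,\rad P}\to \hom{\Lambda}{M,P}$ uniquely as $\iota\circ(\iota_P)_*'$, with
\[(\iota_P)_*'\colon \hom{\Lambda}{M,\rad P}\to \rad \hom{\Lambda}{M,P}.\]
By Lemma~\ref{lem:sink_chara}, $(\iota_P)'_*$ is a $d$-epimorphism; on $H^0$ it is the surjection $\dem{\Lambda}(M,\rad P)\to \rad \dem{\Lambda}(M,P)$.

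The main step is to show that $(\iota_P)'_*$ is also a $1$-monomorphism; Corollary~\ref{cor:balanced} then makes it an isomorphism. I would argue through cones and cohomology.
\begin{enumerate}
\item By Remark~\ref{rmk:rad_1_mono}, $\iota_P$ is a $1$-monomorphism. Lemma~\ref{lem:n-mono_epi_chara}(3) then shows $(\iota_P)_*$ is a $1$-monomorphism in $\dem{\Gamma}$, so $H^{\le 0}\bigl(\Cocone (\iota_P)_*\bigr)=0$.
\item Compare $\Cone\bigl((\iota_P)'_*\bigr)$ with $\Cone\bigl((\iota_P)_*\bigr)$ using the octahedral diagram of Proposition~\ref{prop:proj_resol_of_simple}. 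The third terms differ only by $\top \hom{\Lambda}{M,P}$, which is concentrated in degree $0$.
\item Hence $H^i$ of the two cones agree for $i\le -1$. The cone of $(\iota_P)_*$ has vanishing cohomology in degrees $\le -1$, so the same holds for the cone of $(\iota_P)'_*$.
\item In degree $0$, the cohomology of $\Cone\bigl((\iota_P)'_*\bigr)$ is the cokernel of the surjection above, so it vanishes. Thus $\Cone\bigl((\iota_P)'_*\bigr)=0$ in the relevant range, and $(\iota_P)'_*$ is an isomorphism.
\end{enumerate}
More directly, one can note that $\Cocone(\iota_P)\simeq \tau^{\le -d}$ of something vanishes in $\dem{\Lambda}$, so $\rhom{\Lambda}{M,-}$ and $\tau^{\le0}$ give $\Cocone((\iota_P)_*)$ with $H^{\le 0}=0$. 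Combined with the long exact sequence for the composite $\iota\circ(\iota_P)'_*$ and the fact that $\iota$ is an isomorphism in degrees $\le -1$, this yields the same vanishing.

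The second claim follows immediately. Definition~\ref{dfn:top_radical} gives a conflation in $\dem{\Gamma}$
\[\rad \hom{\Lambda}{M,P}\to \hom{\Lambda}{M,P}\to \top\hom{\Lambda}{M,P}\dashrightarrow.\]
Substituting the isomorphism $\rad \hom{\Lambda}{M,P}\cong\hom{\Lambda}{M,\rad P}$ gives the stated conflation, and its first map is identified with $(\iota_P)_*$.

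The expected obstacle is the degree bookkeeping in the middle step, specifically checking that $\Cocone((\iota_P)'_*)$ has no cohomology in degrees $\le -d$. This should be automatic, since both source and target of $(\iota_P)'_*$ lie in $\dem{\Gamma}$ and the cone computation only involves degrees $-d+1,\dots,0$.
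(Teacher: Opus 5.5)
Your proposal is correct and is essentially the paper's argument: $(\iota_P)'_*$ is a $d$-epimorphism by Lemma~\ref{lem:sink_chara} and a $1$-monomorphism, so Corollary~\ref{cor:balanced} makes it an isomorphism, and the conflation then follows from the radical conflation. The only difference is how you get the $1$-monomorphism property. The paper obtains it in one line: $(\iota_P)_*=\iota\circ(\iota_P)'_*$ is a $1$-monomorphism, i.e.\ a monomorphism in $\dem{\Gamma}$ by Corollary~\ref{cor:defl_chara}, hence so is $(\iota_P)'_*$. Your cone comparison in steps (2)--(3) reproves this by hand.

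One small error: in the ``more directly'' aside, $\Cocone(\iota_P)$ is $(\top P)[-1]$, concentrated in degree $1$, not a $\tau^{\le -d}$-truncation. Nothing in your argument depends on that remark.
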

\begin{proof}
Since $\rad P\to P$ is a $1$-monomorphism, the morphism ${}_\Lambda(M,\rad P)\to {}_\Lambda(M,P)$ is also a $1$-monomorphism by Lemma~\ref{lem:n-mono_epi_chara}. Thus, ${}_\Lambda(M,\rad P)\to \rad {}_\Lambda(M,P)$ is an isomorphism by Corollary~\ref{cor:balanced} and Lemma~\ref{lem:sink_chara}. 
\end{proof}

\begin{notation}
For an indecomposable object $N\in\dem{\Lambda}$, we denote by $S_N$ the $\top\bigl(\hom{\Lambda}{M,N}\bigr)$. Note that $S_N$ is a simple object in $\mod H^0(\Gamma)$.
\end{notation}

This leads to the following proposition. 

\begin{proposition}\label{prop:simple_pd}
We have the following:
\begin{enumerate}
\item The simple objects $S_N\in\dem{\Lambda}$ associated to indecomposable non-projective objects $N\in\dem{\Gamma}$ have projective dimension $2$ (see Definition~\ref{dfn:pd}).
\item The simple objects $S_P\in\dem{\Lambda}$ associated to indecomposable projective objects $P\in\dem{\Lambda}$ have projective dimension $1$.
\end{enumerate} 
In particular, we have $\gldim (\dem{\Lambda}) \leq d+1$.
\end{proposition}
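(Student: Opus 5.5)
The simple objects $S_N\in\mod H^0\Gamma\subset\dem{\Gamma}$ are indexed by the indecomposable summands $N$ of $M$. The statement has typos: it should read $S_N\in\dem{\Gamma}$ and $\gldim(\dem{\Gamma})\le d+1$. The plan is to compute $\pd S_N$ by splicing the conflations established just above into a length-two projective resolution. Then $\gldim\Gamma\le 2$ follows, and Proposition~\ref{prop:global_dim} converts this into $\gldim(\dem{\Gamma})\le 2+d-1=d+1$.

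\emph{Case (2): $N=P$ indecomposable projective.} The preceding proposition gives a conflation
$$\hom{\Lambda}{M,\rad P}\to \hom{\Lambda}{M,P}\to S_P\dashrightarrow.$$
Both left terms lie in $\MP_\Gamma$ by Proposition~\ref{prop:fully_faithful}. Applying $\pvd\Gamma(-,X[n])$ for $X\in\dem{\Gamma}$ and $n\ge 2$ gives $\BE^n(S_P,X)=0$, using Proposition~\ref{prop:ext} and $\pvd\Gamma(Q,X[>0])=0$ for $Q\in\MP_\Gamma$. Hence $\pd S_P\le 1$. Equality holds because the conflation is non-split: $S_P$ is not projective, since a nonzero simple $H^0\Gamma$-module which is a projective cover of itself would force $\rad\hom{\Lambda}{M,P}=0$. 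That cannot happen when $\rad P\neq 0$. If $\rad P=0$, then $S_P\cong\hom{\Lambda}{M,P}$ is projective, and "dimension $1$" should be read as "at most $1$", which is all we need.

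\emph{Case (1): $N$ indecomposable non-projective.} Take the Auslander--Reiten conflation $L\to N'\to N\dashrightarrow$ from Theorem~\ref{thm:existance_AR_seq}. Proposition~\ref{prop:proj_resol_of_simple} then gives
$$\hom{\Lambda}{M,L}\to\hom{\Lambda}{M,N'}\to\rad\hom{\Lambda}{M,N}\dashrightarrow.$$
The radical/top conflation gives
$$\rad\hom{\Lambda}{M,N}\to\hom{\Lambda}{M,N}\to S_N\dashrightarrow.$$
The same dimension shifting as before (via the long exact sequences of Proposition~\ref{prop:ext}) yields $\BE^n(S_N,X)\cong\BE^{n-1}(\rad\hom{\Lambda}{M,N},X)\cong\BE^{n-2}(\hom{\Lambda}{M,L},X)=0$ for $n\ge3$. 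So $\pd S_N\le2$; equivalently, Proposition~\ref{prop:inductive_cocone} places $S_N$ in $\MP_\Gamma*\MP_\Gamma[1]*\MP_\Gamma[2]$.

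For equality I would show $\BE^2(S_N,\hom{\Lambda}{M,L})\neq0$. That group is the cokernel of
$$\dem{\Gamma}(\hom{\Lambda}{M,N'},\hom{\Lambda}{M,L})\to\dem{\Gamma}(\hom{\Lambda}{M,L},\hom{\Lambda}{M,L}),$$
which by full faithfulness (Proposition~\ref{prop:fully_faithful}) is the cokernel of $f^*\colon\dem{\Lambda}(N',L)\to\dem{\Lambda}(L,L)$. This is nonzero since $f$ is not a section.

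Finally, every simple $H^0\Gamma$-module is some $S_N$, so $\top\Gamma\in\add\bigoplus_N S_N$. Then $\pvd\Gamma(\top\Gamma,\top\Gamma[>2])=0$, i.e.\ $\gldim\Gamma\le2$ in the sense of Definition~\ref{dfn:global dimension}, and Proposition~\ref{prop:global_dim} gives $\gldim(\dem{\Gamma})\le d+1$.

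The main obstacle is the dimension-shifting step: the conflations live in $\dem{\Gamma}$, but vanishing is needed for all $\BE^n$ into arbitrary objects. This rests on the identification of $\BE^n$ with shifted Homs in Proposition~\ref{prop:ext} and on projectives being $\pvd\Gamma(-,X[>0])$-acyclic, which I would verify carefully. The equality claims (as opposed to the upper bounds) are secondary for the final inequality.
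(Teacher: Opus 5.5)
Your proposal is correct and matches the paper's (implicit) argument: the paper states this proposition right after Proposition~\ref{prop:proj_resol_of_simple} and the isomorphism $\rad\hom{\Lambda}{M,P}\cong\hom{\Lambda}{M,\rad P}$. Splicing those conflations with $\rad X\to X\to\top X\dashrightarrow$ gives projective resolutions of length $2$ and $1$, and then Lemma~\ref{lem:simple_top} and Proposition~\ref{prop:global_dim} give $\gldim(\dem{\Gamma})\le d+1$ exactly as in Corollary~\ref{cor:gldim2}. Your typo corrections ($N\in\dem{\Lambda}$, $S_N\in\dem{\Gamma}$, and $\dem{\Gamma}$ in the final inequality) are right, and so are your extra checks, which the paper leaves unstated: the nonvanishing of $\BE^2(S_N,\hom{\Lambda}{M,L})$ because $f$ is not a section, and the observation that $S_P$ is projective when $\rad P=0$, so that (2) should read $\pd S_P\le 1$.
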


\begin{lemma}\label{lem:simple_top}
Consider a simple object $S\in\mod H^0\Gamma$. Then there exists an indecomposable object $N\in\dem{\Lambda}$ such that $S\cong S_N$. 
\end{lemma}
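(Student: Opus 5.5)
The plan is to reduce the lemma to the classification of simple modules over $H^0\Gamma$ via its indecomposable projectives. First I identify $H^0\Gamma$. By Remark~\ref{rmk:0-th_hom} and Definition~\ref{dfn:end_alg},
$$H^0(\Gamma)=H^0\bigl(\tau^{\leq 0}\rend{\Lambda}{M}\bigr)\cong \dem{\Lambda}(M,M),$$
which is a finite-dimensional algebra because $\dem{\Lambda}$ is hom-finite. Every simple $H^0\Gamma$-module $S$ is then the top of some indecomposable projective $H^0\Gamma$-module $Q$; concretely, $Q=e\,H^0\Gamma$ for a primitive idempotent $e$ with $S\cong \top Q$.

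Next I realise $Q$ as $\dem{\Lambda}(M,N)$ for an indecomposable $N$. Write $M=\bigoplus_j N_j$ with the $N_j$ indecomposable, which is possible since $\dem{\Lambda}$ is Krull--Schmidt. Then
$$H^0\Gamma\cong\bigoplus_j \dem{\Lambda}(M,N_j)$$
as right $H^0\Gamma$-modules. Each summand $\dem{\Lambda}(M,N_j)$ is indecomposable projective: its endomorphism ring is $\dem{\Lambda}(N_j,N_j)$ (Yoneda, since $N_j\in\add M$), and this ring is local. By Krull--Schmidt in $\mod H^0\Gamma$, $Q\cong \dem{\Lambda}(M,N)$ for some $N=N_j$.

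Finally, Lemma~\ref{lem:homology_functor} and Proposition~\ref{prop:fully_faithful} give
$$H^0\bigl(\hom{\Lambda}{M,N}\bigr)\cong \dem{\Lambda}(M,N)\cong Q.$$
By Definition~\ref{dfn:top_radical}, $\top\bigl(\hom{\Lambda}{M,N}\bigr)$ is the top of its $H^0$, so $S_N=\top\bigl(\hom{\Lambda}{M,N}\bigr)\cong\top Q\cong S$.

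The only step needing care is the identification of $H^0\Gamma$ with $\dem{\Lambda}(M,M)$ as algebras, including the compatibility of multiplication with composition and of the right module structures, together with checking that the top defined in $\dem{\Gamma}$ agrees with the top in $\mod H^0\Gamma$. Both are immediate from the definitions, since the truncation $\tau^{\leq 0}$ does not change $H^0$.
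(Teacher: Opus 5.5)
Your proof is correct and follows essentially the same route as the paper: take the projective cover of $S$, which is an indecomposable projective $H^0\Gamma$-module, realize it as (the $H^0$ of) ${}_\Lambda(M,N)$ for an indecomposable $N$, and read off $S\cong S_N$. The only difference is that the paper gets this realization from the essential-image statement of Proposition~\ref{prop:fully_faithful}, whereas you obtain it directly by decomposing $M$ into indecomposable summands and applying Krull--Schmidt in $\mod H^0\Gamma$; also, the identification $H^0\bigl({}_\Lambda(M,N)\bigr)\cong\dem{\Lambda}(M,N)$ is really Remark~\ref{rmk:0-th_hom}, not Lemma~\ref{lem:homology_functor}.
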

\begin{proof}
Although well known, we include a proof for completeness. Take a projective cover $P\to S$ in $\mod H^0\Gamma$. Since $P$ is projective, there exists an object $N\in\dem{\Lambda}$ such that $P\cong {}_\Lambda(M,N)$ by Proposition~\ref{prop:fully_faithful}. Since $P$ is indecomposable, so is $N$. Hence there exists an isomorphism $S\cong S_N$.
\end{proof}

\begin{cor}\label{cor:gldim2}
$\Gamma$ has global dimension at most $2$. Or equivalently, $\gldim (\dem{\Gamma}) \leq d+1$.
\end{cor}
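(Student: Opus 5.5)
By Definition~\ref{dfn:global dimension}, it suffices to show that $\pvd\Gamma(\top\Gamma,\top\Gamma[n])=0$ for every $n>2$. The equivalence with $\gldim(\dem{\Gamma})\leq d+1$ then follows from Proposition~\ref{prop:global_dim}.

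Since $\top\Gamma$ is a finite direct sum of simple $H^0\Gamma$-modules, I would reduce to checking a single simple $S$ at a time. By Lemma~\ref{lem:simple_top}, every simple $S$ is isomorphic to $S_N$ for some indecomposable $N\in\dem{\Lambda}$. So it suffices to show that each $S_N$ lies in $\MK^{[-2,0]}(\MP_\Gamma)=\MP_\Gamma*\MP_\Gamma[1]*\MP_\Gamma[2]$. Once that is known, Lemma~\ref{lem:homology_functor} gives $\pvd\Gamma(\Gamma[j],S[n])=H^{n-j}(S)=0$ for $0\leq j\leq 2$ and $n>2$, because $S$ is concentrated in degree $0$. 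The vanishing of $\pvd\Gamma(S_N,S[n])$ then follows by induction along the extensions.

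To put $S_N$ in $\MK^{[-2,0]}(\MP_\Gamma)$, I would split into two cases.

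If $N=P$ is indecomposable projective, the preceding proposition gives a conflation ${}_\Lambda(M,\rad P)\to{}_\Lambda(M,P)\to S_P\dashrightarrow$ whose first two terms are projective in $\dem{\Gamma}$ by Proposition~\ref{prop:fully_faithful}. The corresponding triangle in $\pvd\Gamma$ then gives $S_P\in\MP_\Gamma*\MP_\Gamma[1]$.

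If $N$ is non-projective, I would take the Auslander--Reiten conflation $L\to N'\to N\dashrightarrow$ given by Theorem~\ref{thm:existance_AR_seq}. Proposition~\ref{prop:proj_resol_of_simple} gives a conflation ${}_\Lambda(M,L)\to{}_\Lambda(M,N')\to\rad{}_\Lambda(M,N)\dashrightarrow$. As in the diagram in the proof of that proposition, there is also a triangle $\rad{}_\Lambda(M,N)\to{}_\Lambda(M,N)\to S_N\to$ coming from the radical conflation. The first triangle puts $\rad{}_\Lambda(M,N)$ in $\MP_\Gamma*\MP_\Gamma[1]$. The second then puts $S_N$ in $\MP_\Gamma*(\MP_\Gamma*\MP_\Gamma[1])[1]=\MP_\Gamma*\MP_\Gamma[1]*\MP_\Gamma[2]$, using associativity of $*$. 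This also recovers Proposition~\ref{prop:simple_pd}.

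The main obstacle is making sure the objects being glued really are projective in $\dem{\Gamma}$ and that the conflations are honest triangles in $\pvd\Gamma$. The conflations are triangles by the description of the extriangulated structure, and projectivity of the terms ${}_\Lambda(M,X)$ is exactly Proposition~\ref{prop:fully_faithful}. Once these are in place, the argument is just bookkeeping of shifts in the $*$-products.
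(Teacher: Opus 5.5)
Your proposal is correct and follows the paper's route. The paper reduces to the simples $S_N$ via Lemma~\ref{lem:simple_top} and then cites Proposition~\ref{prop:simple_pd}, whose content is exactly the two resolutions you build from the radical conflation and Proposition~\ref{prop:proj_resol_of_simple}; you additionally spell out the passage from $S_N\in\MP_\Gamma*\MP_\Gamma[1]*\MP_\Gamma[2]$ to $\pvd\Gamma(\top\Gamma,\top\Gamma[>2])=0$, which the paper leaves implicit.
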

\begin{proof}
By Lemma~\ref{lem:simple_top}, any simple object in $\mod H^0\Gamma$ is isomorphic to $S_N$ for some indecomposable object $N\in\dem{\Lambda}$. Thus, the assertion follows from Proposition~\ref{prop:simple_pd}.
\end{proof}

\begin{proposition}
Let $I\in\dem{\Lambda}$ be an indecomposable injective object. Then the object $F_M(I):={}_\Lambda(M,I)$ is a projective-injective object in $\dem{\Gamma}$.
\end{proposition}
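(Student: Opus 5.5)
Projectivity of $F_M(I)$ is already given by Proposition~\ref{prop:fully_faithful}, since $F_M$ identifies $\dem{\Lambda}$ with $\MP_\Gamma$. So the content of the statement is that ${}_\Lambda(M,I)$ is injective in $\dem{\Gamma}$. I will check this through the injective objects of $\dem{\Gamma}$, which form $\MI_\Gamma=\add D_d\Gamma$. The plan is to identify ${}_\Lambda(M,I)$ with a direct summand of $D_d\Gamma$.

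\textbf{Step 1: compute $D_d\Gamma$.} Since $M$ is an additive generator, $\Gamma=\End_\Lambda(M)$, and as a right $\Gamma$-module $D_d\Gamma = D\bigl({}_\Lambda(M,M)\bigr)[d-1]$. I will express the dual of the hom complex in $\dem{\Lambda}$ via Serre-type duality. The derived Nakayama functor $\nu=-\lox_\Lambda D\Lambda$ gives, for $P\in\per\Lambda$ and $X\in\pvd\Lambda$,
$$D\rhom{\Lambda}{P,X}\cong \rhom{\Lambda}{X,\nu P}.$$
Taking $P=\Lambda$ gives $D\rhom{\Lambda}{\Lambda,X}\cong\rhom{\Lambda}{X,D\Lambda}$, and $D\Lambda\cong(D_d\Lambda)[-d]$.

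\textbf{Step 2: reduce to $I=D_d\Lambda$.} Every indecomposable injective $I$ is a summand of $D_d\Lambda$, so it suffices to treat $I=D_d\Lambda$. I want the natural isomorphism, in $\dem{\Gamma}$,
$${}_\Lambda(M,D_d\Lambda)\cong \tau^{\leq 0}\rhom{\Lambda}{M,D\Lambda}[d]\cong \tau^{\leq 0}\bigl(D\rhom{\Lambda}{\Lambda,M}[d]\bigr)=\tau^{\leq 0}\bigl(D(M)[d]\bigr),$$
where $D(M)$ is viewed as a right $\Gamma$-module through the left $\Gamma$-action on $M$.

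\textbf{Step 3: show $\tau^{\leq 0}(D(M)[d])$ is injective in $\dem{\Gamma}$.} I will do this with Lemma~\ref{lem_chara_proj_inj}(2): it suffices that ${}_\Gamma(-,X)$ sends $d$-monomorphisms to $d$-epimorphisms. For $X=D(M)[d-1]$-type objects, adjunction gives
$$\rhom{\Gamma}{Y,D(M)}\cong D\bigl(Y\lox_\Gamma M\bigr).$$
Truncating and using the injectivity characterization, this reduces to showing that $H^{-d+1}$ of $Y\lox_\Gamma M$ is left exact in $Y$ along $d$-monomorphisms. That in turn follows from $M$ being connective of amplitude $d$.

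Alternatively, and perhaps more cleanly, I can exhibit $D_d\Lambda$ inside $M$ and write $M=D_d\Lambda\oplus M'$. Then $D_d\Gamma=D\,{}_\Lambda(M,M)[d-1]$ contains the summand $D\,{}_\Lambda(D_d\Lambda,M)[d-1]$. Using the duality
$$D\,{}_\Lambda(D_d\Lambda,N)\cong{}_\Lambda(N,\Lambda)\text{-type formula}$$
(the $\dem{\Lambda}$-Serre duality between $\MP_\Lambda$ and $\MI_\Lambda$), this summand should be identified with ${}_\Lambda(M,D_d\Lambda)$. That identification shows ${}_\Lambda(M,D_d\Lambda)\in\add D_d\Gamma$ directly.

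\textbf{Main obstacle.} The hard step is the duality identification together with the truncations. One has to show that $D\bigl(\tau^{\leq 0}\rhom{\Lambda}{P,N}\bigr)[d-1]\cong\tau^{\leq 0}\rhom{\Lambda}{N,\nu P[d]}$ for $P\in\MP_\Lambda$ and $N\in\dem{\Lambda}$, compatibly with the $\Gamma$-action. Truncation does not commute with duality in general. Here, though, both sides have cohomology in $[-d+1,0]$: the left side because $N\in\dem{\Lambda}$ and $P$ is projective, so $\rhom{\Lambda}{P,N}$ is already concentrated in $[-d+1,0]$. I expect this amplitude observation to settle it.
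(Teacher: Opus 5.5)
There is a genuine gap. Your Step~2 is essentially the paper's key isomorphism, up to an off-by-one. With the paper's definition $D_d=D(-)[d-1]$ one gets ${}_\Lambda(M,D_d\Lambda)=\tau^{\leq 0}\bigl(\rhom{\Lambda}{M,D\Lambda}[d-1]\bigr)\cong D(M)[d-1]=D_d M$. This already lies in $\dem{\Gamma}$, whereas your $\tau^{\leq 0}(D(M)[d])$ has cohomology in degree $-d$. So the truncation issue you flag as the main obstacle is not where the difficulty lies.

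The real content is that $D_d M$ is injective in $\dem{\Gamma}$, and Step~3 does not establish it. Your reduction via Lemma~\ref{lem_chara_proj_inj} is correct: it suffices that $H^{-d+1}(Y\lox_\Gamma M)\to H^{-d+1}(Y'\lox_\Gamma M)$ is injective for every $d$-monomorphism $Y\to Y'$. But this cannot follow from the amplitude of $M$, since such an argument would apply to every $M$. For $d=1$ the claim is that $M$ is flat over $\End_\Lambda(M)$, which fails e.g.\ for $M=P_1\oplus S_1$ over $k(1\to 2)$.

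The missing idea is that $M$ is projective as a left $\Gamma$-module, and this is exactly where the additive-generator hypothesis enters. A section $s\colon\Lambda\to M$ makes $s^*\colon{}_\Lambda(M,M)\to{}_\Lambda(\Lambda,M)\cong M$ a retraction in $\dem{\Gamma\op}$. Hence $M\in\add\Gamma$ there, and so $D_d M\in\add D_d\Gamma=\MI_\Gamma$. This is the paper's proof. With it your Step~3 also becomes immediate, since $Y\lox_\Gamma M$ is then naturally a direct summand of $Y$.

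Your alternative route splits off the wrong summand of $M$. The duality you quote has the wrong variance. The Serre-type duality pairs ${}_\Lambda(\Lambda,N)$ with ${}_\Lambda(N,D_d\Lambda)$, which is your own last-paragraph formula with $P=\Lambda$. It does not pair ${}_\Lambda(D_d\Lambda,N)$ with ${}_\Lambda(N,\Lambda)$.

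Accordingly, the summand $D\,{}_\Lambda(D_d\Lambda,M)[d-1]$ of $D_d\Gamma$ is in general neither ${}_\Lambda(M,D_d\Lambda)$ nor even projective. Take $d=1$, $\Lambda=k(1\to 2)$ and $M=P_2\oplus P_1\oplus S_1$. Then $\dim_k D\,{}_\Lambda(D\Lambda,M)=3$ while $\dim_k{}_\Lambda(M,D\Lambda)=4$, and the former contains the simple injective $D\,{}_\Lambda(S_1,M)$, which is not projective.

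The correct decomposition is $M\cong\Lambda\oplus M'$. Its corresponding summand $D\,{}_\Lambda(\Lambda,M)[d-1]=D_d M$ of $D_d\Gamma$ is ${}_\Lambda(M,D_d\Lambda)$ by Step~2, which again is the paper's argument.
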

\begin{proof}
It suffices to treat the case $I=D_d\Lambda$. Since $M$ is an additive generator of $\dem{\Lambda}$ by assumption, we have a section $s\colon \Lambda\to M$ in $\dem{\Lambda}$. By applying the functor 
$$\hom{\Lambda}{-,M}\colon (\dem{\Lambda})\op\to \dem{\Gamma\op}$$
to the morphism $s$, we obtain a retraction 
$s^*\colon \hom{\Lambda}{M,M}\to \hom{\Lambda}{\Lambda,M}$ in $\dem{\Gamma\op}$. Thus, $M$ is a projective object in $\dem{\Lambda\op}$. Since there exists isomorphisms in $\dem{\Gamma}$:
$$\hom{\Lambda}{M,D_d\Lambda}\cong \hom{\Lambda}{\Lambda, D_d M}\cong D_d M$$
and $D_d M$ is an injective object in $\dem{\Gamma}$, we have that $\hom{\Lambda}{M,D_d\Lambda}$ is projective-injective in $\dem{\Gamma}$.
\end{proof}

\begin{lemma}
Let $N$ be an object in $\dem{\Lambda}$. If $F_M(N)$ is injective in $\dem{\Gamma}$, then $N$ is injective in $\dem{\Lambda}$.
\end{lemma}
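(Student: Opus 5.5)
We prove the contrapositive: assume $N$ is not injective in $\dem{\Lambda}$ and show that $F_M(N)$ is not injective in $\dem{\Gamma}$. It suffices to treat $N$ indecomposable, since $F_M$ is additive and an object is injective if and only if all its summands are. For indecomposable non-injective $N$, Theorem~\ref{thm:existance_AR_seq} gives an Auslander--Reiten conflation
$$N\xrightarrow{f} E\xrightarrow{g} C\dashrightarrow$$
in $\dem{\Lambda}$ with $C$ indecomposable and non-projective.

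Next we apply Proposition~\ref{prop:proj_resol_of_simple} to $C$. It yields a conflation in $\dem{\Gamma}$
$$F_M(N)\xrightarrow{f_*}F_M(E)\xrightarrow{g_*'}\rad F_M(C)\dashrightarrow .$$
If $F_M(N)$ were injective, this conflation would split. In particular $f_*$ would be a section in $\dem{\Gamma}$, i.e.\ there would be $r\colon F_M(E)\to F_M(N)$ with $r\circ f_*=\mathrm{id}$. By Proposition~\ref{prop:fully_faithful}, $F_M$ is fully faithful, so $r=F_M(r_0)$ for some $r_0\colon E\to N$, and faithfulness gives $r_0\circ f=\mathrm{id}_N$. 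Then $f$ is a section in $\dem{\Lambda}$, contradicting the fact that the first map of an Auslander--Reiten conflation is left almost split and hence not a split monomorphism (otherwise $N$, the indecomposable starting term, would be a summand and the conflation would split, against $C$ being non-projective and $g$ being right minimal).

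The main point to check is that a conflation whose first term is injective in the extriangulated category $\dem{\Gamma}$ really splits. This holds because $\BE(\rad F_M(C),F_M(N))=0$ when $F_M(N)$ is injective, so the extension realizing the conflation is zero and the sequence is split. Since the conflation is a triangle in $\pvd\Gamma$ with zero connecting morphism, the splitting gives the retraction $r$ directly, which completes the argument.
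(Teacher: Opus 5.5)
Your proof is correct, but it takes a different and heavier route than the paper's.

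\textbf{The paper's route.} The paper argues directly for an arbitrary $N$. It chooses an inflation $N\to I$ with $I$ injective in $\dem{\Lambda}$; such an inflation is the same thing as a $d$-monomorphism. By Lemma~\ref{lem:n-mono_epi_chara}, the induced map $F_M(N)\to F_M(I)$ is again a $d$-monomorphism, hence an inflation in $\dem{\Gamma}$ by Corollary~\ref{cor:defl_chara}. Injectivity of $F_M(N)$ makes this map split, and full faithfulness of $F_M$ (Proposition~\ref{prop:fully_faithful}) carries the retraction back to $\dem{\Lambda}$. So $N$ is a summand of $I$.

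\textbf{Your route.} You pass to the contrapositive, reduce to indecomposable non-injective $N$, and take the Auslander--Reiten conflation $N\xrightarrow{f}E\to C$ (Theorem~\ref{thm:existance_AR_seq}). Proposition~\ref{prop:proj_resol_of_simple} then shows that $F_M(f)=f_*$ is an inflation in $\dem{\Gamma}$. Splitting it and lifting the retraction yields a contradiction with the non-splitness of the AR conflation. Every step checks out:
\begin{itemize}
\item the reduction to indecomposables is valid, since injectives are closed under summands and $\dem{\Lambda}$ is Krull--Schmidt;
\item by uniqueness, the AR conflation starting at $N$ is the one ending at $C$, so Proposition~\ref{prop:proj_resol_of_simple} really produces $F_M(N)\xrightarrow{f_*}F_M(E)$;
\item the final contradiction is simply that an AR conflation is non-split, so $f$ is not a section.
\end{itemize}

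\textbf{Comparison.} The only input your argument actually uses is that $F_M$ sends one particular inflation out of $N$ to an inflation. Lemma~\ref{lem:n-mono_epi_chara} gives this for every inflation. Applying it to $N\to I$, as the paper does, removes the contrapositive, the Krull--Schmidt reduction, and the appeal to Auslander--Reiten theory. It also avoids depending on Proposition~\ref{prop:proj_resol_of_simple}, which is one of the more delicate steps of this section. Your version has the mild merit of staying inside the Auslander--Reiten picture already set up for the global-dimension computation, but it buys no extra generality.
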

\begin{proof}
Take a $d$-monomorphism $N\to I$ in $\dem{\Lambda}$ where $I$ is an injective object in $\dem{\Lambda}$. Applying the functor $F_M$ yields a $d$-monomorphism $F_M(N)\to F_M(I)$ in $\dem{\Gamma}$ by Proposition~\ref{lem:n-mono_epi_chara}. Since $F_M(N)$ is injective in $\dem{\Gamma}$, the morphism $F_M(N)\to F_M(I)$ is a split monomorphism. Thus, the morphism $N\to I$ is also a split monomorphism by Proposition~\ref{prop:fully_faithful}. Therefore the object $N$ is injective in $\dem{\Lambda}$.  
\end{proof}

Therefore, we obtain the following proposition.

\begin{proposition}
The functor $F_M\colon \dem{\Lambda}\to \dem{\Gamma}$ induces an equivalence between $\MI_\Lambda$ and $\MP_\Gamma\cap \MI_\Gamma$. Moreover, $\add \bigl(F_M(\Lambda)\bigr)=\MP_\Gamma\cap \MI_\Gamma$ holds.
\end{proposition}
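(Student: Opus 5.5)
The plan is to combine the two preceding results with the full faithfulness of $F_M$ from Proposition~\ref{prop:fully_faithful}. Since $F_M$ is fully faithful, it is enough to show two things. First, $F_M$ sends $\MI_\Lambda$ into $\MP_\Gamma\cap\MI_\Gamma$. Second, every object of $\MP_\Gamma\cap\MI_\Gamma$ is isomorphic to $F_M(I)$ for some $I\in\MI_\Lambda$. Together these make the restriction an equivalence $\MI_\Lambda\xrightarrow{\sim}\MP_\Gamma\cap\MI_\Gamma$.

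For the first point, take an indecomposable injective $I\in\dem{\Lambda}$. The preceding proposition shows that $F_M(I)$ is projective-injective. Both $\MI_\Lambda$ and $\MP_\Gamma\cap\MI_\Gamma$ are closed under finite direct sums and direct summands, and $F_M$ is additive. Since $\dem{\Lambda}$ is Krull--Schmidt, the claim for all of $\MI_\Lambda$ follows.

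For essential surjectivity, let $X\in\MP_\Gamma\cap\MI_\Gamma$. Because $X$ is projective, Proposition~\ref{prop:fully_faithful} gives $N\in\dem{\Lambda}$ with $X\cong F_M(N)$. Then $F_M(N)$ is injective, so the lemma just proved shows that $N$ is injective in $\dem{\Lambda}$. Hence $X$ lies in the image of $\MI_\Lambda$.

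The ``moreover'' part follows from the identity $\MI_\Lambda=\add D_d\Lambda$ together with the first part: $F_M$ is a fully faithful additive functor, so $\MP_\Gamma\cap\MI_\Gamma=F_M(\add D_d\Lambda)=\add F_M(D_d\Lambda)$. The statement as printed reads $\add\bigl(F_M(\Lambda)\bigr)$. I would check whether $D_d\Lambda$ is intended, since that is what the argument produces. If $F_M(\Lambda)$ really is intended, it would have to be read as the dual object $\hom{\Lambda}{\Lambda,M}$ over $\Gamma\op$, which is the image of $\Lambda$ under the contravariant functor $\hom{\Lambda}{-,M}\colon(\dem{\Lambda})\op\to\dem{\Gamma\op}$. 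That reading would need $D_d$ to transport it, via the isomorphism $\hom{\Lambda}{M,D_d\Lambda}\cong D_d M$ used in the previous proof.

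I expect the main obstacle to be only this bookkeeping in the final identification; the equivalence itself is a formal consequence of full faithfulness and the two preceding results.
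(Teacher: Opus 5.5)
Your argument is correct and is essentially the paper's own. The paper obtains the proposition directly from the preceding proposition, the preceding lemma, and Proposition~\ref{prop:fully_faithful}, combined exactly as you do.

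You are also right about the ``moreover'' clause: since $\MI_\Lambda=\add D_d\Lambda$, it should read $\add\bigl(F_M(D_d\Lambda)\bigr)=\MP_\Gamma\cap\MI_\Gamma$. By the lemma, $F_M(\Lambda)$ is injective only if $\Lambda\in\MI_\Lambda$, which fails in general (e.g.\ for $d\geq 2$ and $\Lambda$ concentrated in degree $0$). So the printed $F_M(\Lambda)$ is a typo, and your alternative reading via $\hom{\Lambda}{\Lambda,M}$ is unnecessary.
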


We next show that $\Gamma$ has dominant dimension at least $d+1$.
\begin{lemma}\label{lem:calc}
For any object $N$ in $\dem{\Lambda}$ and $X$ in $\pvd\Lambda$, we have the isomorphism:
$$\tauleq{0}\bigl(\rhom{\Lambda}{N,\tauleq{0} X}\bigr)\cong \tauleq{0}\bigl(\rhom{\Lambda}{N,X}\bigr)$$
where the morphism is induced by $\tauleq{0} X\to X$ in $\pvd\Lambda$.
\end{lemma}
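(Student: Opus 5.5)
The plan is to compare the two complexes through the truncation triangle of $X$ and show that the difference is invisible after applying $\tauleq{0}$. Consider the triangle in $\pvd\Lambda$
$$\tauleq{0} X\to X\to \taugeq{1} X\dashrightarrow.$$
Applying the triangulated functor $\rhom{\Lambda}{N,-}$ gives a triangle
$$\rhom{\Lambda}{N,\tauleq{0}X}\to\rhom{\Lambda}{N,X}\to\rhom{\Lambda}{N,\taugeq{1}X}\dashrightarrow$$
in $\der{k}$. The first map is the one induced by $\tauleq{0}X\to X$. Since $\tauleq{0}$ on $\der{k}$ is given by taking cohomology in degrees $\leq 0$, it suffices to show two things: the induced map on $H^i$ is an isomorphism for all $i\leq -1$, and it is an isomorphism for $i=0$.

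By the long exact cohomology sequence, both follow once we know
$$H^i\bigl(\rhom{\Lambda}{N,\taugeq{1}X}\bigr)=0 \quad\text{for all } i\leq 0.$$
This gives isomorphisms on $H^{\leq -1}$. In degree $0$ it gives surjectivity, because $H^0$ of the third term vanishes. Injectivity in degree $0$ follows from $H^{-1}$ of the third term vanishing.

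To prove the vanishing, note that
$$H^i\bigl(\rhom{\Lambda}{N,\taugeq{1}X}\bigr)\cong \pvd\Lambda\bigl(N,(\taugeq{1}X)[i]\bigr).$$
For $i\leq 0$, the object $(\taugeq{1}X)[i]$ lies in $(\pvd\Lambda)^{\geq 1-i}\subset(\pvd\Lambda)^{\geq 1}$. Meanwhile $N\in\dem{\Lambda}\subset(\pvd\Lambda)^{\leq 0}$. By the $t$-structure axiom of Remark~\ref{rmk:t-str}, morphisms from $(\pvd\Lambda)^{\leq 0}$ to $(\pvd\Lambda)^{\geq 1}$ vanish, so these groups are zero.

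There is no serious obstacle. The only point needing care is degree $0$: there we must use the vanishing of both $H^0$ and $H^{-1}$ of the third term, rather than just $H^{-1}$. We also use that the truncation of the cone vanishes in degrees $\leq 0$, and not merely that the cone is concentrated in positive degrees up to a shift. Finally, since $\rhom{\Lambda}{N,X}$ has finite-dimensional cohomology, $\tauleq{0}$ of it makes sense in $\pvd k$, and the resulting isomorphism is natural.
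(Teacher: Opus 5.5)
Your proof is correct and is the same argument as the paper's, whose one-line proof is exactly the vanishing $H^{-i}\bigl(\rhom{\Lambda}{N,\taugeq{1}X}\bigr)\cong\pvd\Lambda(N[i],\taugeq{1}X)=0$ for $i\geq 0$. The paper leaves the truncation triangle and long exact sequence implicit, and you have written them out. One small correction to your closing side remark: only each individual $H^i$ of $\rhom{\Lambda}{N,X}$ is finite-dimensional, not its total cohomology (which can be infinite-dimensional when $\Lambda$ has infinite global dimension); it is the truncation $\tauleq{0}$ that is bounded and hence lies in $\pvd k$.
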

\begin{proof}
This follows from the vanishing below for any $i\geq 0$:
\begin{equation*}
H^{-i}\bigl(\rhom{\Lambda}{N,\taugeq{1} X}\bigr)\cong \pvd\Lambda(N[i],\taugeq{1}X)=0\qedhere
\end{equation*}
\end{proof}

\begin{cor}
For any object $N$ in $\dem{\Lambda}$ and its injective copresentation $I\in \RK^{[0,d]}(\MI_\Lambda)$ (see Definition~\ref{dfn:proj_pres_obj}), $\rhom{\Lambda}{M,I}$ is a projective-injective copresentation of $F(N)$ in $\dem{\Gamma}$.
\end{cor}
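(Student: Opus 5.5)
The plan is to check the two defining properties of a projective-injective copresentation separately:
\begin{enumerate}
\item $\rhom{\Lambda}{M,I}$ lies in $\MK^{[0,d]}(\MP_\Gamma\cap\MI_\Gamma)$;
\item $\tauleq{0}\rhom{\Lambda}{M,I}\cong F_M(N)$ in $\dem{\Gamma}$.
\end{enumerate}
Throughout, $\rhom{\Lambda}{M,-}$ is regarded as a triangle functor $\pvd\Lambda\to\der{\Gamma}$, via restriction along the DG-algebra map $\Gamma=\tauleq{0}\rend{\Lambda}{M}\to\rend{\Lambda}{M}$. It lands in $\pvd\Gamma$ because $M\in\pvd\Lambda$ and $\pvd\Lambda$ is hom-finite, so $\rhom{\Lambda}{M,X}$ has finite-dimensional total cohomology for $X\in\pvd\Lambda$.

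For (1), I first treat a single injective $J\in\MI_\Lambda$. By Proposition~\ref{prop:ext}, $H^n\bigl(\rhom{\Lambda}{M,J}\bigr)\cong\pvd\Lambda(M,J[n])\cong\BE^n(M,J)=0$ for all $n\geq1$, since $J$ is injective in $\dem{\Lambda}$. Hence $\rhom{\Lambda}{M,J}\cong\tauleq{0}\rhom{\Lambda}{M,J}=F_M(J)$. By the preceding proposition, $F_M(J)\in\MP_\Gamma\cap\MI_\Gamma$, so $\rhom{\Lambda}{M,-}$ sends $\MI_\Lambda$ into $\MP_\Gamma\cap\MI_\Gamma$. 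Now $I\in\MI_\Lambda[-d]*\MI_\Lambda[-d+1]*\cdots*\MI_\Lambda$, and $\rhom{\Lambda}{M,-}$ is a triangle functor, so it commutes with shifts and preserves the iterated extension $*$. Therefore
$$\rhom{\Lambda}{M,I}\in(\MP_\Gamma\cap\MI_\Gamma)[-d]*\cdots*(\MP_\Gamma\cap\MI_\Gamma)=\MK^{[0,d]}(\MP_\Gamma\cap\MI_\Gamma).$$

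For (2), I apply Lemma~\ref{lem:calc} with $X=I$:
$$\tauleq{0}\rhom{\Lambda}{M,I}\cong\tauleq{0}\rhom{\Lambda}{M,\tauleq{0}I}\cong\tauleq{0}\rhom{\Lambda}{M,N}=F_M(N),$$
using $\tauleq{0}I\cong N$, which holds because $I$ is an injective copresentation of $N$. The isomorphism in Lemma~\ref{lem:calc} is induced by the morphism $\tauleq{0}I\to I$, hence by a morphism of $\Lambda$-modules. It is therefore natural in the $M$-variable and compatible with the $\Gamma$-action, so it is an isomorphism in $\dem{\Gamma}$ and not just of complexes.

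The main obstacle is the bookkeeping of module structures. One must make sure that the equality $\rhom{\Lambda}{M,J}\cong F_M(J)$ holds in $\der{\Gamma}$, with the $\Gamma$-structure coming from the truncated endomorphism algebra. I would handle this by working throughout with the DG-bimodule model of $M$ over $\Gamma$ and $\Lambda$, so that all truncation maps are $\Gamma$-linear. The cohomology-vanishing arguments above then upgrade to isomorphisms in $\der{\Gamma}$ with no further work.
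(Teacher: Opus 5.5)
Your proposal is correct and follows the paper's proof. Like the paper, you show that $\rhom{\Lambda}{M,-}$ sends each injective $J$ to $F_M(J)\in\MP_\Gamma\cap\MI_\Gamma$ by killing positive cohomology (you justify this via Proposition~\ref{prop:ext} and injectivity of $J$, where the paper only asserts it for $D_d\Lambda$), extend along the iterated extension, and then apply Lemma~\ref{lem:calc} to get $\tauleq{0}\rhom{\Lambda}{M,I}\cong F_M(N)$. One harmless slip: hom-finiteness does not put $\rhom{\Lambda}{M,X}$ in $\pvd\Gamma$ for arbitrary $X\in\pvd\Lambda$ (take $\Lambda=k[x]/(x^2)$, $d=1$, $M=\Lambda\oplus k$, $X=k$, where $\pvd\Lambda(k,k[n])\neq 0$ for all $n\geq 0$), but you never need this, since step (1) already exhibits $\rhom{\Lambda}{M,I}$ as an iterated extension of shifts of objects of $\dem{\Gamma}$.
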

\begin{proof}
We have to show that $\rhom{\Lambda}{M,I}$ lies in $\RK^{[0,d]}(\MI_\Lambda\cap\MP_\Gamma)$ for any $I\in \RK^{[0,d]}(\MI_\Lambda)$. This follows from $\rhom{\Lambda}{M,D_d \Lambda}\cong \tauleq{0}\rhom{\Lambda}{M,D_d \Lambda}=\hom{\Lambda}{M,D_d \Lambda}$.

Let $N$ be any object in $\dem{\Lambda}$ and take its injective copresentation $I\in \RK^{[0,d]}(\MI_\Lambda)$. Then there exist the following isomorphisms:
$$F_M(N):= {}_\Lambda(M,N)= \tau^{\leq 0}\rhom{\Lambda}{M,N} \cong \tau^{\leq 0}\rhom{\Lambda}{M,\tauleq{0}I}\cong \tau^{\leq 0}\rhom{\Lambda}{M,I}$$
where the last isomorphism is obtained by Lemma~\ref{lem:calc}. 
\end{proof}

\begin{proposition}
$\Gamma$ has dominant dimension at least $d+1$.
\end{proposition}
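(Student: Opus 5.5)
We will use the characterization in Lemma~\ref{lem:domdim}: it suffices to find an object $J^\bullet\in\MK^{[0,d]}(\MI_\Gamma\cap\MP_\Gamma)$ with $\tau^{\leq 0}J^\bullet\cong\Gamma$ in $\dem{\Gamma}$. Since $\Gamma=\End_\Lambda(M)=\tauleq{0}\rend{\Lambda}{M}=\hom{\Lambda}{M,M}=F_M(M)$, the plan is to take an injective copresentation of $M$ itself in $\dem{\Lambda}$ and transport it along $\rhom{\Lambda}{M,-}$, as in the preceding corollary.

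First, by Proposition~\ref{prop:proj_inj_present} (essential surjectivity of $\tau^{\leq 0}\colon\MK^{[0,d]}(\MI_\Lambda)\to\dem{\Lambda}$), choose $I^\bullet\in\MK^{[0,d]}(\MI_\Lambda)$ with $\tau^{\leq 0}I^\bullet\cong M$. Next, set $J^\bullet:=\rhom{\Lambda}{M,I^\bullet}$. Since $\rhom{\Lambda}{M,-}$ is a triangle functor, it sends $\MI_\Lambda[-d]*\cdots*\MI_\Lambda$ into $\rhom{\Lambda}{M,\MI_\Lambda}[-d]*\cdots*\rhom{\Lambda}{M,\MI_\Lambda}$. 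By the first part of the proof of the preceding corollary, $\rhom{\Lambda}{M,D_d\Lambda}\cong\hom{\Lambda}{M,D_d\Lambda}$ (its cohomology is already in nonpositive degrees because $\pvd\Lambda(M,D_d\Lambda[i])\cong D\,H^{d-1-i}(M)$-type vanishing for $i>0$), and by the proposition that $F_M$ identifies $\MI_\Lambda$ with $\MP_\Gamma\cap\MI_\Gamma$, each $\rhom{\Lambda}{M,I}$ with $I\in\MI_\Lambda$ lies in $\MP_\Gamma\cap\MI_\Gamma$. Hence $J^\bullet\in\MK^{[0,d]}(\MP_\Gamma\cap\MI_\Gamma)$.

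Finally, by the preceding corollary (which rests on Lemma~\ref{lem:calc}), $\tau^{\leq 0}J^\bullet=\tau^{\leq0}\rhom{\Lambda}{M,I^\bullet}\cong\tau^{\leq 0}\rhom{\Lambda}{M,\tau^{\leq0}I^\bullet}\cong\hom{\Lambda}{M,M}=\Gamma$, viewed as right $\Gamma$-modules. Lemma~\ref{lem:domdim} then gives $\domdim\Gamma\geq d+1$.

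The main point needing care is the compatibility of module structures: one must check that $\rhom{\Lambda}{M,-}$ lands in $\der{\Gamma}$ (via the bimodule structure of $M$ over $\Gamma=\tauleq{0}\rend{\Lambda}{M}$, restricting along $\Gamma\to\rend{\Lambda}{M}$) and that the isomorphism $\tau^{\leq0}J^\bullet\cong\Gamma$ is one of $\Gamma$-modules rather than just of complexes; the isomorphisms in Lemma~\ref{lem:calc} are induced by the natural map $\tau^{\leq0}I^\bullet\to I^\bullet$, hence are $\Gamma$-linear, which should settle it. The membership of $J^\bullet$ in the extension-closed iterated subcategory is the other step to verify, but it is formal from exactness of $\rhom{\Lambda}{M,-}$.
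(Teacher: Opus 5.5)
Your proposal is correct and is essentially the paper's argument: take an injective copresentation $I^\bullet$ of $M$, use the preceding corollary to see that $\rhom{\Lambda}{M,I^\bullet}$ is a projective-injective copresentation of $F_M(M)=\Gamma$, and conclude by Lemma~\ref{lem:domdim}. The paper additionally notes that $(\taugeq{1}\rhom{\Lambda}{M,I^\bullet})[d]$ is injective because $\gldim\Gamma\leq 2$, but that lemma does not need this; one small slip in your parenthetical is that $\pvd\Lambda(M,D_d\Lambda[i])\cong D\,H^{1-d-i}(M)$, not $D\,H^{d-1-i}(M)$, and it is this index that gives the vanishing for $i>0$.
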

\begin{proof}
Take an injective copresentation $I^\bullet\in K^{[0,d]}(\MI_\Lambda)$ of $M\in\dem{\Lambda}$. Then by the above corollary, $\rhom{\Lambda}{M,I^\bullet}$ is a projective-injective copresentation of $F(M)=\Gamma$ in $\dem{\Gamma}$. Since $\Gamma$ has global dimension at most $2$, $(\taugeq{1}F(I))[d]$ is in $\dem{\Gamma}$ and it is injective by Proposition~\ref{prop:inj_dim_d+1}. Thus the dominant dimension of $\Gamma$ is at least $d+1$.
\end{proof}

\begin{proposition}\label{prop:1to2}
The following holds:
\begin{enumerate}
\item $\MP_\Gamma\cap \MI_\Gamma$ has an additive generator $F_M(\Lambda)$ which satisfies $\End_{\Gamma}(F_M(\Lambda))\cong \Lambda$.
\item $\Gamma$ is an Auslander $d$-truncated DG-algebra.
\end{enumerate}
\end{proposition}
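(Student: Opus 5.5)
Part (2) is the cheapest, since it only collects earlier results. Corollary~\ref{cor:gldim2} gives $\gldim \Gamma\leq 2$. The preceding proposition gives $\domdim\Gamma\geq d+1$, with the projective-injective copresentation $\rhom{\Lambda}{M,I^\bullet}$ lying in $\MK^{[0,d]}(\MP_\Gamma\cap\MI_\Gamma)$. Hence condition (iii) of Definition~\ref{dfn:auslander} holds via Lemma~\ref{lem:domdim}, and $\Gamma$ is Auslander.

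For part (1), I first show that $F_M(D_d\Lambda)$ is an additive generator of $\MP_\Gamma\cap\MI_\Gamma$. The proposition just before Lemma~\ref{lem:calc} says $F_M$ restricts to an equivalence $\MI_\Lambda\simeq \MP_\Gamma\cap\MI_\Gamma$. Since $\MI_\Lambda=\add D_d\Lambda$, this gives $\MP_\Gamma\cap\MI_\Gamma=\add F_M(D_d\Lambda)$. The statement writes $F_M(\Lambda)$; I read this as the injective generator $F_M(D_d\Lambda)\cong D_d M$ (as computed in the proof that $F_M(I)$ is projective-injective), and I would make that identification explicit.

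Next I compute $\End_\Gamma(F_M(D_d\Lambda))=\tauleq{0}\rend{\Gamma}{F_M(D_d\Lambda)}$. The approach is to upgrade Proposition~\ref{prop:fully_faithful} from the level of $H^0$ to the level of hom complexes. I would show that for $X,Y\in\dem{\Lambda}$ the natural map
\[
\hom{\Lambda}{X,Y}\to \hom{\Gamma}{F_M X,F_M Y}
\]
induced by the DG-functor $\tauleq{0}\rhom{\Lambda}{M,-}$ on the enhancement $\demdg{\Lambda}$ is a quasi-isomorphism. Its $H^{-i}$ component is
\[
\pvd\Lambda(X,Y[-i])\to\pvd\Gamma(F_MX,F_MY[-i]).
\]
Write $Y[-i]$ via truncation, noting $\tauleq{0}(Y[-i])\in\dem{\Lambda}$ up to the shift bookkeeping, and use Lemma~\ref{lem:calc} to reduce to the case $X=M$. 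In that case both sides equal $H^{-i}\rhom{\Lambda}{M,Y}$: the right side because $F_M M=\Gamma$, and Lemma~\ref{lem:homology_functor}-type reasoning in the DG setting gives $\rhom{\Gamma}{\Gamma,F_MY}\cong F_MY$. Additivity then extends the quasi-isomorphism to $X\in\add M=\dem{\Lambda}$. Taking $X=Y=D_d\Lambda$ gives $\End_\Gamma(F_M D_d\Lambda)\simeq\End_\Lambda(D_d\Lambda)$.

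The remaining step is $\End_\Lambda(D_d\Lambda)\simeq\Lambda$. The duality $D_d$ identifies $\End_\Lambda(D_d\Lambda)$ with $\tauleq{0}\rend{\Lambda\op}{\Lambda}^{\op}\simeq \tauleq{0}\Lambda\simeq\Lambda$, using that $\Lambda$ is $d$-truncated and connective. Equivalently, the derived Nakayama functor is fully faithful on $\MK^{[-d,0]}(\MP_\Lambda)$. The identification is up to quasi-isomorphism of DG-algebras, which suffices for Morita purposes.

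The main obstacle is the DG-level fully faithfulness: I must show the hom-complex map is a quasi-isomorphism in negative degrees, not only on $H^0$. The delicate point is that $\Gamma$ is the truncation $\tauleq{0}\rend{\Lambda}{M}$ rather than the full derived endomorphism algebra, so the comparison $\rhom{\Gamma}{\Gamma,-}$ must be checked carefully. I would try first to restrict to $X$ a summand of $M$, where the identification is tautological from the bimodule structure, and then pass to $\add M$.
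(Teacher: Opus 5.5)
Your proposal is correct and takes the same route as the paper, which gives no separate proof but assembles Corollary~\ref{cor:gldim2}, the dominant-dimension proposition (via condition (iii) and Lemma~\ref{lem:domdim}), and the equivalence $\MI_\Lambda\simeq\MP_\Gamma\cap\MI_\Gamma$ induced by $F_M$. You are also right to read the generator as $F_M(D_d\Lambda)\cong D_dM$, since $F_M(\Lambda)$ is projective but in general not injective (already for $d=1$ and the path algebra of $1\to 2$); your derived Yoneda argument (tautological for $X=M$, then additivity over $\add M=\dem{\Lambda}$, so the appeal to Lemma~\ref{lem:calc} is unnecessary) together with $\End_\Lambda(D_d\Lambda)\simeq\Lambda$ via $D$ correctly supplies the DG-level endomorphism identification that the paper leaves implicit, since Proposition~\ref{prop:fully_faithful} is stated only on $H^0$.
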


We conclude this part by discussing the well-definedness of the correspondence from (i) to (ii). 

\begin{lemma}\label{lem:well-def}
The following holds:
\begin{enumerate}
\item Let $M'$ be another additive generator of $\dem{\Lambda}$ and put $\Gamma':=\End_{\Lambda}(M')$. Then the Auslander $d$-truncated DG-algebras $\Gamma$ and $\Gamma'$ are Morita equivalent.
\item Let $\Lambda'$ be a $d$-truncated proper connective DG-algebra which is Morita equivalent to $\Lambda$ and let $M'$ be an additive generator of $\dem{\Lambda'}$. Put $\Gamma':=\End_{\Lambda'}(M')$. Then the Auslander $d$-truncated DG-algebras $\Gamma$ and $\Gamma'$ are Morita equivalent.
\end{enumerate}
\end{lemma}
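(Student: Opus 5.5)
By Remark~\ref{rmk:morita}, two $d$-truncated proper connective DG-algebras are Morita equivalent exactly when the DG-subcategories of projective objects in $\demdg{-}$ are quasi-equivalent. So the plan is to compare the DG-categories of projectives in $\demdg{\Gamma}$ and $\demdg{\Gamma'}$, which are of the form $\add\Gamma$ and $\add\Gamma'$, and to identify each with a DG-category built from $\demdg{\Lambda}$ alone.

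For (1), I would use the DG-enhancement $\demdg{\Lambda}$ of $\dem{\Lambda}$. Let $\mathscr{A}$ be the full DG-subcategory of $\demdg{\Lambda}$ on a set of representatives of the indecomposable objects; since $\Lambda$ is representation-finite this set is finite. Then $\Gamma=\End_\Lambda(M)=\tauleq{0}\rend{\Lambda}{M}$ is the endomorphism DG-algebra of $M$ in $\demdg{\Lambda}$. The full DG-subcategory $\mathscr{P}_\Gamma\subset\demdg{\Gamma}$ of projectives is quasi-equivalent to the closure of the one-object DG-category $\Gamma$ under finite sums and summands, i.e.\ its additive-idempotent closure inside $\tauleq{0}\mathscr{D}(\Gamma)$.

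The key step is to show that the DG-functor $\tauleq{0}\rhom{\Lambda}{M,-}$ restricted to $\add M=\dem{\Lambda}$ gives a quasi-equivalence from the full DG-subcategory of $\demdg{\Lambda}$ on $\add M$ onto $\mathscr{P}_\Gamma$. This is the DG-level version of Proposition~\ref{prop:fully_faithful}. Quasi-fully-faithfulness reduces, by additivity, to the morphism complex from $M$ to $M$, where it is the identity of $\Gamma$ up to truncation. Here one uses that morphism complexes in $\demdg{\Lambda}$ are already $\tauleq{0}$-truncated, and that $\tauleq{0}\rhom{\Gamma}{\Gamma,-}$ is the identity on connective objects. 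Essential surjectivity on $H^0$ is exactly the second half of Proposition~\ref{prop:fully_faithful}. Thus $\mathscr{P}_\Gamma\simeq\demdg{\Lambda}$, and likewise $\mathscr{P}_{\Gamma'}\simeq\demdg{\Lambda}$, since $M'$ is also an additive generator. Remark~\ref{rmk:morita} then gives (1).

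For (2), a quasi-equivalence $\Phi\colon\demdg{\Lambda}\to\demdg{\Lambda'}$ sends the additive generator $M$ to an additive generator $\Phi(M)$ of $\dem{\Lambda'}$, because $H^0(\Phi)$ is an equivalence. It also induces a quasi-isomorphism of DG-algebras $\End_\Lambda(M)\simeq\End_{\Lambda'}(\Phi(M))$, since the endomorphism complexes in $\demdg{}$ are the truncated derived endomorphism algebras. Quasi-isomorphic DG-algebras have quasi-equivalent $\demdg{}$, so $\Gamma$ is Morita equivalent to $\End_{\Lambda'}(\Phi(M))$. By (1), the latter is Morita equivalent to $\Gamma'$.

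The main obstacle is making the DG-level comparison rigorous: showing that $\tauleq{0}\rhom{\Lambda}{M,-}$ defines an honest DG-functor into $\demdg{\Gamma}$ compatible with truncation. Rather than fight truncation functors, I would argue via the standard fact that, for an additive DG-category whose $H^0$ is idempotent complete with additive generator $X$, this DG-category is quasi-equivalent to $\add$ of the endomorphism DG-algebra inside its perfect modules, through the Yoneda DG-functor. This gives both $\mathscr{P}_\Gamma\simeq\demdg{\Lambda}$ and the invariance needed in (2).
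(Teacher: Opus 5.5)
Your proposal is correct and is essentially the paper's argument. The paper just asserts that $\add M=\add M'$ gives a quasi-equivalence $\mathscr{P}_\Gamma\simeq\mathscr{P}_{\Gamma'}$, concludes by Remark~\ref{rmk:morita}, and deduces (2) from (1); your Yoneda-type identification $\mathscr{P}_\Gamma\simeq\demdg{\Lambda}\simeq\mathscr{P}_{\Gamma'}$ is exactly the justification behind that assertion, and once you have it, (2) is immediate as $\mathscr{P}_\Gamma\simeq\demdg{\Lambda}\simeq\demdg{\Lambda'}\simeq\mathscr{P}_{\Gamma'}$, so the detour through $\Phi(M)$ is not needed.
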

\begin{proof}
The second assertion immediately follows from the first one. We show the first assertion. Since $\add M=\add M'$ in $\dem{\Lambda}$, we have a quasi-equivalence between the DG-categories $\mathscr{P}_\Gamma$ and $\mathscr{P}_{\Gamma'}$ (see Remark~\ref{rmk:morita} for details). Thus $\Gamma$ and $\Gamma'$ are Morita equivalent by Remark~\ref{rmk:morita}.
\end{proof}

\subsubsection{The correspondence from (ii) to (i)}

In this part, we prove that for an Auslander $d$-truncated DG-algebra $\Gamma$, the endomorphism DG-algebra of the additive generator of $\MP_\Gamma\cap \MI_\Gamma$ is a $d$-truncated proper connective DG-algebra of finite type. We first prepare several lemmas for general $d$-truncated proper connective DG-algebras.

\begin{definition}
Let $\Gamma$ be a $d$-truncated proper connective DG-algebra and $P\in\MP_\Gamma$. We define the full subcategory $\MK^{[-d,0]}(\add P)$ of $\MK^{[-d,0]}(\MP_\Gamma)$ as follows:
$$\MK^{[-d,0]}(\add P):=\add P*\add P[1]*\cdots *\add P[d]\subset \MK^{[-d,0]}(\MP_\Gamma).$$
\end{definition}

\begin{lemma}\label{lem:equiv_pres}
Let $\Gamma$ be a $d$-truncated proper connective DG-algebra and $P\in\MP_\Gamma$. Put $\Lambda:=\End_\Gamma(P)$. Then the functor $\rhom{\Gamma}{P,-}\colon \pvd\Gamma\to \pvd\Lambda$ induces an equivalence: 
$$\MK^{[-d,0]}(\add P)\to \MK^{[-d,0]}(\MP_\Lambda).$$
\end{lemma}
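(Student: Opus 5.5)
The plan is to use the standard Morita-type argument: $\rhom{\Gamma}{P,-}$ is an exact functor from $\per\Gamma$ to $\der{\Lambda}$, fully faithful on the thick subcategory generated by $P$. One subtlety is that $\Lambda=\End_\Gamma(P)=\tauleq{0}\rend{\Gamma}{P}$ rather than $\rend{\Gamma}{P}$ itself. However, $P\in\MP_\Gamma=\add\Gamma$ and $\Gamma$ is connective, so $H^{i}\rend{\Gamma}{P}\cong\pvd\Gamma(P,P[i])=0$ for $i>0$. Hence the truncation map $\Lambda\to\rend{\Gamma}{P}$ is a quasi-isomorphism, and we may identify $\Lambda$ with $\rend{\Gamma}{P}$. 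Moreover $\Lambda$ is proper and $d$-truncated, since $P\in\dem{\Gamma}$ and $H^i\rend{\Gamma}{P}=0$ for $i\le -d$ by the lemma on vanishing of $H^{\le -d}$ of hom complexes.

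Step 1: the functor sends $P$ to $\Lambda$. It therefore restricts to an equivalence $\add P\to\add\Lambda=\MP_\Lambda$: it is fully faithful on $\add P$ by the identification $H^0\Lambda=\pvd\Gamma(P,P)$, and essentially surjective onto $\add\Lambda$ because idempotents split in $\pvd\Lambda$, which is Krull--Schmidt. More generally, $\rhom{\Gamma}{P,-}$ is fully faithful on $\mathrm{thick}(P)\subset\per\Gamma$. To see this, note that the natural map $\pvd\Gamma(X,Y)\to\der{\Lambda}(\rhom{\Gamma}{P,X},\rhom{\Gamma}{P,Y})$ is an isomorphism for $X=P[i]$, $Y=P[j]$, since both sides compute $H^{j-i}\Lambda$. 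A five-lemma dévissage over triangles in both variables then extends this to all $X,Y\in\mathrm{thick}(P)$. Being exact, the functor sends $\add P[i]$ to $\add\Lambda[i]$ and preserves extensions, so it maps $\MK^{[-d,0]}(\add P)=\add P*\cdots*\add P[d]$ into $\add\Lambda*\cdots*\add\Lambda[d]=\MK^{[-d,0]}(\MP_\Lambda)$, and it is fully faithful there.

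Step 2: essential surjectivity, which is the step needing care. I will argue by induction on the number of factors that every object of $\add\Lambda*\add\Lambda[1]*\cdots*\add\Lambda[n]$ lies in the essential image of $\add P*\cdots*\add P[n]$. An object $Y$ in the target sits in a triangle $Y_1\to Y\to Y_2\to Y_1[1]$ with $Y_1\in\add\Lambda$ and $Y_2\in\add\Lambda[1]*\cdots*\add\Lambda[n]$. By induction, $Y_1\cong F(X_1)$ and $Y_2\cong F(X_2)$ for suitable $X_1,X_2$, where $F=\rhom{\Gamma}{P,-}$. By full faithfulness, the connecting morphism $Y_2\to Y_1[1]$ lifts to a morphism $h\colon X_2\to X_1[1]$. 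Taking $X:=\Cocone(h)$ gives $X\in\add P*(\add P[1]*\cdots*\add P[n])$, and exactness of $F$ yields $F(X)\cong Y$ by uniqueness of the cone up to isomorphism.

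The main obstacle is justifying full faithfulness on the whole of $\mathrm{thick}(P)$, or at least on the relevant extension-closed pieces, with $\Lambda$ taken as the truncated algebra. I will handle this by first replacing $\Lambda$ with the quasi-isomorphic $\rend{\Gamma}{P}$ as above, and then using the standard fact that $\rhom{\Gamma}{P,-}\colon\mathrm{thick}(P)\to\per\rend{\Gamma}{P}$ is a triangle equivalence (derived Morita theory, \cite{kel1994}). With that equivalence in hand, both full faithfulness and essential surjectivity onto the $*$-products are immediate.
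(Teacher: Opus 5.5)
Your proof is correct and follows the same route as the paper. The paper's one-line argument is exactly that $\rhom{\Gamma}{P,-}$ is a triangle functor sending $P$ to $\Lambda$, and hence identifies $\add P*\cdots*\add P[d]$ with $\add\Lambda*\cdots*\add\Lambda[d]$. You make explicit what the paper leaves implicit: that $\Lambda=\tauleq{0}\rend{\Gamma}{P}\to\rend{\Gamma}{P}$ is a quasi-isomorphism because $P\in\add\Gamma$ and $\Gamma$ is connective, and that the functor is fully faithful on $\mathrm{thick}(P)$.

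One small correction. For essential surjectivity onto $\add\Lambda$, the idempotent splitting you need is on the source side, not in $\pvd\Lambda$: lift the idempotent of $\Lambda^n$ to an idempotent of $P^n$ by full faithfulness, and split it in $\pvd\Gamma$, which is Krull--Schmidt.
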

\begin{proof}
This follows from the definition of $\MK^{[-d,0]}(\add P)$ and the fact that $\rhom{\Gamma}{P,-}$ sends $P$ to $\Lambda$.
\end{proof}

\begin{lemma}\label{lem_comm}
For any object $Q^\bullet$ in $\MK^{[-d,0]}(\add P)$, we have an isomorphism:
$$\hom{\Gamma}{P,\tau^{>-d}Q^\bullet}\cong \tau^{>-d} \bigl(\rhom{\Gamma}{P,Q^\bullet}\bigr)$$
\end{lemma}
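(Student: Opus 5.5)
The plan is to compare the two sides through Lemma~\ref{lem:calc}, applied to the truncation triangle for $Q^\bullet$, followed by a cohomology-degree count. Set $X:=\rhom{\Gamma}{P,Q^\bullet}$. By Lemma~\ref{lem:equiv_pres}, $X$ lies in $\MK^{[-d,0]}(\MP_\Lambda)$, so $H^{>0}(X)=0$. In particular $\tau^{>-d}X\in\dem{\Lambda}$, and the right-hand side makes sense.

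First consider the truncation triangle in $\pvd\Gamma$:
\[
\tau^{\leq -d}Q^\bullet\to Q^\bullet\to \tau^{>-d}Q^\bullet\to (\tau^{\leq -d}Q^\bullet)[1].
\]
Applying the exact functor $\rhom{\Gamma}{P,-}$ gives a triangle
\[
\rhom{\Gamma}{P,\tau^{\leq -d}Q^\bullet}\to X\to \rhom{\Gamma}{P,\tau^{>-d}Q^\bullet}\to.
\]
Since $P\in\MP_\Gamma=\add\Gamma$, we have $H^i\bigl(\rhom{\Gamma}{P,Y}\bigr)\cong\pvd\Gamma(P,Y[i])$, and this is a direct summand of $H^i(Y)^{\oplus n}$. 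The first term therefore has cohomology only in degrees $\leq -d$, and the third term has cohomology only in degrees $>-d$. The middle term has cohomology only in degrees $\leq 0$, so the third term has none in degrees $>0$ either.

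Next, take cohomology of this triangle. The long exact sequence shows that $X\to \rhom{\Gamma}{P,\tau^{>-d}Q^\bullet}$ induces isomorphisms on $H^i$ for $i>-d+1$, and on $H^{-d+1}$ it is injective. It is in fact surjective there as well, because the next term $H^{-d+2}$ of the first object vanishes. Hence the map is an isomorphism on $H^{i}$ for all $i\geq -d+1$. The third term has cohomology only in $[-d+1,0]$, so it is already $\tau^{\leq 0}$-truncated; that is, it equals $\hom{\Gamma}{P,\tau^{>-d}Q^\bullet}$. The map therefore factors through $X\to\tau^{>-d}X$, and the induced morphism $\tau^{>-d}X\to\hom{\Gamma}{P,\tau^{>-d}Q^\bullet}$ is a quasi-isomorphism. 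This gives the stated isomorphism, naturally in $Q^\bullet$.

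The only delicate point is the degree-$(-d+1)$ surjectivity, which depends on the cohomological amplitude of $\rhom{\Gamma}{P,\tau^{\leq -d}Q^\bullet}$. I would handle it by reducing to $P=\Gamma$ via additivity, where $\rhom{\Gamma}{\Gamma,Y}\cong Y$. If a more conceptual route is preferred, this is a dual version of Lemma~\ref{lem:calc}, namely the statement that $\rhom{\Gamma}{P,-}$ commutes with $\tau^{>-d}$ because $P$ is projective.
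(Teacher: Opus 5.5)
Your argument is correct and is essentially the paper's proof: apply $\rhom{\Gamma}{P,-}$ to the truncation triangle of $Q^\bullet$ and use that $\rhom{\Gamma}{P,\tau^{\leq -d}Q^\bullet}$ has cohomology only in degrees $\leq -d$ (the paper writes $i>d$, which should read $i>-d$). You also make explicit two steps the paper leaves implicit: the third term already lies in degrees $[-d+1,0]$, so it equals $\hom{\Gamma}{P,\tau^{>-d}Q^\bullet}$, and the map factors through $X\to\tau^{>-d}X$.

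The appeal to Lemma~\ref{lem:calc} and the closing ``delicate point'' paragraph are unnecessary. Vanishing of $H^{>-d}$ of the first term already gives the isomorphism on all $H^{i}$ with $i\geq -d+1$ in one step.
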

\begin{proof}
Consider the following triangle in $\pvd\Gamma$:
$$\rhom{\Gamma}{P,\tau^{\leq -d} Q^\bullet}\to \rhom{\Gamma}{P,Q^\bullet}\to \rhom{\Gamma}{P,\tau^{>-d} Q^\bullet}\dashrightarrow.$$
Since the cohomology of $\tau^{\leq -d}Q^\bullet$ is concentrated in degree at most $-d$, we have that 
$$H^i\bigl(\rhom{\Gamma}{P,\tau^{\leq -d} Q^\bullet}\bigr)=0$$ for any $i>d$. This implies the desired isomorphism.
\end{proof}

\begin{lemma}\label{lem:submorita_modd}
Let $P\in\MP_\Gamma$ and denote $\Lambda:=\End_\Gamma(P)$. Define $\MC_P\subset\dem{\Gamma}$ to be the full subcategory consisting of objects $X$ such that there exists $Q^\bullet\in\MK^{[-d,0]}(\add P)\subset\MK^{[-d,0]}(\MP_\Gamma)$ with $\tau^{>-d} Q^\bullet\cong X$. Then the functor $\hom{\Gamma}{P,-}\colon \dem{\Gamma}\to \dem{\Lambda}$ induces an equivalence $\MC_P\to \dem{\Lambda}$. 
\end{lemma}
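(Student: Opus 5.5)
We compare the two ideal-quotient descriptions of the categories involved, using Proposition~\ref{prop:proj_inj_present} together with Lemma~\ref{lem:equiv_pres} and Lemma~\ref{lem_comm}. Write $G:=\rhom{\Gamma}{P,-}$ and $\Lambda:=\End_\Gamma(P)$, which is again $d$-truncated proper connective since $P\in\MP_\Gamma$. The plan is to build a commutative square
$$
\begin{tikzcd}
\MK^{[-d,0]}(\add P) \ar[r,"G"] \ar[d,"\tau^{>-d}"'] & \MK^{[-d,0]}(\MP_\Lambda) \ar[d,"\tau^{>-d}"]\\
\MC_P \ar[r,"\hom{\Gamma}{P,-}"] & \dem{\Lambda}
\end{tikzcd}
$$
Lemma~\ref{lem_comm} provides this square up to natural isomorphism, and then we read off full faithfulness and essential surjectivity of the bottom functor.

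\emph{Essential surjectivity.} Let $Y\in\dem{\Lambda}$. Proposition~\ref{prop:proj_inj_present} gives a projective presentation $R^\bullet\in\MK^{[-d,0]}(\MP_\Lambda)$ with $\tau^{>-d}R^\bullet\cong Y$. By Lemma~\ref{lem:equiv_pres}, $R^\bullet\cong G(Q^\bullet)$ for some $Q^\bullet\in\MK^{[-d,0]}(\add P)$. Then $X:=\tau^{>-d}Q^\bullet$ lies in $\MC_P$ by definition, and Lemma~\ref{lem_comm} gives $\hom{\Gamma}{P,X}\cong\tau^{>-d}G(Q^\bullet)\cong Y$.

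\emph{Full faithfulness.} By Proposition~\ref{prop:proj_inj_present}, $\tau^{>-d}$ induces an equivalence $\MK^{[-d,0]}(\MP_\Gamma)/\MP_\Gamma[d]\simeq\dem{\Gamma}$. Hence for $Q^\bullet_1,Q^\bullet_2\in\MK^{[-d,0]}(\add P)$,
$$\dem{\Gamma}(\tau^{>-d}Q_1^\bullet,\tau^{>-d}Q_2^\bullet)\cong\pvd\Gamma(Q_1^\bullet,Q_2^\bullet)/[\MP_\Gamma[d]],$$
and the analogous formula holds on the $\Lambda$ side with $\MP_\Lambda[d]$. The top functor $G$ is fully faithful by Lemma~\ref{lem:equiv_pres}. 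So it suffices to show that $G$ identifies the two ideals: a morphism $Q_1^\bullet\to Q_2^\bullet$ factors through $\MP_\Gamma[d]$ if and only if its image factors through $\MP_\Lambda[d]=\add\Lambda[d]=G(\add P[d])$.

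The direction ``$\Leftarrow$'' is immediate, because $G$ restricted to $\MK^{[-d,0]}(\add P)\supset\add P[d]$ is fully faithful. For ``$\Rightarrow$'', I would show that a morphism $Q_1^\bullet\to Q_2^\bullet$ factors through $\MP_\Gamma[d]$ exactly when it factors through $\tau^{\le -d}Q_2^\bullet\to Q_2^\bullet$. Here $\tau^{\le -d}Q_2^\bullet\cong H^{-d}(Q_2^\bullet)[d]$ when the cohomology of $Q_2^\bullet$ sits in degrees $\ge -2d+1$, so a priori this truncation need not lie in $\add P[d]$. To get around this, I would instead use the filtration $Q_1^\bullet\in\add P*\cdots*\add P[d]$. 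A map into $\tau^{\le -d}Q_2^\bullet$ must factor through the last piece $Q_1^\bullet\to P_d[d]$, with $P_d\in\add P$, because $\pvd\Gamma(\add P*\cdots*\add P[d-1],\ \tau^{\le -d}Q_2^\bullet)=0$ by degree reasons.

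Thus the ideal generated by $\MP_\Gamma[d]$, restricted to $\MK^{[-d,0]}(\add P)$, coincides with the ideal generated by $\add P[d]$. The same argument applies on the $\Lambda$ side, and $G$ maps $\add P[d]$ onto $\add\Lambda[d]$. Together with Lemma~\ref{lem_comm}, this shows that the bottom functor is fully faithful on $\MC_P$.

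I expect this identification of the ideals to be the main obstacle, since it requires checking the vanishing $\pvd\Gamma(P[j],\tau^{\le -d}Q^\bullet)=0$ for $j<d$ carefully.
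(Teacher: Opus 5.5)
Your proof is correct. Essential surjectivity is exactly the paper's argument, but for full faithfulness you take a different route.

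\textbf{The paper's route.} The paper never uses the ideal-quotient description of Proposition~\ref{prop:proj_inj_present}. Take $L\cong\tau^{>-d}Q^\bullet$ with $Q^\bullet\in\MK^{[-d,0]}(\add P)$, and an \emph{arbitrary} $N\in\dem{\Gamma}$. The paper uses the dual of Lemma~\ref{lem:calc}, i.e.\ that $\tau^{>-d}$ is left adjoint to the inclusion of $(\pvd\Gamma)^{\geq -d+1}\ni N$. This replaces $\hom{\Gamma}{L,N}$ by $\tauleq{0}\rhom{\Gamma}{Q^\bullet,N}$, and similarly on the $\Lambda$-side. It then shows that $\tauleq{0}\rhom{\Gamma}{Q^\bullet,N}\to\tauleq{0}\rhom{\Lambda}{\rhom{\Gamma}{P,Q^\bullet},\rhom{\Gamma}{P,N}}$ is an isomorphism by d\'evissage down to $Q^\bullet=P[i]$, where it is just the Yoneda isomorphism. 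In degree $0$ this reads $\dem{\Gamma}(L,N)\cong\pvd\Gamma(Q^\bullet,N)\cong\pvd\Lambda(GQ^\bullet,GN)\cong\dem{\Lambda}(GL,GN)$ with your $G=\rhom{\Gamma}{P,-}$. Because the truncation sits only on the source, no ideal appears, so the step you flagged as the main obstacle is bypassed entirely.

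\textbf{What each route buys.} The paper's route gives a statement about whole hom complexes, with arbitrary target $N$. This DG-level version is what the paper invokes later, for the quasi-isomorphism $\End_\Lambda(\hom{\Gamma}{P,\Gamma})\cong\Gamma$ in Corollary~\ref{cor:2to1} and the quasi-equivalence $\mathscr{P}_\Gamma\to\demdg{\Lambda}$ in Corollary~\ref{cor:main}.

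Your route stays in the homotopy categories. Its key input is that any map from $Q_1^\bullet\in\MK^{[-d,0]}(\add P)$ into $(\pvd\Gamma)^{\le -d}$ factors through the top term $Q_1^\bullet\to P_d[d]$. This holds because $\pvd\Gamma(P[j],Y)=0$ for $0\le j\le d-1$ and $Y\in(\pvd\Gamma)^{\le -d}$, as $P\in\add\Gamma$. It correctly shows that $[\MP_\Gamma[d]]$ and $[\add P[d]]$ coincide on $\MK^{[-d,0]}(\add P)$, and that $G$ carries this ideal onto $[\MP_\Lambda[d]]$. That is enough for the equivalence as stated. However, it only gives information in cohomological degree $0$ and for targets in $\MC_P$. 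Upgrading it to the DG-level statement would mean redoing the ideal analysis in every negative degree.

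\textbf{A small slip.} The isomorphism $\tau^{\le -d}Q_2^\bullet\cong H^{-d}(Q_2^\bullet)[d]$ holds only for $d=1$; in general this truncation has cohomology in degrees $[-2d+1,-d]$. Your argument never uses it, so nothing breaks.
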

\begin{proof}
First, we prove essential surjectivity. Take any object $N\in\dem{\Lambda}$. Then there exists $Q^\bullet\in \MK^{[-d,0]}(\MP_\Lambda)$ such that $\tau^{>-d} Q^\bullet\cong N$ by Proposition~\ref{prop:proj_inj_present}. Then we also have a morphism $Q'^\bullet\in \MK^{[-d,0]}(\add P)$ such that $\rhom{\Gamma}{P,Q'^\bullet}\cong Q^\bullet$ by Lemma~\ref{lem:equiv_pres}. Thus, $\tau^{>-d} Q'^\bullet$ lies in $\MC_P$ and we have the following isomorphisms:
$$\hom{\Gamma}{P,\tau^{>-d} Q'^\bullet}\cong \tau^{>-d} \rhom{\Gamma}{P,Q'^\bullet}\cong \tau^{>-d} Q^\bullet\cong N$$
by Lemma~\ref{lem_comm}. Hence we have established essential surjectivity.

Next, we show that the functor $\hom{\Gamma}{P,-}\colon \MC_P\to \dem{\Lambda}$ is fully faithful. Take any object $L\in\MC_P$ and $Q^\bullet\in\MK^{[-d,0]}(\add P)$ with an isomorphism $\alpha\colon \tau^{>-d} Q^\bullet\to L$. Let $N$ be any object in $\dem{\Gamma}$. Then we have the following commutative diagram in $\pvd k$:
$$
\begin{tikzcd}
\hom{\Gamma}{L,N} & \hom{\Lambda}{\hom{\Gamma}{P,L}, \hom{\Gamma}{P,N}} \\
\hom{\Gamma}{\tau^{>-d} Q^\bullet, N} & \hom{\Lambda}{\hom{\Gamma}{P,\tau^{>-d} Q^\bullet}, \hom{\Gamma}{P,N}} \\
\tauleq{0}\bigl(\rhom{\Gamma}{Q^\bullet, N}\bigr) & \tauleq{0}\bigl(\rhom{\Lambda}{\rhom{\Gamma}{P,Q^\bullet}, \rhom{\Gamma}{P,N}}\bigr)
\Ar{1-1}{1-2}{}
\Ar{2-1}{2-2}{}
\Ar{1-1}{2-1}{"\alpha^*"}
\Ar{1-2}{2-2}{"\hom{\Gamma}{P,\alpha}^*" }
\Ar{3-1}{3-2}{}
\Ar{2-1}{3-1}{}
\Ar{2-2}{3-2}{}
\end{tikzcd}
$$
where the vertical maps are induced by $\alpha$ and the truncation morphism $Q^\bullet\to\tau^{-d}Q^\bullet$. The horizontal maps are induced by the functor $\hom{\Gamma}{P,-}$ and $\rhom{\Gamma}{P,-}$. The vertical maps are isomorphisms because $\alpha$ is an isomorphism and by the dual statement of Lemma~\ref{lem:calc}. Thus, it suffices to show that the bottom horizontal map is an isomorphism. 

It suffices to show the case when $Q^\bullet=P[i]$ for some $-d\leq i\leq 0$ since the category $\MK^{[-d,0]}(\add P)$ is obtained by extensions from $\add P[i]$. For this case, we have the isomorphisms:
\begin{align*}
\tauleq{0}\bigl(\rhom{\Gamma}{P[i], N}\bigr) &\cong \tauleq{0}\bigl(\rhom{\Lambda}{\Lambda[i], \rhom{\Gamma}{P,N}}\bigr) \\
&\cong \tauleq{0}\bigl(\rhom{\Lambda}{\hom{\Gamma}{P,P}[i], \rhom{\Gamma}{P,N}}\bigr)
\end{align*}
One can easily check that the composite morphism of these isomorphisms coincides with the bottom horizontal map in the above diagram. Therefore, we have shown that the functor $\hom{\Gamma}{P,-}\colon \MC_P\to \dem{\Lambda}$ is fully faithful. This completes the proof.
\end{proof}

\begin{proposition}\label{prop:submorita_aus}
Let $\Gamma$ be an Auslander $d$-truncated DG-algebra and $P\in\MP_\Gamma\cap \MI_\Gamma$ be an additive generator of $\MP_\Gamma\cap \MI_\Gamma$. Put $\Lambda:=\End_\Gamma(P)$. Then the functor $\hom{\Gamma}{P,-}\colon \dem{\Gamma}\to \dem{\Lambda}$ induces an equivalence $\MP_\Gamma\to \dem{\Lambda}$. 
\end{proposition}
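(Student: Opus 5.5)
The plan is to reduce the statement to Lemma~\ref{lem:submorita_modd}. That lemma already shows that $\hom{\Gamma}{P,-}$ restricts to an equivalence $\MC_P\to\dem{\Lambda}$, where $\MC_P$ consists of the objects $\tau^{>-d}Q^\bullet$ with $Q^\bullet\in\MK^{[-d,0]}(\add P)$. So it suffices to prove that $\MC_P=\MP_\Gamma$ as full subcategories of $\dem{\Gamma}$, up to isomorphism. I would prove the two inclusions separately: the inclusion $\MC_P\subset\MP_\Gamma$ uses the global dimension bound, and the inclusion $\MP_\Gamma\subset\MC_P$ uses the (co)dominant dimension condition.

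\emph{Step 1: $\MC_P\subset\MP_\Gamma$.} Let $X=\tau^{>-d}Q^\bullet$ with $Q^\bullet\in\MK^{[-d,0]}(\add P)$. Since $\gldim(\dem{\Gamma})\le d+1$, we have $\pd X\le d+1$, so it is enough to show $\BE(X,N)\cong\pvd\Gamma(X,N[1])=0$ for every $N\in\dem{\Gamma}$. I would use the truncation triangle $\tau^{\le -d}Q^\bullet\to Q^\bullet\to X\dashrightarrow$. By Lemma~\ref{lem:proj_pres}(1), $\pvd\Gamma(Q^\bullet,N[\ge d+1])=0$. The remaining extension groups $\pvd\Gamma(Q^\bullet,N[j])$ for $1\le j\le d$ I would kill by devissage on $N$. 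For this I would take an injective copresentation of $N$ and use that the terms of $Q^\bullet$ are projective-injective; the point is that $\pvd\Gamma(P[i],I[j])=0$ for $i\ge 0$, $j\ge 1$ and $I\in\MI_\Gamma$, because $P$ is projective and $I$ lies in $\dem{\Gamma}$. Combining this with the long exact sequence, and with the argument of Proposition~\ref{prop:inj_dim_d+1}(1) for the top degree, should give $\BE(X,-)=0$. Hence $X$ is projective.

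\emph{Step 2: $\MP_\Gamma\subset\MC_P$.} Since $\MC_P$ is closed under finite direct sums and summands (the latter by Krull--Schmidt together with the equivalence of Lemma~\ref{lem:submorita_modd}), it suffices to show $\Gamma\in\MC_P$. For this I would use condition (iii') of Definition~\ref{dfn:auslander} together with the $k$-duality $D_d$. Dualizing the dominant-dimension datum, one gets $P^\bullet\in\MK^{[-d,0]}(\MP_\Gamma\cap\MI_\Gamma)=\MK^{[-d,0]}(\add P)$ with $\tau^{>-d}P^\bullet\cong D_d\Gamma$; this shows directly that injectives lie in $\MC_P$. To get $\Gamma$ itself, I would first try the following. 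Take the projective-injective copresentation $I^\bullet\in\MK^{[0,d]}(\add P)$ of $\Gamma$ from (iii). Then build a left $\add P$-approximation resolution of $\Gamma$: its first term is $I_0$, and at each subsequent stage I would use Proposition~\ref{prop:inductive_cocone} to splice in projective-injective presentations of the cosyzygies $M_i$. Here each $M_i$ is realized as a cocone of proj-inj objects, and the last one is injective by Proposition~\ref{prop:inj_dim_d+1}(2).

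I expect Step 2 to be the main obstacle. The difficulty is converting an $\add P$-\emph{co}presentation of $\Gamma$ into an $\add P$-\emph{presentation}, whose $\tau^{>-d}$ recovers $\Gamma$. If the splicing argument above becomes unwieldy, my fallback is to bypass $\MC_P$ for projectives. I would prove full faithfulness of $\hom{\Gamma}{P,-}$ on $\MP_\Gamma$ directly by a double-centralizer argument, applying $\hom{\Gamma}{-,N}$ to $\Gamma\cong\tau^{\le 0}I^\bullet$ and using Lemma~\ref{lem:calc}. I would then obtain essential surjectivity from Lemma~\ref{lem:submorita_modd} together with Step 1.
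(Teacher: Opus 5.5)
Your reduction to Lemma~\ref{lem:submorita_modd} through an equality of subcategories is the paper's strategy. However, the equality $\MC_P=\MP_\Gamma$ that you set out to prove is false, and so is the statement as printed: $\MP_\Gamma$ should be $\MI_\Gamma$.

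\textbf{A counterexample.} Take $d=1$ and let $\Gamma$ be the path algebra of $1\to 2\to 3$ modulo the composite of the two arrows. This is the Auslander algebra of $kA_2$.
\begin{itemize}
\item Its indecomposable projective-injectives are $P_1\cong I_2$ and $P_2\cong I_3$. Hence $P=P_1\oplus P_2$ and $\Lambda=\End_\Gamma(P)\cong kA_2$.
\item $\hom{\Gamma}{P,P_3}=\mathrm{Hom}_\Gamma(P_1\oplus P_2,S_3)=0$, although $P_3\neq 0$. So $\hom{\Gamma}{P,-}$ is not even faithful on $\MP_\Gamma$, and $P_3\notin\MC_P$. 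This rules out your Step~2 and your fallback.
\item $I_1=S_1\cong\Cok(P_2\to P_1)$ lies in $\MC_P$, but it is not projective, since $\BE(S_1,S_2)\neq 0$. This rules out Step~1.
\end{itemize}

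You already had the contradiction in hand. Your Step~2 correctly derives $D_d\Gamma\in\MC_P$ from condition (iii') of Definition~\ref{dfn:auslander}. Combined with Step~1, this would force $\MI_\Gamma\subset\MP_\Gamma$.

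The concrete error in Step~1 is the vanishing you invoke. We have $\pvd\Gamma(P[i],I[j])\cong\pvd\Gamma(P,I[j-i])$, and this is in general nonzero when $-d<j-i\leq 0$. For instance, with $Q^\bullet=\Cone(P_2\to P_1)$ as above,
\[
\pvd\Gamma(Q^\bullet,S_2[1])\cong\Cok\bigl(\mathrm{Hom}_\Gamma(P_1,S_2)\to\mathrm{Hom}_\Gamma(P_2,S_2)\bigr)\neq 0 .
\]
Conceptually, take $d=1$, $\Gamma=\End_\Lambda(M)$ and $P=\mathrm{Hom}_\Lambda(M,D\Lambda)$. Then $\hom{\Gamma}{P,-}$ restricted to projectives is the inverse Nakayama functor $\mathrm{Hom}_\Lambda(D\Lambda,-)$, which is not an equivalence.

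\textbf{What is true.} The paper's argument, despite its wording, proves $\MC_P=\MI_\Gamma$. This gives an equivalence $\MI_\Gamma\to\dem{\Lambda}$. Accordingly, the additive generator in Corollary~\ref{cor:2to1} should be $\hom{\Gamma}{P,D_d\Gamma}$.

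The inclusion $\MI_\Gamma\subset\MC_P$ is exactly your observation from (iii'), together with closure of $\MC_P$ under summands. The paper's ``$\MP_\Gamma\subset\MC_P$ by definition'' should read $\MI_\Gamma\subset\MC_P$.

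For $\MC_P\subset\MI_\Gamma$, do not try to kill $\BE(X,-)$. Argue as follows.
\begin{itemize}
\item Given $X\cong\tau^{>-d}Q^\bullet$ with $Q^\bullet\in\MK^{[-d,0]}(\add P)$, put $Y:=(\tau^{\leq -d}Q^\bullet)[-d]\cong\tau^{\leq 0}(Q^\bullet[-d])$. This lies in $\dem{\Gamma}$.
\item Since $P$ is injective, $Q^\bullet[-d]\in\MK^{[0,d]}(\MI_\Gamma)$ is an injective copresentation of $Y$.
\item $\id Y\leq d+1$, because $\gldim(\dem{\Gamma})\leq d+1$.
\item Proposition~\ref{prop:inj_dim_d+1}(2) then shows that $(\tau^{>0}(Q^\bullet[-d]))[d]\cong X$ is injective.
\end{itemize}

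The paper first shows that $Y$ is projective using part (1), which is unnecessary. It then concludes that $X$ is ``projective-injective'', which does not follow; only injectivity does.
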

\begin{proof}
By Lemma~\ref{lem:submorita_modd}, it is enough to show that $\MP_\Gamma=\MC_P$ where $\MC_P$ is as in Lemma~\ref{lem:submorita_modd}. By definition of Auslander $d$-truncated DG-algebras, $\MP_\Gamma\subset\MC_P$ holds. 

Let $X$ be any object in $\MC_P$. Then there exists $Q^\bullet\in\RK^{[-d,0]}(\add P)$ such that $\tau^{>-d} Q^\bullet\cong X$. Since $X$ has projective dimension at most $d+1$, we have $(\tau^{\leq -d}Q^\bullet)[-d]\in\dem{\Gamma}$ is projective by Proposition~\ref{prop:inj_dim_d+1}. We put $Y:=(\tau^{\leq -d}Q^\bullet)[-d]\cong \tauleq{0}(Q^\bullet[-d])$. Then $Y$ also has injective dimension at most $d+1$ and $Q^\bullet[-d]\in \RK^{[0,d]}(\MP_\Gamma\cap \MI_\Gamma)$ is an injective copresentation of $Y$. Thus by Proposition~\ref{prop:inj_dim_d+1} again, we have $\tau^{>0}\bigl(Q^\bullet[-d]\bigr)[d]\cong \tau^{>-d} Q^\bullet\cong X\in\dem{\Gamma}$ is injective. Therefore $X$ is a projective-injective object and we have $\MC_P\subset \MP_\Gamma$. 
\end{proof}

As a consequence of the proposition above, we obtain the following corollary.

\begin{cor}\label{cor:2to1}
Let $\Gamma$ be an Auslander $d$-truncated DG-algebra and $P\in \MP_\Gamma\cap \MI_\Gamma$ be an additive generator of $\MP_\Gamma\cap \MI_\Gamma$. Put $\Lambda:=\End_\Gamma(P)$. Then $\Lambda$ is a $d$-truncated proper connective DG-algebra of finite representation type with additive generator $\hom{\Gamma}{P,\Gamma}\in \dem{\Lambda}$ of $\dem{\Lambda}$. Moreover, we have a quasi-isomorphism $\End_\Lambda\bigl(\hom{\Gamma}{P,\Gamma}\bigr)\cong \Gamma$.
\end{cor}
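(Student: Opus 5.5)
The proof will rest almost entirely on Proposition~\ref{prop:submorita_aus}. First, $\Lambda=\End_\Gamma(P)=\tauleq{0}\rend{\Gamma}{P}$ is connective by construction. It is $d$-truncated because $P\in\dem{\Gamma}$, so by the vanishing lemma for $H^{\leq -d}\rhom{\Gamma}{P,P}$ we have $H^{i}(\Lambda)=0$ for $i\leq -d$. It is proper because $\dem{\Gamma}$ is hom-finite, and each cohomology $H^{-i}(\Lambda)\cong\pvd\Gamma(P,P[-i])$ is finite-dimensional with only finitely many nonzero degrees. Hence $\Lambda$ is a $d$-truncated proper connective DG-algebra, and $\dem{\Lambda}$ makes sense.

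Next I will show that $\Lambda$ is representation-finite. By Proposition~\ref{prop:submorita_aus}, $\hom{\Gamma}{P,-}$ restricts to an equivalence $\MP_\Gamma\to\dem{\Lambda}$. Since $\MP_\Gamma=\add\Gamma$ and additive functors preserve $\add$, we get
\[
\dem{\Lambda}=\add\hom{\Gamma}{P,\Gamma},
\]
so $\hom{\Gamma}{P,\Gamma}$ is an additive generator.

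It remains to identify $\End_\Lambda\bigl(\hom{\Gamma}{P,\Gamma}\bigr)$ with $\Gamma$. I will use the morphism of DG-algebras induced by the DG-functor $\rhom{\Gamma}{P,-}$:
\[
\rend{\Gamma}{\Gamma}\to\rend{\Lambda}{\rhom{\Gamma}{P,\Gamma}}.
\]
Applying $\tauleq{0}$ to this map and using $\Gamma\cong\tauleq{0}\rend{\Gamma}{\Gamma}$ (since $\Gamma$ is connective) produces a comparison map from $\Gamma$. Write $\Gamma=\tau^{>-d}Q^\bullet$ with $Q^\bullet\in\MK^{[-d,0]}(\add P)$, which is possible because $\Gamma\in\MP_\Gamma=\MC_P$. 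The fully-faithfulness argument in the proof of Lemma~\ref{lem:submorita_modd} then shows that
\[
\hom{\Gamma}{L,N}\to\hom{\Lambda}{\hom{\Gamma}{P,L},\hom{\Gamma}{P,N}}
\]
is a quasi-isomorphism of complexes for $L\in\MC_P$, not only on $H^0$. I will also use that $\tauleq{0}\rhom{\Gamma}{P,\Gamma}=\hom{\Gamma}{P,\Gamma}$, together with the dual of Lemma~\ref{lem:calc}, to replace the target by $\tauleq{0}\rend{\Lambda}{\hom{\Gamma}{P,\Gamma}}=\End_\Lambda\bigl(\hom{\Gamma}{P,\Gamma}\bigr)$. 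Together these give the quasi-isomorphism $\Gamma\simeq\End_\Lambda\bigl(\hom{\Gamma}{P,\Gamma}\bigr)$ compatibly with multiplication.

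I expect the main obstacle to be this last point: checking that the complex-level quasi-isomorphism obtained in Lemma~\ref{lem:submorita_modd} is multiplicative. That is, it must be induced by an actual DG-algebra map, or at least a zigzag of DG-algebra quasi-isomorphisms, rather than being merely an isomorphism in $\der{k}$. My first attempt will be to work with a cofibrant (K-projective) model of $P$, so that $\rhom{\Gamma}{P,-}$ is a genuine DG-functor. Then $\tauleq{0}$ of a DG-algebra map is again a DG-algebra map, and the quasi-isomorphism property can be checked degreewise on cohomology by Lemma~\ref{lem:submorita_modd}.
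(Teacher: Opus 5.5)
Your proposal is correct and follows the paper's route. The paper gives no separate proof, only that the corollary is an immediate consequence of Proposition~\ref{prop:submorita_aus}: the additive generator comes from the equivalence $\add\Gamma=\MP_\Gamma\simeq\dem{\Lambda}$, and the quasi-isomorphism $\End_\Lambda\bigl(\hom{\Gamma}{P,\Gamma}\bigr)\simeq\Gamma$ comes from the complex-level fully faithfulness in the proof of Lemma~\ref{lem:submorita_modd}, applied to $L=N=\Gamma\in\MC_P$.

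Your concern about multiplicativity goes beyond what the paper writes; it leaves this implicit, as in the quasi-equivalence $\mathscr{P}_\Gamma\to\demdg{\Lambda}$ used in Corollary~\ref{cor:main}. Note that a K-projective model $\mathbf{p}P$ makes only the source side strict. The module $\mathrm{Hom}_\Gamma(\mathbf{p}P,\Gamma)$ need not be K-projective over $\Lambda$, so you still need a standard zigzag or bimodule replacement to reach the derived endomorphism algebra over $\Lambda$.
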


We recall Theorem~\ref{thm:main_first} and provide its proof.

\begin{theorem}[$=$ Theorem~\ref{thm:main_first}]\label{thm:main}
There exists a bijection between the following two classes:
\begin{itemize}
\item[(1)] The Morita equivalence classes of $d$-truncated proper connective DG-algebras $\Lambda$ of finite representation type.
\item[(2)] The Morita equivalence classes of Auslander $d$-truncated proper connective DG-algebras $\Gamma$.
\end{itemize}
The correspondence from (1) to (2) is given by $\Lambda\mapsto \End_\Lambda(M)$ where $M$ is an additive generator of $\dem{\Lambda}$. 
\end{theorem}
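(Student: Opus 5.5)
The plan is to assemble the bijection from the two directions already established, Proposition~\ref{prop:1to2} and Corollary~\ref{cor:2to1}, and to check that both assignments respect Morita equivalence and are mutually inverse on classes.

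\emph{Map from (1) to (2).} Define $\Phi([\Lambda]) := [\End_\Lambda(M)]$ for an additive generator $M$ of $\dem{\Lambda}$.
By Proposition~\ref{prop:1to2}(2) the algebra $\End_\Lambda(M)$ is Auslander. Lemma~\ref{lem:well-def} shows that its class depends neither on the choice of $M$ nor on the representative $\Lambda$ of its Morita class.

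\emph{Map from (2) to (1).} Define $\Psi([\Gamma]) := [\End_\Gamma(P)]$, where $P$ is an additive generator of $\MP_\Gamma\cap\MI_\Gamma$.
By Corollary~\ref{cor:2to1} the algebra $\End_\Gamma(P)$ is of finite representation type.

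For well-definedness I would use Remark~\ref{rmk:morita}. A Morita equivalence between $\Gamma$ and $\Gamma'$ gives a quasi-equivalence $\demdg{\Gamma}\simeq\demdg{\Gamma'}$. Its $H^0$ is an equivalence $\dem{\Gamma}\simeq\dem{\Gamma'}$ that preserves the extriangulated structure, since conflations are detected via cones in the pretriangulated hull. Hence it preserves projective-injective objects, and it restricts to a quasi-equivalence between the DG-subcategories on $\add P$ and $\add P'$. Two additive generators of the same additive category have quasi-equivalent full DG-subcategories of their additive closures. Through Lemma~\ref{lem:submorita_modd} and Proposition~\ref{prop:submorita_aus} these subcategories identify with the DG-categories of projectives $\mathscr{P}_{\End_\Gamma(P)}$ and $\mathscr{P}_{\End_{\Gamma'}(P')}$. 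Remark~\ref{rmk:morita} then gives Morita equivalence.

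\emph{Mutually inverse.}
\begin{itemize}
\item $\Psi\Phi=\mathrm{id}$: by Proposition~\ref{prop:1to2}(1), $F_M(\Lambda)$ additively generates $\MP_\Gamma\cap\MI_\Gamma$ and $\End_\Gamma(F_M(\Lambda))\cong\Lambda$. Hence $\Psi\Phi[\Lambda]=[\Lambda]$, using independence of the choice of generator.
\item $\Phi\Psi=\mathrm{id}$: by Corollary~\ref{cor:2to1}, $\hom{\Gamma}{P,\Gamma}$ is an additive generator of $\dem{\Lambda}$ with $\End_\Lambda(\hom{\Gamma}{P,\Gamma})\simeq\Gamma$. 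Hence $\Phi\Psi[\Gamma]=[\Gamma]$.
\end{itemize}

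The main obstacle is upgrading the triangulated or additive equivalences to DG quasi-equivalences, which the Morita notion requires. In particular, the identification $\End_\Gamma(F_M(\Lambda))\cong\Lambda$ must be a quasi-isomorphism of DG-algebras, not merely an isomorphism of $H^0$. I would handle this by working throughout with the DG functors $\tau^{\le0}\rhom{\Lambda}{M,-}$ and $\tau^{\le0}\rhom{\Gamma}{P,-}$ between the enhancements $\demdg{-}$. Being fully faithful on $H^*$ of the relevant hom complexes (Lemma~\ref{lem:submorita_modd}, Proposition~\ref{prop:fully_faithful}), they are quasi-fully faithful, so the induced maps on endomorphism DG-algebras are quasi-isomorphisms.
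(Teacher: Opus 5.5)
Your proposal is correct and follows the paper's proof. Well-definedness of $\Lambda\mapsto\End_\Lambda(M)$ comes from Lemma~\ref{lem:well-def}, injectivity ($\Psi\Phi=\mathrm{id}$) from Proposition~\ref{prop:1to2}, and surjectivity ($\Phi\Psi=\mathrm{id}$) from Corollary~\ref{cor:2to1}. Your explicit check that $\Psi$ respects Morita classes spells out a step the paper's injectivity claim uses only implicitly.

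One correction, inherited from the paper's wording of Proposition~\ref{prop:1to2}(1): the additive generator of $\MP_\Gamma\cap\MI_\Gamma$ is $F_M(D_d\Lambda)$, not $F_M(\Lambda)$. Already for $d=1$ and $\Lambda=k(1\to 2)$, the image of the simple projective is not injective over the Auslander algebra. The step $\Psi\Phi=\mathrm{id}$ should therefore use $\End_\Gamma(F_M(D_d\Lambda))\simeq\End_\Lambda(D_d\Lambda)\simeq\Lambda$.
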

\begin{proof}
The well-definedness and injectivity of the correspondence from (1) to (2) follow from Lemma~\ref{lem:well-def} and Propositions~\ref{prop:1to2}. On the other hand, the surjectivity follows from Corollary~\ref{cor:2to1}. This proves the theorem.
\end{proof}

As a corollary, we can characterize a connective DG-category $\mathscr{A}$ which is quasi-equivalent to a DG-enhancement of a $d$-extended module category of a $d$-truncated proper connective DG-algebra of finite representation type as follows:

\begin{cor}\label{cor:main}
Let $\mathscr{A}$ be a connective DG-category with idempotent complete homotopy category $H^0(\mathscr{A})$. Then the following are equivalent:
\begin{itemize}
\item[(1)] $\mathscr{A}$ is quasi-equivalent to a DG-enhancement of a $d$-extended module category $\mathscr{H}^d_\Lambda$ for some $d$-truncated proper connective DG-algebra $\Lambda$ of finite representation type.
\item[(2)] The homotopy category $H^0(\mathscr{A})$ has an additive generator $M$ such that its endomorphism DG-algebra $\End_{\mathscr{A}}(M):=\mathscr{A}(M,M)$ is an Auslander $d$-truncated proper connective DG-algebra. 
\end{itemize}
\end{cor}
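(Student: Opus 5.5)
The plan is to deduce Corollary~\ref{cor:main} from Theorem~\ref{thm:main}, using Proposition~\ref{prop:1to2}, Corollary~\ref{cor:2to1} and Remark~\ref{rmk:morita}. Both implications reduce to one observation: a connective DG-category whose homotopy category has an additive generator $M$ is determined up to quasi-equivalence by the DG-algebra $\mathscr{A}(M,M)$. More precisely, if $H^0(\mathscr{A})=\add M$ and $H^0(\mathscr{A})$ is idempotent complete, then $\mathscr{A}$ is quasi-equivalent to the full DG-subcategory of $\mathscr{D}(\End_{\mathscr{A}}(M))$ on $\add$ of the free module. This DG-subcategory is exactly $\mathscr{P}_{\End_{\mathscr{A}}(M)}$ of Remark~\ref{rmk:morita}. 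The quasi-equivalence is the Yoneda functor $X\mapsto \mathscr{A}(M,X)$. It is quasi-fully faithful because it is so on $M$, and morphism complexes are additive in both variables up to homotopy. Its image is $\add$ of the free module because $X$ is a summand of $M^n$ in $H^0(\mathscr{A})$ and idempotents split on both sides.

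\textbf{(1)$\Rightarrow$(2).} Suppose $\mathscr{A}$ is quasi-equivalent to $\demdg{\Lambda}$ with $\Lambda$ representation-finite. Let $M$ be an additive generator of $\dem{\Lambda}$, and take the corresponding object of $\mathscr{A}$; it is an additive generator of $H^0(\mathscr{A})\simeq\dem{\Lambda}$. A quasi-equivalence induces quasi-isomorphisms of morphism complexes. Hence $\mathscr{A}(M,M)$ is quasi-isomorphic to $\demdg{\Lambda}(M,M)$. Since $\demdg{\Lambda}$ is the full subcategory of $\tau^{\le 0}\mathscr{D}(\Lambda)$, the latter complex is $\tau^{\le 0}\rend{\Lambda}{M}=\End_\Lambda(M)=\Gamma$. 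By Proposition~\ref{prop:1to2}, $\Gamma$ is Auslander $d$-truncated, and it is proper because $\dem{\Lambda}$ is hom-finite with bounded hom complexes. Being Auslander is invariant under quasi-isomorphism, which induces Morita equivalence, so (2) holds.

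\textbf{(2)$\Rightarrow$(1).} Let $\Gamma:=\mathscr{A}(M,M)$ be Auslander, and let $P$ be an additive generator of $\MP_\Gamma\cap\MI_\Gamma$. Put $\Lambda:=\End_\Gamma(P)$. Corollary~\ref{cor:2to1} shows that $\Lambda$ is a $d$-truncated proper connective DG-algebra of finite representation type. It also gives an additive generator $N:=\hom{\Gamma}{P,\Gamma}$ of $\dem{\Lambda}$ with $\End_\Lambda(N)\simeq\Gamma$. Applying the observation twice, to $\mathscr{A}$ with generator $M$ and to $\demdg{\Lambda}$ with generator $N$, identifies both with $\mathscr{P}_\Gamma$ up to quasi-equivalence. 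For $\demdg{\Lambda}$ the observation applies because $\dem{\Lambda}$ is Krull--Schmidt and hence idempotent complete. Therefore $\mathscr{A}\simeq\demdg{\Lambda}$, which is (1).

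The main obstacle is making the observation rigorous at the DG level. We need the Yoneda functor from $\mathscr{A}$ to DG-$\Gamma$-modules to land, up to quasi-equivalence, in the full DG-subcategory on summands of $\Gamma$, and to be quasi-fully faithful there. I would first prove this on the one-object subcategory $\{M\}$, where it is tautological. Then I would extend to finite sums and to summands, using that quasi-equivalence classes of DG-categories are unchanged by passing to the closure under finite sums and summands inside $H^0$. This is the same fact that underlies Remark~\ref{rmk:morita}.
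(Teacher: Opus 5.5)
Your proposal is correct and follows essentially the paper's route: (1)$\Rightarrow$(2) is read off from Proposition~\ref{prop:1to2}, and (2)$\Rightarrow$(1) goes through $\mathscr{A}\simeq\mathscr{P}_\Gamma\simeq\demdg{\Lambda}$, where the first step is exactly the content of Remark~\ref{rmk:morita} that your Yoneda observation spells out. The only variation is in the second step: the paper gets $\mathscr{P}_\Gamma\simeq\demdg{\Lambda}$ directly from the DG-functor $\hom{\Gamma}{P,-}$ (proof of Lemma~\ref{lem:submorita_modd} with Proposition~\ref{prop:submorita_aus}), whereas you apply your observation a second time to $\demdg{\Lambda}$ with generator $\hom{\Gamma}{P,\Gamma}$ and use the quasi-isomorphism $\End_\Lambda\bigl(\hom{\Gamma}{P,\Gamma}\bigr)\simeq\Gamma$ of Corollary~\ref{cor:2to1}, which rests on the same input.
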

\begin{proof}
The implication (1) $\Rightarrow$ (2) follows from Theorem~\ref{thm:main}. For (2) $\Rightarrow$ (1), since $H^0(\mathscr{A})$ is idempotent complete, there exists a quasi-equivalence $\mathscr{A}\to \mathscr{P}_\Gamma$ where $\Gamma:=\End_{\mathscr{A}}(M)$ by Remark~\ref{rmk:morita}. On the other hand, Theorem~\ref{thm:main} gives a $d$-truncated proper connective DG-algebra $\Lambda$ of finite representation type. By the proof of Lemma~\ref{lem:submorita_modd}, we have a quasi-equivalence $\mathscr{P}_\Gamma\to \demdg{\Lambda}$. Hence $\mathscr{A}$ is quasi-equivalent to $\demdg{\Lambda}$.
\end{proof}

Finally, we give a quiver description of Auslander $d$-truncated DG-algebras by using the Auslander--Reiten quivers of $d$-extended module categories of $d$-truncated proper connective DG-algebras of finite representation type (see \cite{moc2025b}). For simplicity, we assume that the base field $k$ is algebraically closed.

The following proposition follows from the well-known claim that for any proper connective DG-algebra $\Lambda$ with global dimension $2$ and whose $H^0(\Lambda)$ is given by a quiver with relations contained in the radical square, $\Lambda$ is quasi-isomorphic to the path DG-algebra of the quiver with differential defined by the relations (see \cite{MR2795754} for details). 
\begin{proposition}
Let $\Lambda$ be a proper $d$-truncated DG-algebra of finite representation type and $M$ be a basic additive generator of $\dem{\Lambda}$. Denote the Auslander--Reiten quiver of $\dem{\Lambda}$ by $\mathrm{AR}(\dem{\Lambda})$. Define a DG-quiver $Q_\Lambda$ by:
\begin{itemize}
\item The set of vertices $(Q_\Lambda)_0$ is given by $\mathrm{AR}(\dem{\Lambda})_0$.
\item The degree $0$ arrows between vertices are the same as those in $\mathrm{AR}(\dem{\Lambda})$.
\item If there exists an Auslander--Reiten conflation 
$$X\xrightarrow{[f_1,\cdots,f_j]^\mathrm{t}} Y_{1}\oplus\cdots \oplus Y_{j}\xrightarrow{[g_1,\cdots, g_j]} Z\dashrightarrow$$
where $X,Y_i,Z\in\dem{\Lambda}$ are indecomposable objects, then we add a degree $-1$ arrow from $\tau_{X,Z}\colon X\to Z$. We define the differential as follows:
$$d(\tau_{X,Z}):=\sum_{1\leq i\leq j} g_i\circ f_i.$$
\end{itemize}
Then the Auslander $d$-truncated DG-algebra $\Gamma:=\End_\Lambda(M)$ is quasi-isomorphic to the DG-path algebra $k Q_\Lambda$. 
\end{proposition}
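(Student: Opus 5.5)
The plan is to invoke the cited result that a proper connective DG-algebra of global dimension at most $2$ is determined, up to quasi-isomorphism, by a quiver with relations for $H^0$ when the minimal relations lie in the square of the radical (\cite{MR2795754}). The DG-path algebra then has degree $0$ arrows given by the arrows of the quiver, degree $-1$ arrows given by the minimal relations, and differential sending each degree $-1$ arrow to its relation. Since $\Gamma=\End_\Lambda(M)$ has $\gldim\Gamma\leq 2$ by Corollary~\ref{cor:gldim2}, the work is to identify the Gabriel quiver of $H^0(\Gamma)$ together with its minimal relations. I will argue via minimal projective resolutions of the simple objects $S_N$ in $\dem{\Gamma}$.

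\textbf{Step 1: the vertices.} By Lemma~\ref{lem:simple_top}, the simple $H^0(\Gamma)$-modules are exactly the $S_N$ with $N$ running over the indecomposable summands of $M$. Hence the vertices of the quiver are those of $\mathrm{AR}(\dem{\Lambda})_0$.

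\textbf{Step 2: the degree $0$ arrows.} For an indecomposable $N$, the projective cover of $\rad F_M(N)$ is $F_M$ applied to the sink morphism $g\colon N'\to N$. For non-projective $N$ this is Proposition~\ref{prop:proj_resol_of_simple}; for projective $N$ it is the sink morphism $\rad P\to P$ combined with the isomorphism $\rad F_M(P)\cong F_M(\rad P)$. By Lemma~\ref{lem:sink_chara}, the first syzygy is therefore $\bigoplus_i F_M(Y_i)$, where the $Y_i$ are the indecomposable summands of the middle term. This shows that the degree $0$ arrows of the quiver of $H^0(\Gamma)$ are the irreducible morphisms, i.e.\ the arrows of the AR quiver. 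Since $k$ is algebraically closed, arrow multiplicities agree with the dimension of irreducible maps.

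\textbf{Step 3: the degree $-1$ arrows and the differential.} The second syzygy is $F_M(L)=F_M(\tau N)$, coming from the Auslander--Reiten conflation, and it vanishes for projective $N$ by Proposition~\ref{prop:simple_pd}. Since $L$ is indecomposable, the minimal relations ending at $S_N$ consist of a single relation from $L=X$ to $Z=N$, namely the component $\sum_i g_i\circ f_i$. This element lies in the square of the radical because each $f_i,g_i$ is radical. This matches the degree $-1$ arrow $\tau_{X,Z}$ and its differential.

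\textbf{Step 4: applying the cited result.} The main obstacle is checking that the hypotheses of \cite{MR2795754} hold in the DG setting. We need $\Gamma$ connective and proper, which holds because $\dem{\Lambda}$ is hom-finite. We need $\gldim\Gamma\leq 2$ in the sense of Definition~\ref{dfn:global dimension}, with $\pvd\Gamma(S,S'[2])$ counting the minimal relations between vertices. That identification follows from the minimal resolutions above, using Proposition~\ref{prop:ext} to read off $\BE^2(S_N,S_X)$. Finally, $H^{-1}$ and the higher cohomology of $\Gamma$ must be generated as dictated by the resolutions. I expect this to follow from comparing minimal models, i.e.\ the $A_\infty$-Koszul dual, as in \cite{MR2795754}. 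Granting that, the quasi-isomorphism $kQ_\Lambda\simeq\Gamma$ follows.
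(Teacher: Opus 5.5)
Your proposal takes essentially the same route as the paper. The paper deduces the statement directly from Keller's result on proper connective DG-algebras of global dimension at most $2$, using Corollary~\ref{cor:gldim2}, with the quiver and relations of $H^0(\Gamma)$ read off from the Auslander--Reiten conflations; your Steps 1--3 spell out that identification via Proposition~\ref{prop:proj_resol_of_simple}, which the paper leaves implicit, and counting the degree $-1$ arrows by $\pvd\Gamma(S,S'[2])$ rather than only by relations of $H^0(\Gamma)$ is the correct invariant in the DG setting.
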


\begin{example}
Let $\Lambda$ be the path algebra given by the following quiver:
$$\begin{tikzcd}
1 \arrow[r] & 2
\end{tikzcd}$$
and we regard it as a $2$-truncated proper connective DG-algebra concentrated in degree $0$. Then the Auslander--Reiten quiver $\mathrm{AR}(\mathcal{H}^2_\Lambda)$ of $\mathcal{H}^2_\Lambda$ is given by the quiver in left side in the following:
{\footnotesize$$\begin{tikzcd}
&{\footnotesize
\begin{matrix}
1\\2
\end{matrix}
}
&&
{\color{red}\footnotesize
\begin{matrix}
2
\end{matrix}
}
&&
{\color{red}\footnotesize
\begin{matrix}
1
\end{matrix}
}\\
{\footnotesize
\begin{matrix}
2
\end{matrix}}
&&
{\footnotesize
\begin{matrix}
1
\end{matrix}}
&&
{\color{red}\footnotesize
\begin{matrix}
1\\2
\end{matrix}
}
\Ar{2-1}{1-2}{}
\Ar{1-2}{2-3}{}
\Ar{2-3}{1-4}{}
\Ar{1-4}{2-5}{}
\Ar{2-5}{1-6}{}
\tauAr{2-3}{2-1}{}
\tauAr{2-5}{2-3}{}
\tauAr{1-4}{1-2}{}
\tauAr{1-6}{1-4}{}
\end{tikzcd}
\quad \quad
\begin{tikzcd}[row sep=30pt]
& 2 && 4 && 6 \\
1 && 3 && 5
\Ar{2-1}{1-2}{"a"}
\Ar{1-2}{2-3}{"b"}
\Ar{2-3}{1-4}{"c"}
\Ar{1-4}{2-5}{"d"}
\Ar{2-5}{1-6}{"e"}
\Ar{2-1}{2-3}{"h_2",red}
\Ar{1-2}{1-4}{"h_1",red}
\Ar{2-3}{2-5}{"h_4",red}
\Ar{1-4}{1-6}{"h_3",red}
\end{tikzcd}
$$}
Thus, the Auslander $2$-truncated proper connective DG-algebra $\Gamma:=\End_\Lambda(M)$, where $M$ is a basic additive generator of $\mathcal{H}^2_\Lambda$, is quasi-isomorphic to the DG-quiver represented on the right-hand side above. The differentials are defined as mesh relations. The Auslander--Reiten quiver $\mathrm{AR}(\mathcal{H}^2_\Gamma)$ of $\mathcal{H}^2_\Gamma$ is given in Figure~\ref{fig:ARquiver_mod2Gamma} (Next page).
Here, the red numbers present the vector dimensions of cohomologies of degree $-1$. 

Using the Auslander--Reiten quiver, one can check that the extriangulated category $\mathcal{H}^2_\Gamma$ has global dimension $3$ and dominant dimension $3$. 
\end{example}

\begin{landscape}
\begin{midpage}

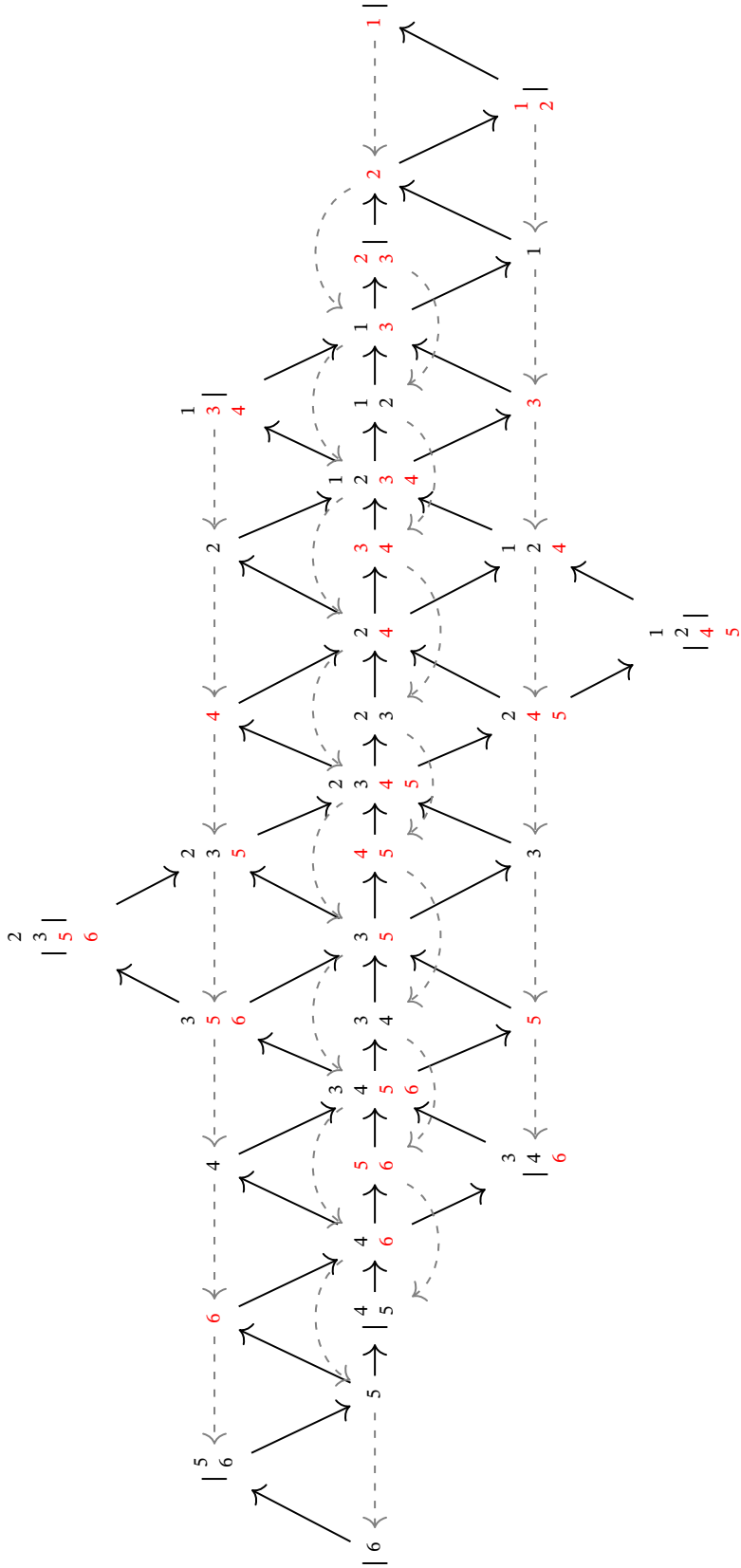
\begin{figure}[htbp]
\centering

$$
\begin{tikzcd}[column sep=small]
&&&&&&&&
{\scriptsize\mid
\begin{matrix}
2\\3\\\red 5\\\red 6
\end{matrix}
\mid}
\\
&
{\scriptsize\mid
\begin{matrix}
5\\6
\end{matrix}
}
&&
{\scriptsize
\begin{matrix}
\red 6
\end{matrix}
}
&&
{\scriptsize
\begin{matrix}
4
\end{matrix}
}
&&
{\scriptsize
\begin{matrix}
3\\\red 5\\\red 6
\end{matrix}
}
&&
{\scriptsize
\begin{matrix}
2\\3\\\red 5
\end{matrix}
}
&&
{\scriptsize
\begin{matrix}
\red 4
\end{matrix}
}
&&
{\scriptsize
\begin{matrix}
2
\end{matrix}
}
&&
{\scriptsize
\begin{matrix}
1\\\red 3\\\red 4
\end{matrix}\mid
}\\
{\scriptsize\mid
\begin{matrix}
6
\end{matrix}
}
&&
{\scriptsize
\begin{matrix}
5
\end{matrix}
}
&
{\scriptsize\mid
\begin{matrix}
4\\5
\end{matrix}
}
&
{\scriptsize
\begin{matrix}
4\\\red 6
\end{matrix}
}
&
{\scriptsize
\begin{matrix}
\red 5\\\red 6
\end{matrix}
}
&
{\scriptsize
\begin{matrix}
3\\4\\\red5\\\red6
\end{matrix}
}
&
{\scriptsize
\begin{matrix}
3\\4
\end{matrix}
}
&
{\scriptsize
\begin{matrix}
3\\\red5
\end{matrix}
}
&
{\scriptsize
\begin{matrix}
\red4\\\red5
\end{matrix}
}
&
{\scriptsize
\begin{matrix}
2\\3\\\red4\\\red5
\end{matrix}
}
&
{\scriptsize
\begin{matrix}
2\\3
\end{matrix}
}
&
{\scriptsize
\begin{matrix}
2\\\red4
\end{matrix}
}
&
{\scriptsize
\begin{matrix}
\red3\\\red4
\end{matrix}
}
&
{\scriptsize
\begin{matrix}
1\\2\\\red3\\\red4
\end{matrix}
}
&
{\scriptsize
\begin{matrix}
1\\2
\end{matrix}
}
&
{\scriptsize
\begin{matrix}
1\\\red3
\end{matrix}
}
&
{\scriptsize
\begin{matrix}
\red2\\\red3
\end{matrix}\mid
}
&
{\scriptsize
\begin{matrix}
\red2
\end{matrix}
}
&&
{\scriptsize
\begin{matrix}
\red1
\end{matrix}\mid
}
\\
&&&&&
{\scriptsize\mid
\begin{matrix}
3\\4\\\red6
\end{matrix}
}
&&
{\scriptsize
\begin{matrix}
\red5
\end{matrix}
}
&&
{\scriptsize
\begin{matrix}
3
\end{matrix}
}
&&
{\scriptsize
\begin{matrix}
2\\\red4\\\red5
\end{matrix}
}
&
&
{\scriptsize
\begin{matrix}
1\\2\\\red4
\end{matrix}
}
&&
{\scriptsize
\begin{matrix}
\red3
\end{matrix}
}
&&
{\scriptsize
\begin{matrix}
1
\end{matrix}
}
&&
{\scriptsize
\begin{matrix}
\red1\\\red2
\end{matrix}\mid
}\\
&&&&&&&&&&&&
{\scriptsize\mid
\begin{matrix}
1\\2\\\red4\\\red5
\end{matrix}\mid
}
\Ar{2-8}{1-9}{}
\Ar{1-9}{2-10}{}
\Ar{3-1}{2-2}{}
\Ar{2-2}{3-3}{}
\Ar{3-3}{2-4}{}
\Ar{2-4}{3-5}{}
\Ar{3-5}{2-6}{}
\Ar{2-6}{3-7}{}
\Ar{3-7}{2-8}{}
\Ar{2-8}{3-9}{}
\Ar{3-9}{2-10}{}
\Ar{2-10}{3-11}{}
\Ar{3-11}{2-12}{}
\Ar{2-12}{3-13}{}
\Ar{3-13}{2-14}{}
\Ar{2-14}{3-15}{}
\Ar{3-15}{2-16}{}
\Ar{2-16}{3-17}{}
\Ar{3-3}{3-4}{}
\Ar{3-4}{3-5}{}
\Ar{3-5}{3-6}{}
\Ar{3-6}{3-7}{}
\Ar{3-7}{3-8}{}
\Ar{3-8}{3-9}{}
\Ar{3-9}{3-10}{}
\Ar{3-10}{3-11}{}
\Ar{3-11}{3-12}{}
\Ar{3-12}{3-13}{}
\Ar{3-13}{3-14}{}
\Ar{3-14}{3-15}{}
\Ar{3-15}{3-16}{}
\Ar{3-16}{3-17}{}
\Ar{3-17}{3-18}{}
\Ar{3-18}{3-19}{}
\Ar{3-5}{4-6}{}
\Ar{4-6}{3-7}{}
\Ar{3-7}{4-8}{}
\Ar{4-8}{3-9}{}
\Ar{3-9}{4-10}{}
\Ar{4-10}{3-11}{}
\Ar{3-11}{4-12}{}
\Ar{4-12}{3-13}{}
\Ar{3-13}{4-14}{}
\Ar{4-14}{3-15}{}
\Ar{3-15}{4-16}{}
\Ar{4-16}{3-17}{}
\Ar{3-17}{4-18}{}
\Ar{4-18}{3-19}{}
\Ar{3-19}{4-20}{}
\Ar{4-20}{3-21}{}
\Ar{4-12}{5-13}{}
\Ar{5-13}{4-14}{}
\tauAr{2-4}{2-2}{}
\tauAr{2-6}{2-4}{}
\tauAr{2-8}{2-6}{}
\tauAr{2-10}{2-8}{}
\tauAr{2-12}{2-10}{}
\tauAr{2-14}{2-12}{}
\tauAr{2-16}{2-14}{}
\tauAr{3-21}{3-19}{}
\tauAr{3-19}{3-17}{bend right=60}
\tauAr{3-18}{3-16}{bend left=60}
\tauAr{3-17}{3-15}{bend right=60}
\tauAr{3-16}{3-14}{bend left=60}
\tauAr{3-15}{3-13}{bend right=60}
\tauAr{3-14}{3-12}{bend left=60}
\tauAr{3-13}{3-11}{bend right=60}
\tauAr{3-12}{3-10}{bend left=60}
\tauAr{3-11}{3-9}{bend right=60}
\tauAr{3-10}{3-8}{bend left=60}
\tauAr{3-9}{3-7}{bend right=60}
\tauAr{3-8}{3-6}{bend left=60}
\tauAr{3-7}{3-5}{bend right=60}
\tauAr{3-6}{3-4}{bend left=60}
\tauAr{3-5}{3-3}{bend right=60}
\tauAr{3-3}{3-1}{}
\tauAr{4-8}{4-6}{}
\tauAr{4-10}{4-8}{}
\tauAr{4-12}{4-10}{}
\tauAr{4-14}{4-12}{}
\tauAr{4-16}{4-14}{}
\tauAr{4-18}{4-16}{}
\tauAr{4-20}{4-18}{}
\end{tikzcd}
$$
\captionsetup{width=.8\textwidth}
\caption{The Auslander--Reiten quiver of $\mathcal{H}^2_{\Gamma}$ where $\Gamma$ is the Auslander $2$-truncated DG-algebra associated to $\Lambda$ in the previous example. The red numbers present the vector dimension of cohomologies of degree $-1$. }
\label{fig:ARquiver_mod2Gamma}
\end{figure}

\end{midpage}

\end{landscape}

\bibliographystyle{mybstwithlabels}
\bibliography{myrefs}

@article{kel1994,
  author={Keller, Bernhard},
  title={Deriving {DG} categories},
  journal={Annales scientifiques de l'Ecole normale sup{\'e}rieure},
  volume={27},
  number={1},
  pages={63--102},
  year={1994}
}

@article{che2023,
  title={On exact dg categories},
  author={Chen, Xiaofa},
  journal={arXiv preprint arXiv:2306.08231},
  year={2023}
}

@article{nak2019,
  title={Extriangulated categories, {H}ovey twin cotorsion pairs and model structures},
  author={Nakaoka, Hiroyuki and Palu, Yann},
  journal={Cah. Topol. G{\'e}om. Diff{\'e}r. Cat{\'e}g},
  volume={60},
  number={2},
  pages={117--193},
  year={2019}
}

@article{iya2024,
  title={Auslander--{R}eiten theory in extriangulated categories},
  author={Iyama, Osamu and Nakaoka, Hiroyuki and Palu, Yann},
  journal={Transactions of the American Mathematical Society, Series B},
  volume={11},
  number={08},
  pages={248--305},
  year={2024}
}

@book{bel2007,
  title={Homological and homotopical aspects of torsion theories},
  author={Beligiannis, Apostolos and Reiten, Idun},
  year={2007},
  publisher={American Mathematical Soc.}
}

@article{che2024,
  title={Exact dg categories {I}: Foundations},
  author={Chen, Xiaofa},
  journal={arXiv preprint arXiv:2402.10694},
  year={2024}
}

@article{che2024b,
  title={Exact dg categories II: The embedding theorem},
  author={Chen, Xiaofa},
  journal={arXiv preprint arXiv:2406.11226},
  year={2024}
}

@article{gor2021,
  title={Positive and negative extensions in extriangulated categories},
  author={Gorsky, Mikhail and Nakaoka, Hiroyuki and Palu, Yann},
  journal={arXiv preprint arXiv:2103.12482},
  year={2021}
}

@article{kel2006,
  title={On differential graded categories},
  author={Keller, Bernhard},
  journal={arXiv preprint arXiv:math/0601185},
  year={2006}
}

@article{gup2024,
  title={$ d $-term silting objects, torsion classes, and cotorsion classes},
  author={Gupta, Esha},
  journal={arXiv preprint arXiv:2407.10562},
  year={2024}
}

@article{zho2024,
  title={Tilting theory for extended module categories},
  author={Zhou, Yu},
  journal={arXiv preprint arXiv:2411.15473},
  year={2024}
}

@article{ada2014,
  title={$\tau$-tilting theory},
  author={Adachi, Takahide and Iyama, Osamu and Reiten, Idun},
  journal={Compositio Mathematica},
  volume={150},
  number={3},
  pages={415--452},
  year={2014},
  publisher={London Mathematical Society}
}

@article{ste2023,
  title={Derived $\infty $-categories as exact completions},
  author={Stefanich, Germ{\'a}n},
  journal={arXiv preprint arXiv:2310.12925},
  year={2023}
}

@article{gor2023,
  title={Hereditary extriangulated categories: Silting objects, mutation, negative extensions},
  author={Mikhail Gorsky and Hiroyuki Nakaoka and Yann Palu},
  journal={arXiv preprint arXiv:2303.07134},
  year={2023}
}

@article{moc2025,
  title={Higher-dimensional generalization of abelian categories via {DG}-categories},
  author={Nao Mochizuki},
  journal={arXiv preprint arXiv:2501.06955},
  year={2025}
}

@article{moc2025b,
  title={On the {A}uslander--{R}eiten {T}heory for {E}xtended {H}earts of {P}roper {C}onnective {DG} {A}lgebras},
  author={Nao Mochizuki and Marvin Plogmann},
  journal={arXiv preprint arXiv:2505.16560},
  year={2025}
}

@article {MR3898985,
    AUTHOR = {Adachi, Takahide and Mizuno, Yuya and Yang, Dong},
     TITLE = {Discreteness of silting objects and {$t$}-structures in
              triangulated categories},
   JOURNAL = {Proc. Lond. Math. Soc. (3)},
  FJOURNAL = {Proceedings of the London Mathematical Society. Third Series},
    VOLUME = {118},
      YEAR = {2019},
    NUMBER = {1},
     PAGES = {1--42},
      ISSN = {0024-6115},
   MRCLASS = {16E35 (16E45 18E30)},
  MRNUMBER = {3898985},
MRREVIEWER = {Jie Zhang},
       DOI = {10.1112/plms.12176},
       URL = {https://doi.org/10.1112/plms.12176},
}

@article{tom2025,
  title={On silting mutations preserving global dimension},
  author={Ryu Tomonaga},
  journal={arXiv preprint arXiv:2510.26206},
  year={2025}
}

@article{min2021,
  title={Resolutions and homological dimensions of {DG}-modules},
  author={Minamoto, Hiroyuki},
  journal={Israel Journal of Mathematics},
  volume={245},
  number={1},
  pages={409--454},
  year={2021},
  publisher={Springer}
}

@article{aus1971,
  author={Auslander, Maurice},
  title={Representation dimension of artin algebras},
  journal={Queen Mary College Mathematics Notes},
  year={1971}
}

@article {MR2298820,
    AUTHOR = {Iyama, Osamu},
     TITLE = {Auslander correspondence},
   JOURNAL = {Adv. Math.},
  FJOURNAL = {Advances in Mathematics},
    VOLUME = {210},
      YEAR = {2007},
    NUMBER = {1},
     PAGES = {51--82},
      ISSN = {0001-8708},
   MRCLASS = {16G10 (16E10)},
  MRNUMBER = {2298820},
MRREVIEWER = {Changchang Xi},
       DOI = {10.1016/j.aim.2006.06.003},
       URL = {https://doi.org/10.1016/j.aim.2006.06.003},
}

@article {MR2927802,
    AUTHOR = {Aihara, Takuma and Iyama, Osamu},
     TITLE = {Silting mutation in triangulated categories},
   JOURNAL = {J. Lond. Math. Soc. (2)},
  FJOURNAL = {Journal of the London Mathematical Society. Second Series},
    VOLUME = {85},
      YEAR = {2012},
    NUMBER = {3},
     PAGES = {633--668},
      ISSN = {0024-6107},
   MRCLASS = {18E30 (16E35)},
  MRNUMBER = {2927802},
MRREVIEWER = {Andrei Marcus},
       DOI = {10.1112/jlms/jdr055},
       URL = {https://doi.org/10.1112/jlms/jdr055},
}

@article {MR4133519,
    AUTHOR = {Jin, Haibo},
     TITLE = {Cohen-{M}acaulay differential graded modules and negative
              {C}alabi-{Y}au configurations},
   JOURNAL = {Adv. Math.},
  FJOURNAL = {Advances in Mathematics},
    VOLUME = {374},
      YEAR = {2020},
     PAGES = {107338, 59},
      ISSN = {0001-8708},
   MRCLASS = {16G50 (16D50 16E45 18G80)},
  MRNUMBER = {4133519},
MRREVIEWER = {Xue Feng Mao},
       DOI = {10.1016/j.aim.2020.107338},
       URL = {https://doi.org/10.1016/j.aim.2020.107338},
}

@article{che2023c,
  title={Auslander--{I}yama correspondence for exact dg categories},
  author={Xiaofa Chen},
  journal={arXiv preprint arXiv:2308.08519},
  year={2023}
}

@article{gup2025,
  title={Semibricks and wide subcategories in extended module categories},
  author={Esha Gupta and Yu Zhou},
  journal={arXiv preprint arXiv:2511.08157},
  year={2025}
}

@article {MR4139031,
    AUTHOR = {Asai, Sota},
     TITLE = {Semibricks},
   JOURNAL = {Int. Math. Res. Not. IMRN},
  FJOURNAL = {International Mathematics Research Notices. IMRN},
      YEAR = {2020},
    NUMBER = {16},
     PAGES = {4993--5054},
      ISSN = {1073-7928,1687-0247},
   MRCLASS = {16D60 (16D80 16G99)},
  MRNUMBER = {4139031},
MRREVIEWER = {Alireza\ Nasr-Isfahani},
       DOI = {10.1093/imrn/rny150},
       URL = {https://doi.org/10.1093/imrn/rny150},
}

@article{run2026,
  title={A note on the $m$-extended module categories of {N}akayama algebras},
  author={Endre Sørmo Rundsveen},
  journal={arXiv preprint arXiv:2601.14843},
  year={2026}
}

@article {MR2795754,
    AUTHOR = {Keller, Bernhard},
     TITLE = {Deformed {C}alabi-{Y}au completions},
      NOTE = {With an appendix by Michel Van den Bergh},
   JOURNAL = {J. Reine Angew. Math.},
  FJOURNAL = {Journal f\"{u}r die Reine und Angewandte Mathematik. [Crelle's
              Journal]},
    VOLUME = {654},
      YEAR = {2011},
     PAGES = {125--180},
      ISSN = {0075-4102,1435-5345},
   MRCLASS = {18E30 (13F60 16E35 16E45 18E35 18G10)},
  MRNUMBER = {2795754},
MRREVIEWER = {Gregoire\ Dupont},
       DOI = {10.1515/CRELLE.2011.031},
       URL = {https://doi.org/10.1515/CRELLE.2011.031},
}

@article {MR161888,
    AUTHOR = {Tachikawa, Hiroyuki},
     TITLE = {On dominant dimensions of {${\rm QF}$}-3 algebras},
   JOURNAL = {Trans. Amer. Math. Soc.},
  FJOURNAL = {Transactions of the American Mathematical Society},
    VOLUME = {112},
      YEAR = {1964},
     PAGES = {249--266},
      ISSN = {0002-9947,1088-6850},
   MRCLASS = {16.40},
  MRNUMBER = {161888},
MRREVIEWER = {J.\ P.\ Jans},
       DOI = {10.2307/1994293},
       URL = {https://doi.org/10.2307/1994293},
}

@article {MR104718,
    AUTHOR = {Nakayama, Tadasi},
     TITLE = {On algebras with complete homology},
   JOURNAL = {Abh. Math. Sem. Univ. Hamburg},
  FJOURNAL = {Abhandlungen aus dem Mathematischen Seminar der
              Universit\"{a}t Hamburg},
    VOLUME = {22},
      YEAR = {1958},
     PAGES = {300--307},
      ISSN = {0025-5858,1865-8784},
   MRCLASS = {18.00},
  MRNUMBER = {104718},
MRREVIEWER = {D.\ Buchsbaum},
       DOI = {10.1007/BF02941960},
       URL = {https://doi.org/10.1007/BF02941960},
}

\end{document}